\newtheorem{theorem}{Theorem}
\newtheorem{lemma}[theorem]{Lemma}
\newtheorem{proposition}[theorem]{Proposition}
\theoremstyle{definition}
\newtheorem{definition}[theorem]{Definition}
\theoremstyle{remark}
\numberwithin{theorem}{section}
\numberwithin{figure}{section}
\numberwithin{equation}{section}
\begin{document}

\title{Multiple SLEs and Dyson Brownian motion: \\transition density and Green's function}
\bigskip{}
\author[1]{Chongzhi Huang\thanks{huangchzh2001prob@gmail.com}}
\author[1]{Hao Wu\thanks{hao.wu.proba@gmail.com}}
\author[1]{Lu Yang\thanks{luyang\_proba@126.com}}
\affil[1]{Tsinghua University, China}

\date{}

%
%

\global\long\def\unnested{\boldsymbol{\underline{\cap\cap}}}
\global\long\def\dist{\mathrm{dist}}
\global\long\def\SLE{\mathrm{SLE}}
\global\long\def\PPrSLE3{\PP_{\mathrm{rSLE(3)}}}
\global\long\def\PPcSLE3{\PP_{\mathrm{cSLE(3)}}}
\global\long\def\PPIsing{\PP_{\mathrm{Ising}}}
\global\long\def\diam{\mathrm{diam}}
\global\long\def\free{\mathrm{free}}
\global\long\def\edge#1#2{\langle #1,#2 \rangle}
\global\long\def\ns{\mathrm{ns}}
\global\long\def\total{\mathrm{Total}}
\global\long\def\Ising{\mathrm{Ising}}
\global\long\def\thetaf{\vartheta}
\global\long\def\eps{\epsilon}
\global\long\def\doublemap{\tilde{\omega}}
\global\long\def\doubledom{\tilde{\Omega}}
\global\long\def\mQ{\mathbb{Q}_{\mathrm{m}}}
\global\long\def\mP{\mathbb{P}_{\mathrm{m}}}
\global\long\def\mE{\mathbb{E}_{\mathrm{m}}}
\global\long\def\conf{\mathrm{Conf}}
\global\long\def\good{\mathrm{E}}
\global\long\def\totalH{\mathcal{Z}_{\mathrm{Total}}}
\global\long\def\totalD{\mathcal{G}_{\mathrm{Total}}}
\global\long\def\U{\mathbb{U}}
\global\long\def\partiIsing{\mathcal{H}}
\global\long\def\confdis{\mathrm{d}^{(a,2N)}_{\mathrm{conf}}}
\global\long\def\bcint{\mathrm{B}_{\mathrm{int}}}
\global\long\def\bcintone{\mathrm{B}}
\global\long\def\PIsing{\mathbb{P}}
\global\long\def\EIsing{\mathbb{E}}
\global\long\def\Z{\mathbb{Z}}
\global\long\def\T{\mathbb{T}}
\global\long\def\HH{\mathbb{H}}
\global\long\def\LA{\mathcal{A}}
\global\long\def\LB{\mathcal{B}}
\global\long\def\LC{\mathcal{C}}
\global\long\def\LD{\mathcal{D}}
\global\long\def\LF{\mathcal{F}}
\global\long\def\LK{\mathcal{K}}
\global\long\def\LE{\mathcal{E}}
\global\long\def\LG{\mathcal{G}}
\global\long\def\LI{\mathcal{I}}
\global\long\def\LJ{\mathcal{J}}
\global\long\def\LL{\mathcal{L}}
\global\long\def\LM{\mathcal{M}}
\global\long\def\LN{\mathcal{N}}
\global\long\def\LQ{\mathcal{Q}}
\global\long\def\LR{\mathcal{R}}
\global\long\def\LT{\mathcal{T}}
\global\long\def\LS{\mathcal{S}}
\global\long\def\LU{\mathcal{U}}
\global\long\def\LV{\mathcal{V}}
\global\long\def\LW{\mathcal{W}}
\global\long\def\LX{\mathcal{X}}
\global\long\def\LY{\mathcal{Y}}
\global\long\def\PartF{\mathcal{Z}}
\global\long\def\LH{\mathcal{H}}
\global\long\def\R{\mathbb{R}}
\global\long\def\C{\mathbb{C}}
\global\long\def\N{\mathbb{N}}
\global\long\def\Z{\mathbb{Z}}
\global\long\def\E{\mathbb{E}}
\global\long\def\PP{\mathbb{P}}
\global\long\def\QQ{\mathbb{Q}}
\global\long\def\A{\mathbb{A}}
\global\long\def\one{\mathbb{1}}
\global\long\def\chamber{\mathfrak{X}}
\global\long\def\metric{\mathrm{Dist}}
\global\long\def\cond{\,|\,}
\global\long\def\la{\langle}
\global\long\def\ra{\rangle}
\global\long\def\outb{\partial_{\mathrm{o}}}
\global\long\def\intb{\partial_{\mathrm{i}}}
\global\long\def\Riem{\mathbb{C}_{\infty}}
\global\long\def\Im{\operatorname{Im}}
\global\long\def\Re{\operatorname{Re}}

\global\long\def\blm{\mathfrak{m}}
\global\long\def\ud{\mathrm{d}}

\global\long\def\ii{\mathfrak{i}}
\global\long\def\rr{\mathfrak{r}}
\global\long\def\cc{\mathfrak{c}}

\newcommand{\armexp}{A_{2N}}
\newcommand{\cvgexp}{B_{2N}}

\newcommand{\unitD}{\mathbb{D}}
\global\long\def\LP{\mathrm{LP}}
\global\long\def\GFF{\mathrm{GFF}}
\global\long\def\blm{\mathfrak{m}}

\global\long\def\bs{\boldsymbol}

\newcommand{\QQfusion}[1]{\mathbb{Q}_{\includegraphics[scale=0.5]{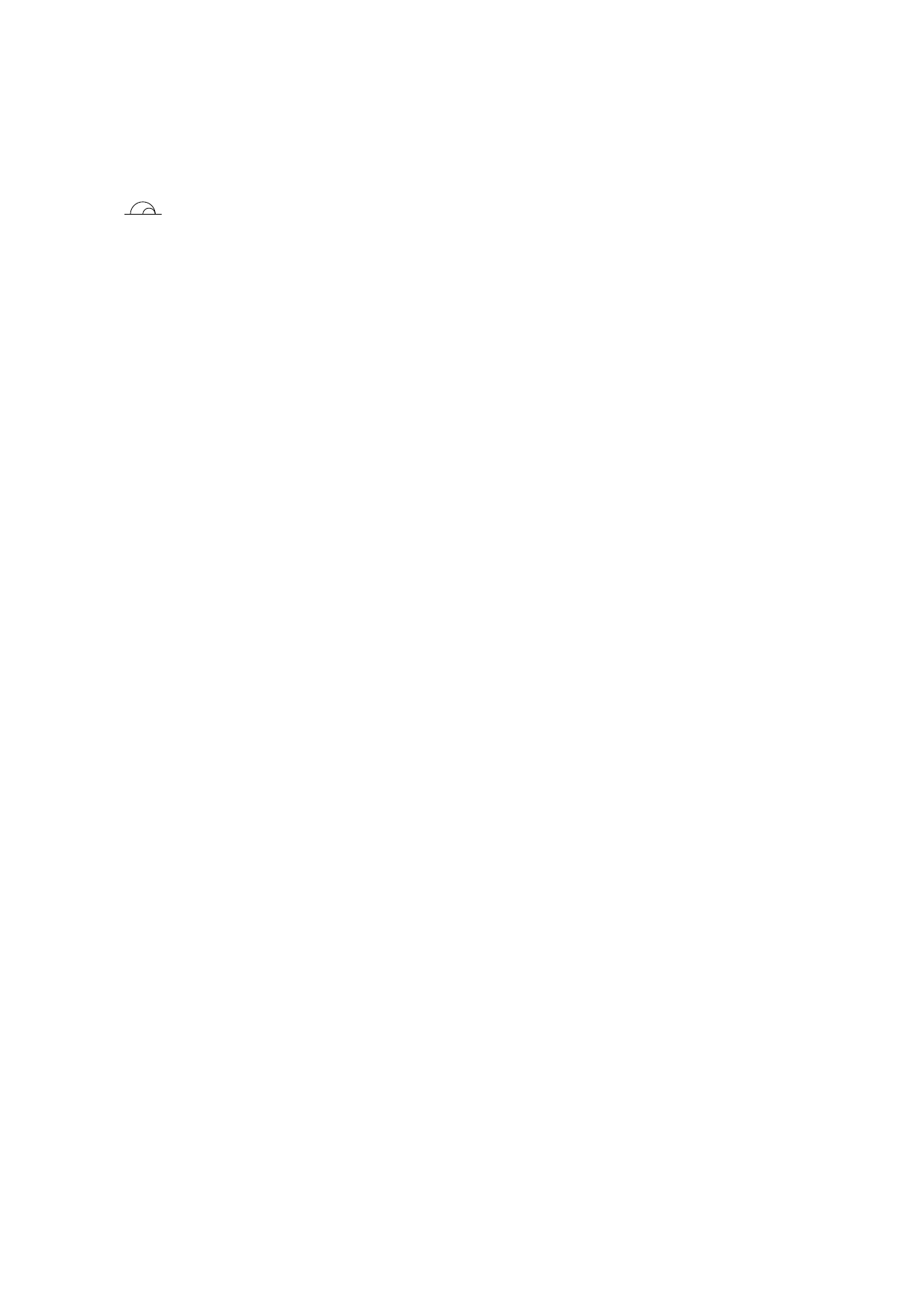}_{#1}}}
\newcommand{\LZfusion}[1]{\mathcal{Z}_{\includegraphics[scale=0.5]{figures/link4fusion}_{#1}}}
\newcommand{\LZ}{\mathcal{Z}}
\maketitle
\vspace{-1cm}
\begin{center}
\begin{minipage}{0.95\textwidth}
\abstract We consider multiple chordal Schramm-Loewner evolution (SLE) with $\kappa\in (0,4]$. Under common-time parameterization, we show that the transition density of the driving function of multiple chordal SLEs can be given by the transition density of Dyson Brownian motion and Green's function. 
This is a companion paper of~\cite{FengWuYangIsing}. 

\noindent\textbf{Keywords:} multiple SLEs, Dyson Brownian motion, transition density, Green's function.\\ 
\noindent\textbf{MSC:} 60J67
\end{minipage}
\end{center}
\tableofcontents

\newpage
\section{Introduction}
In this article, we consider connections between multiple Schramm-Loewner evolution (SLE) and Dyson Brownian motion. 
The motivation to study multiple SLEs is to describe the scaling limit of multiple interfaces in polygons, see for instance~\cite{BauerBernardKytolaMultipleSLE, DubedatCommutationSLE, KytolaPeltolaPurePartitionFunctions}.
The study on its connections with Dyson Brownian motion goes back to~\cite{CardySLEDysonBM} in multiple radial case, and some recent research including~\cite{KatoriKoshidaThreePhasesofMultipleSLSE, ChenMargarintPerturbationsofMultipleSLE, CampbellLuhMargarintCvgMultipleSLE} has also proved properties of multiple SLEs driven by Dyson Brownian motion, which is a local formulation of multiple SLEs defined through multiple Loewner equations. 

There is another global formulation of multiple chordal SLEs defined as commuting Loewner chains that we describe below. 
We say that $(\Omega; x_1, \ldots, x_p)$ is a (topological) $p$-polygon if  $\Omega\subsetneq\C$ is simply connected and $x_1, x_2, \ldots, x_p \in \partial\Omega$ are distinct points lying counterclockwise along the boundary. 
We will also assume throughout that $\partial\Omega$ is locally connected and the marked boundary points $x_1, x_2, \ldots, x_p$ lie on $C^{1+\eps}$-boundary segments, for some $\eps>0$.
We frequently use the reference polygon $(\HH; \bs{x})$ which is the upper-half plane $\HH:=\{z\in\C: \Im{z}>0\}$ with marked boundary points $\bs{x}=(x_1, \ldots, x_p)\in\LX_p$ where
\begin{align*}
\LX_p := \{\bs{x} = (x_1, \ldots, x_p)\in\R^p : x_1<\cdots<x_p\}.
\end{align*}

In this article, we focus on polygons with even number of marked points, i.e. polygons $(\Omega; x_1, \ldots, x_{2n})$ with $p=2n$. We consider $n$ non-crossing simple curves in $\Omega$ such that each curve connects two points among $\{x_1, \ldots, x_{2n}\}$. These curves can have various planar connectivities. We describe such connectivities by link patterns $\alpha=\{\{a_1, b_1\}, \ldots, \{a_n, b_n\}\}$ where $\{a_1, b_1, \ldots, a_n, b_n\}=\{1, 2, \ldots, 2n\}$, and denote by $\LP_n$ the set of all link patterns. Note that $\#\LP_n$ is the Catalan number $C_n=\frac{1}{n+1}\binom{2n}{n}$. See Figure~\ref{fig::6points}. 

\begin{figure}[ht!]
\begin{subfigure}[b]{0.3\textwidth}
\begin{center}
\includegraphics[width=0.8\textwidth]{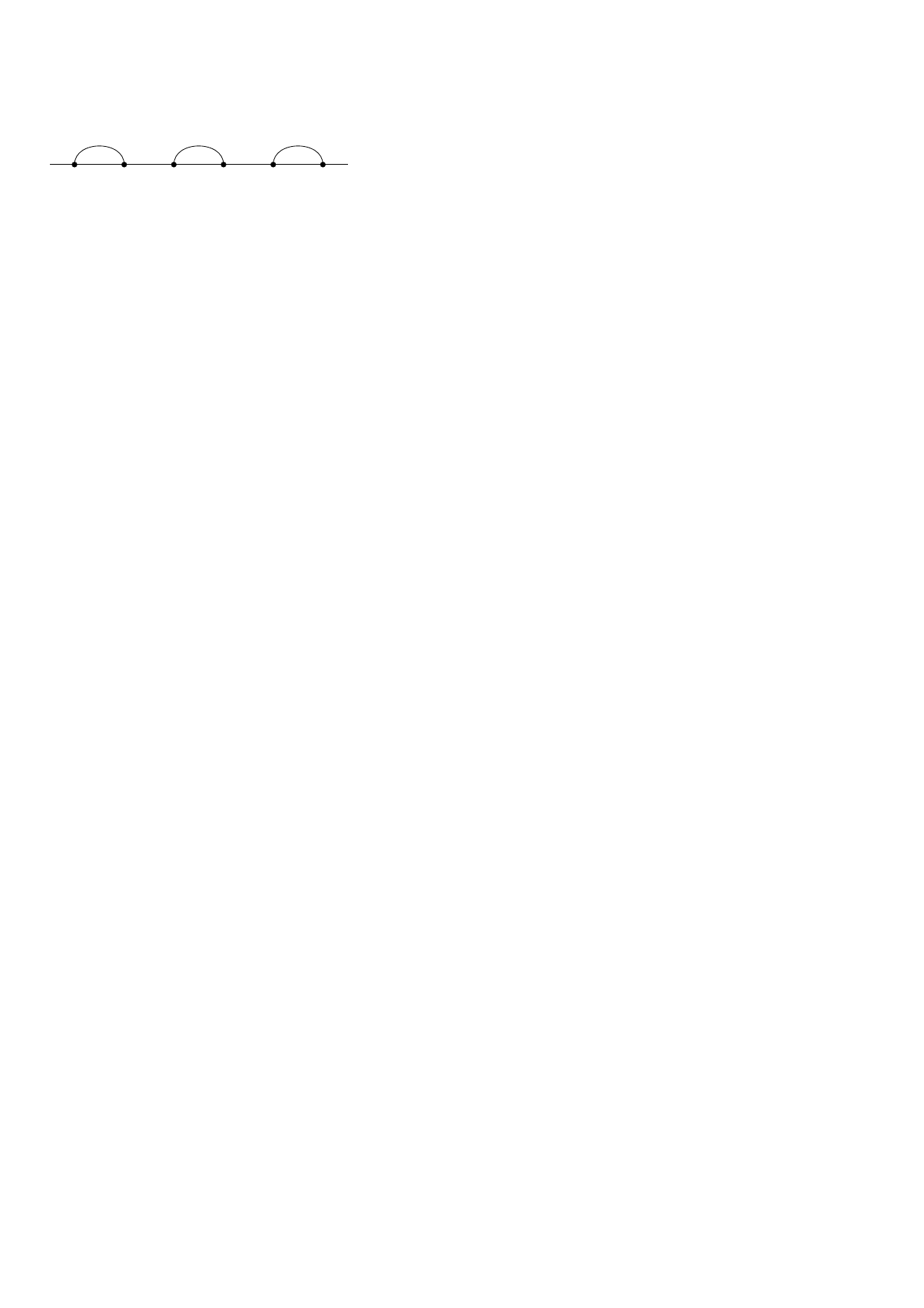}
\end{center}
\caption{$\{\{1,2\}, \{3,4\}, \{5,6\}\}$}
\end{subfigure}
\begin{subfigure}[b]{0.3\textwidth}
\begin{center}
\includegraphics[width=0.8\textwidth]{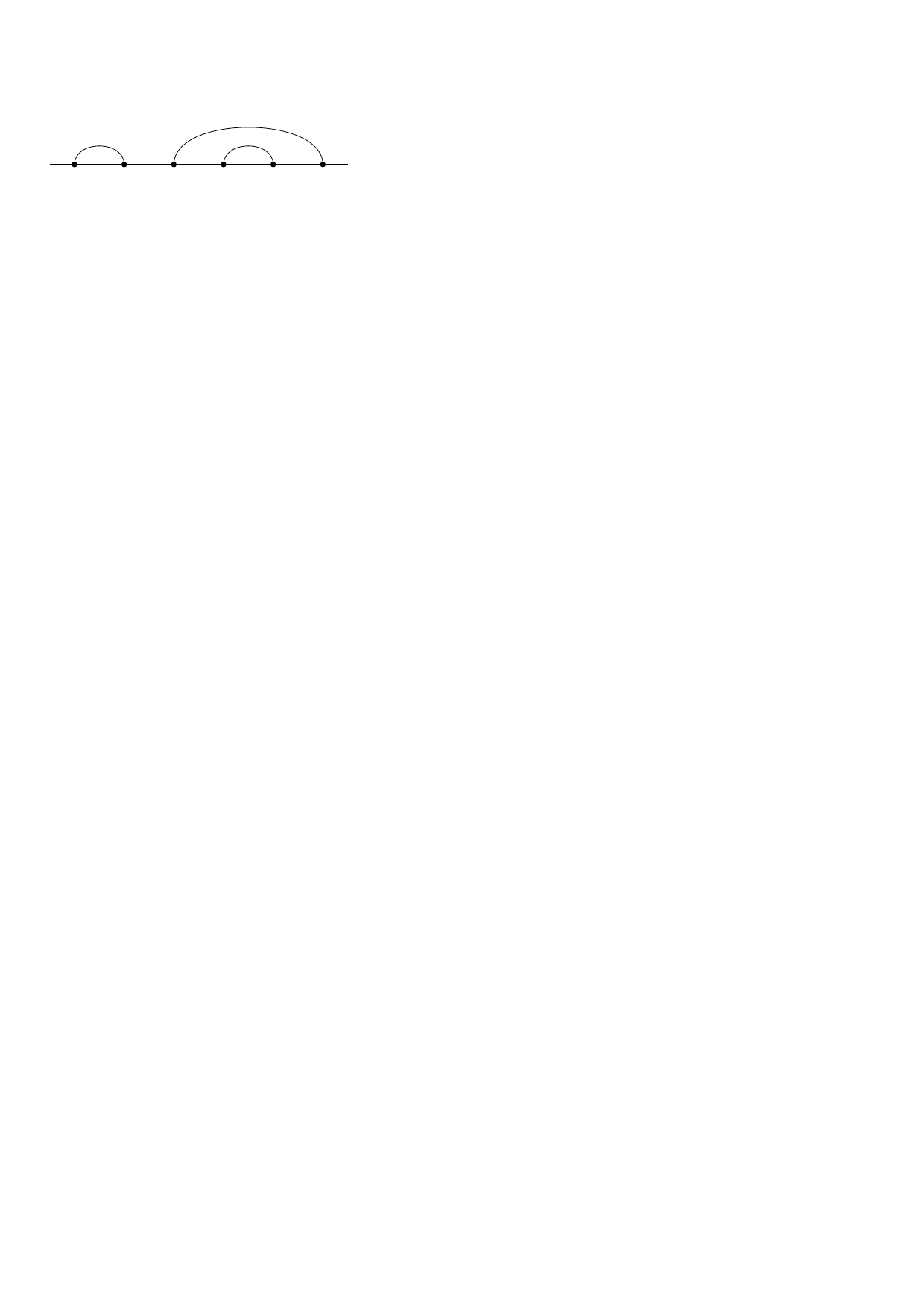}
\end{center}
\caption{$\{\{1,2\}, \{3,6\}, \{4,5\}\}$}
\end{subfigure}
\begin{subfigure}[b]{0.3\textwidth}
\begin{center}
\includegraphics[width=0.8\textwidth]{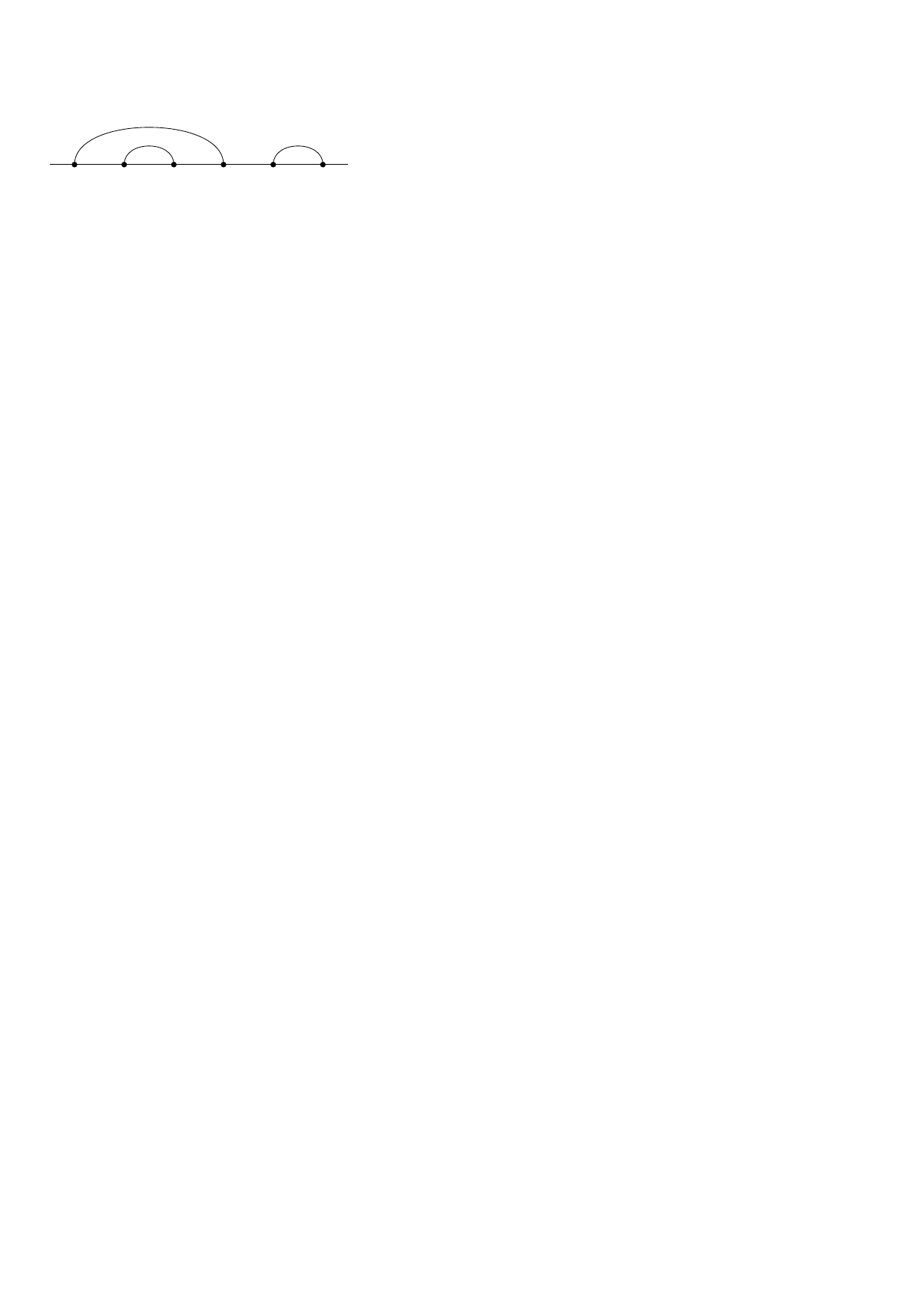}
\end{center}
\caption{$\{\{1,4\}, \{2,3\}, \{5,6\}\}$}
\end{subfigure}\\
\vspace{0.5cm}
\begin{subfigure}[b]{0.3\textwidth}
\begin{center}
\includegraphics[width=0.8\textwidth]{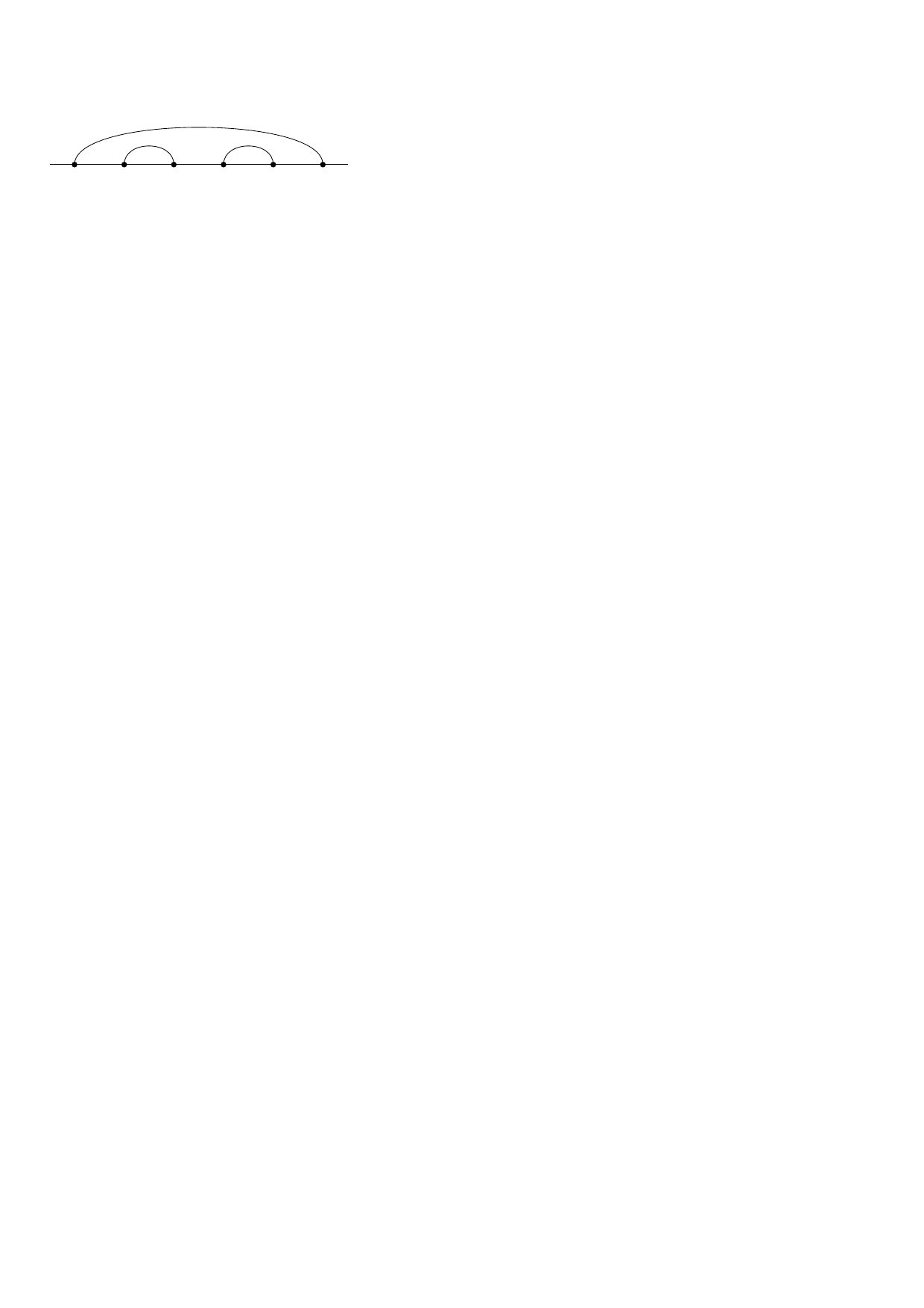}
\end{center}
\caption{$\{\{1,6\}, \{2,3\}, \{4,5\}\}$}
\end{subfigure}
\begin{subfigure}[b]{0.3\textwidth}
\begin{center}
\includegraphics[width=0.8\textwidth]{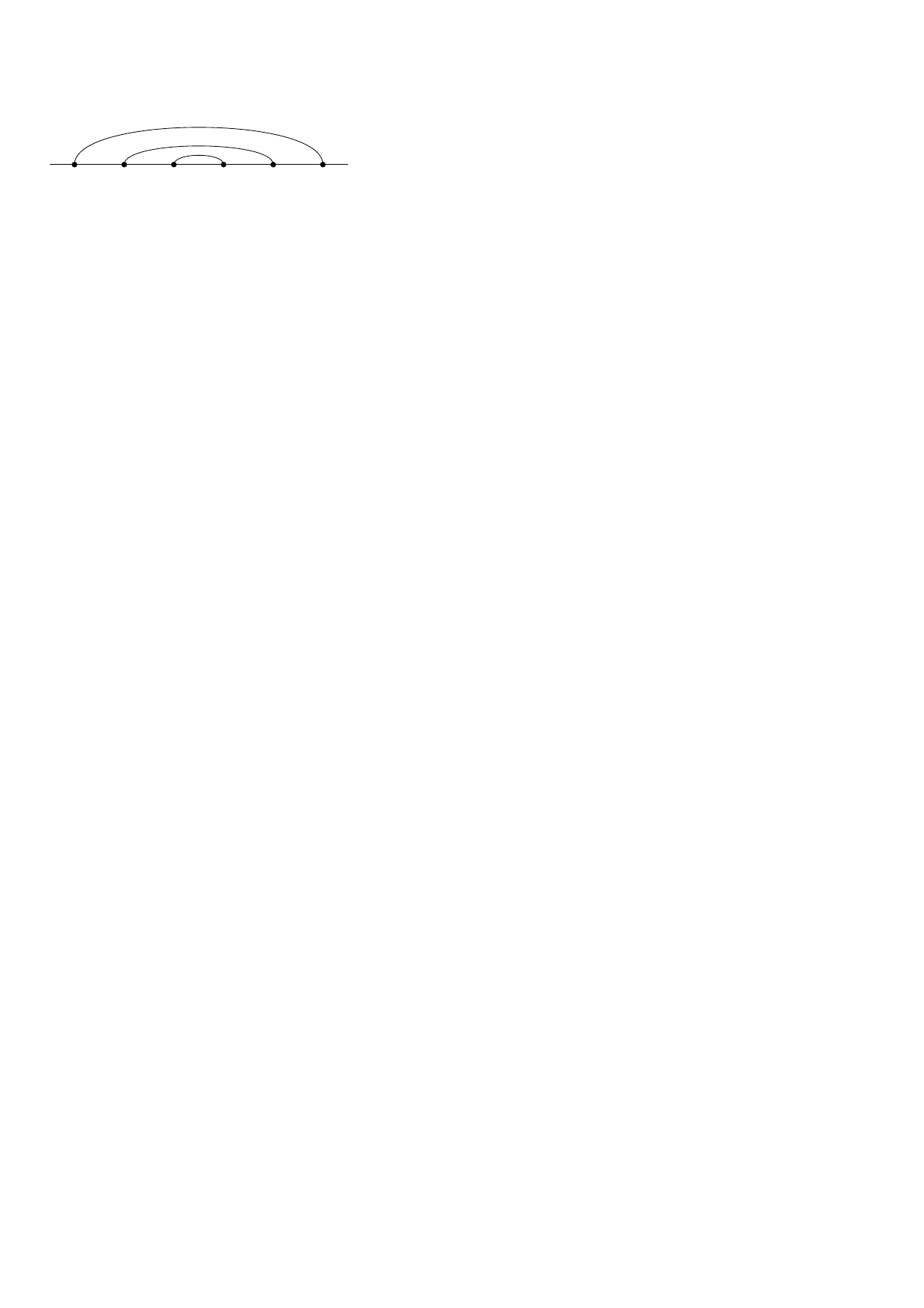}
\end{center}
\caption{$\{\{1,6\}, \{2,5\}, \{3,4\}\}$}
\end{subfigure}
\caption{\label{fig::6points} When $n=3$, there are $C_3=5$ link patterns. 
}
\end{figure}

When $n=1$, we denote by $\chamber(\Omega; x_1, x_2)$ the set of continuous simple unparameterized curves in $\Omega$ connecting $x_1$ and $x_2$ such that they only touch the boundary $\partial\Omega$ in $\{x_1, x_2\}$. Chordal SLE in $(\Omega; x_1, x_2)$ is a random curve in $\chamber(\Omega; x_1, x_2)$ when $\kappa\le 4$.  In general, for $n\ge 2$, we consider multiple SLEs defined via resampling property. Such resampling property is inherited from multiple interfaces for critical lattice model in a polygon. 

\begin{definition}[Multichordal SLE]\label{def::globalnSLE}
Fix $\kappa\in (0,4]$ and $n\ge 2$. Let $(\Omega; \bs{x})=(\Omega; x_1, \ldots, x_{2n})$ be a $2n$-polygon. Fix $\alpha=\{\{a_1, b_1\}, \ldots \{a_n, b_n\}\}\in\LP_n$. Consider curves $\gamma^j\in \chamber(\Omega; x_{a_j}, x_{b_j})$ for each $j$; and suppose $\gamma^i\cap\gamma^j=\emptyset$ for $i\neq j$. We call the law $\QQ_{\alpha}(\Omega; \bs{x})$ of $\bs{\gamma}=(\gamma^1, \ldots, \gamma^n)$ multichordal $\SLE_{\kappa}$ in $(\Omega; \bs{x})$ associated to link pattern $\alpha$ if it satisfies the resampling property: for each $j\in\{1, \ldots, n\}$, the conditional law of $\gamma^j$ given $\{\gamma^i: i\neq j\}$ is chordal $\SLE_{\kappa}$ from $x_{a_j}$ to $x_{b_j}$ in the connected component $\Omega_j$ of $\Omega\setminus\cup_{i\neq j}\gamma^i$ having $x_{a_j}, x_{b_j}$ on its boundary. In particular, when $(\Omega; \bs{x})=(\HH; \bs{x})$,  we denote by $\QQ_{\alpha}(\bs{x})$ the law of multichordal $\SLE_{\kappa}$ in $(\HH; \bs{x})$ associated to link pattern $\alpha$. 
\end{definition}

The existence and uniqueness of multichordal $\SLE_{\kappa}$ for different ranges of $\kappa$ has been widely studied, see~\cite{MillerSheffieldIG1, MillerSheffieldIG2, PeltolaWuGlobalMultipleSLEs, WuHyperSLE, BeffaraPeltolaWuUniqueness, AngHoldenSunYu2023, ZhanExistenceUniquenessMultipleSLE, FengLiuPeltolaWu2024}.  
In particular, the existence and uniqueness holds for all $\kappa\in (0,8)$. In this article, we focus on $\kappa\le 4$. 

In the companion paper~\cite{FengWuYangIsing}, we derived the connection between multichordal SLE and Dyson circular ensemble for $\kappa\in (0,4]$ in radial coordinate. 
In this article, we will prove corresponding conclusions for multichordal SLE and Dyson Brownian motion for $\kappa\in (0,4]$ in chordal coordinate.  
The main conclusions are summarized as follows. 
We parameterize multichordal SLEs by common-time-parameter (see definition in Section~\ref{subsec::commontime_mart}) and derive the transition density of the driving function in Theorem~\ref{thm::transitiondensity}: it can be given by the transition density of Dyson Brownian motion and Green's function. 
Using such connection and the asymptotics of the transition density of Dyson Brownian motion, we derive the asymptotics of a rare event and show that, conditional on this rare event, the driving function of multichordal SLEs converges to Dyson Brownian motion (Proposition~\ref{prop::cvg_DysonBM}). 
In particular, we take $\kappa=4$ as an example and connect our result to Gaussian free field level lines (Proposition~\ref{prop::GFF_DysonBM}). In this case, the SDE for driving function~\eqref{eqn::GFFlevellines_SDE}, and Green's function~\eqref{eqn::Greenfunction_GFF} are all explicit, and the Dyson Brownian motion is of parameter $\beta=2$. 
Finally, we give one application of our main result: we derive the asymptotic of the probability for the area of certain geometric neighborhood of multichordal SLEs to be large (Theorem~\ref{thm::multichordalSLE_boundaryarm}). 

\subsection{Transition density and Green's function}
\label{subsec::intro_transition}
We focus on the $2n$-polygon $(\HH; \bs{x})$ with $\bs{x}=(x_1, \ldots, x_{2n})\in\LX_{2n}$. Recall that $\QQ_{\alpha}(\bs{x})$ denotes the law of multichordal $\SLE_{\kappa}$ in  $(\HH; \bs{x})$ associated to link pattern $\alpha=\{\{a_1, b_1\}, \ldots, \{a_n, b_n\}\}\in\LP_n$ as in Definition~\ref{def::globalnSLE}. Suppose $\bs{\gamma}=(\gamma^1, \ldots, \gamma^n)\sim\QQ_{\alpha}(\bs{x})$ and suppose $\gamma^j$ is from $x_{a_j}$ to $x_{b_j}$ for $1\le j\le n$. We view $\bs{\gamma}$ as a $2n$-tuple of continuous simple curves: for $j\in\{1, \ldots, n\}$, we define $\eta^{a_j}$ to be $\gamma^j$ and $\eta^{b_j}$ to be the time-reversal of $\gamma^j$. In this way, $\eta^j$ is a continuous simple curve in $\HH$ starting from $x_j$ for $j\in\{1, \ldots, 2n\}$. We will introduce common-time-parameter in Section~\ref{subsec::commontime_mart}. Its analogue in radial coordinate was introduced in~\cite{HealeyLawlerNSidedRadialSLE}. 
We will prove in Lemma~\ref{lem::ppf_mart_common} that, under $\QQ_{\alpha}(\bs{x})$, the driving function $\bs{X}_t$ of multichordal $\SLE_{\kappa}$ satisfies the system of SDEs: 
\begin{equation}\label{eqn::SDE_ppf}
	\ud X_t^j=\sqrt{\kappa}\ud B_t^j+\kappa(\partial_j\log\LZ_\alpha)(\bs{X}_t)\ud t+\sum_{\ell\ne j}\frac{2}{X_t^j-X_t^{\ell}}\ud t, \qquad 1\leq j\leq 2n,
\end{equation}
where $\{B^j\}_{1\le j\le 2n}$ are independent standard Brownian motions and $\LZ_{\alpha}$ is the pure partition function for multichordal SLE defined in Section~\ref{subsec::pre_NSLE}. The solution to~\eqref{eqn::SDE_ppf} exists until the lifetime $T=\inf\{t: X_t^j=X_t^k\text{ for some }j\neq k\}$. 
The main goal of this article is to compare the solution~\eqref{eqn::SDE_ppf} with Dyson Brownian motion.

\begin{theorem}\label{thm::transitiondensity}
Fix $\kappa\in (0,4], n\ge 1$ and link pattern $\alpha\in\LP_n$. 
\begin{itemize}
\item Denote by $\mathsf{Q}_{\alpha}(t; \cdot, \cdot)$ the transition density for the solution to~\eqref{eqn::SDE_ppf}.\footnote{By saying that $\mathsf{Q}_{\alpha}(t; \cdot, \cdot)$ is the transition density, we mean that for any $t>0$ and $\bs{x}\in\LX_{2n}$, if $\bs{X}_0=\bs{x}$, then for any bounded measurable function $f$ on $\LX_{2n}$, \[\E_{\alpha}(\bs{x})\left[\one_{\{T>t\}}f(\bs{X}_t)\right]=\int_{\LX_{2n}}\mathsf{Q}_{\alpha}(t; \bs{x}, \bs{y})f(\bs{y})\ud \bs{y}.\]}
\item Denote by $\mathsf{Q}_{\shuffle_{2n}}(t; \cdot, \cdot)$ the transition density for Dyson Brownian motion with parameter $\beta=8/\kappa$: 
\begin{equation}\label{eqn::SDE_halfwatermelon_intro}
\ud X^j_t=\sqrt{\kappa}\ud B_t^j+\sum_{k\neq j}\frac{4}{X_t^j-X_t^k}\ud t, \qquad 1\le j\le 2n, 
\end{equation}
where $\{B^j\}_{1\le j\le 2n}$ are independent standard Brownian motions. 
\end{itemize}
Then we have 
\begin{equation}\label{eqn::transitiondensity_ppf}
\mathsf{Q}_{\alpha}(t; \bs{x}, \bs{y})=\mathsf{Q}_{\shuffle_{2n}}(t; \bs{x}, \bs{y})\frac{G_{\alpha}(\bs{x})}{G_{\alpha}(\bs{y})}, \qquad \text{for all }t\ge 0, \text{and } \bs{x}, \bs{y}\in\LX_{2n},
\end{equation}
where $G_{\alpha}$ is the Green's function defined as 
\begin{equation}\label{eqn::Green}
G_{\alpha}(\bs{x}):=\frac{\LZ_{\shuffle_{2n}}(\bs{x})}{\LZ_{\alpha}(\bs{x})}, \qquad\text{with }\LZ_{\shuffle_{2n}}(\bs x) := \prod_{1\leq i<j\leq 2n}(x_{j}-x_{i})^{\frac{2}{\kappa}},\qquad \text{for }\bs{x}=(x_1, \ldots, x_{2n})\in\LX_{2n}, 
\end{equation}
 and $\LZ_{\alpha}$ is the pure partition function for multichordal SLE defined in Section~\ref{subsec::pre_NSLE}. 
\end{theorem}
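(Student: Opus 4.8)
The plan is to realize the driving-function SDE~\eqref{eqn::SDE_ppf} as a Doob $h$-transform of Dyson Brownian motion~\eqref{eqn::SDE_halfwatermelon_intro}, with $h:=\LZ_{\alpha}/\LZ_{\shuffle_{2n}}=1/G_{\alpha}$, and then to read off~\eqref{eqn::transitiondensity_ppf} from the way transition densities transform under an $h$-transform. The first step is bookkeeping: since $\partial_j\log\LZ_{\shuffle_{2n}}(\bs x)=\tfrac2\kappa\sum_{\ell\neq j}(x_j-x_\ell)^{-1}$, one has $\kappa\,\partial_j\log\LZ_{\shuffle_{2n}}(\bs x)+\sum_{\ell\neq j}\tfrac{2}{x_j-x_\ell}=\sum_{\ell\neq j}\tfrac{4}{x_j-x_\ell}$, so~\eqref{eqn::SDE_halfwatermelon_intro} is exactly~\eqref{eqn::SDE_ppf} with $\LZ_\alpha$ replaced by $\LZ_{\shuffle_{2n}}$; writing $L_\alpha$ and $L_{\shuffle_{2n}}$ for the respective generators, the two drifts differ precisely by $\kappa\nabla\log h$.

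The algebraic heart of the argument is the identity $L_{\shuffle_{2n}}h=0$. I would deduce it from the fact (recalled in Section~\ref{subsec::pre_NSLE}) that $\LZ_\alpha$ is annihilated by the second-order null-vector operators $\mathcal{D}^{(j)}:=\tfrac\kappa2\partial_j^2+\sum_{\ell\neq j}\bigl(\tfrac{2}{x_\ell-x_j}\partial_\ell-\tfrac{2h_\kappa}{(x_\ell-x_j)^2}\bigr)$ with $h_\kappa:=\tfrac{6-\kappa}{2\kappa}$, together with the direct check that the explicit function $\LZ_{\shuffle_{2n}}=\prod_{i<j}(x_j-x_i)^{2/\kappa}$ is annihilated by each $\mathcal{D}^{(j)}$ as well. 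Substituting $\LZ_\alpha=h\,\LZ_{\shuffle_{2n}}$ into $\mathcal{D}^{(j)}\LZ_\alpha=0$ and using $\mathcal{D}^{(j)}\LZ_{\shuffle_{2n}}=0$, the pure $\LZ_{\shuffle_{2n}}$-terms (including the potential terms) cancel, leaving a first-order equation for $h$ for each $j$; summing over $j$, the two symmetric first-order sums coincide after relabelling the summation indices and one is left precisely with $\tfrac\kappa2\sum_j\partial_j^2 h+\sum_j\bigl(\sum_{\ell\neq j}\tfrac{4}{x_j-x_\ell}\bigr)\partial_j h=0$, i.e. $L_{\shuffle_{2n}}h=0$.

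Granting $L_{\shuffle_{2n}}h=0$, the generators are intertwined by $L_\alpha f=\tfrac1h L_{\shuffle_{2n}}(hf)$, and by It\^o's formula the Girsanov density converting Dyson Brownian motion into the solution of~\eqref{eqn::SDE_ppf} is exactly $h(\bs X_t)/h(\bs X_0)$ (the usual $\exp(-\int_0^t (L_{\shuffle_{2n}}h/h)(\bs X_s)\,\ud s)$ correction being $1$). To turn this into a rigorous statement up to the collision time $T$, I would localize: pick compact sets $K_N\uparrow\LX_{2n}$ and let $T_N$ be the first exit time of $\bs X$ from $K_N$, so that $h$ and $\nabla\log h$ are bounded on $K_N$ and $s\mapsto h(\bs X_{s\wedge T_N})/h(\bs X_0)$ is a genuine $\PP_{\shuffle_{2n}}(\bs x)$-martingale; Girsanov then identifies the tilted measure on $\LF_{t\wedge T_N}$ with the law of~\eqref{eqn::SDE_ppf} stopped at $T_N$, hence with $\PP_\alpha(\bs x)$ there by local uniqueness of~\eqref{eqn::SDE_ppf}. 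This gives $\E_\alpha(\bs x)[\one_{\{T_N>t\}}f(\bs X_t)]=\E_{\shuffle_{2n}}(\bs x)[\one_{\{T_N>t\}}f(\bs X_t)\,h(\bs X_t)/h(\bs x)]$ for bounded measurable $f$; sending $N\to\infty$ (monotone convergence on the left using $T_N\uparrow T$; dominated convergence on the right using $\E_{\shuffle_{2n}}(\bs x)[h(\bs X_t)/h(\bs x)]\le1$, together with the fact that Dyson Brownian motion with $\beta=8/\kappa\ge2$ never collides, so $\mathsf{Q}_{\shuffle_{2n}}(t;\bs x,\cdot)$ is an honest density on $\LX_{2n}$) yields
\[
\E_\alpha(\bs x)\bigl[\one_{\{T>t\}}f(\bs X_t)\bigr]=\frac{1}{h(\bs x)}\int_{\LX_{2n}}\mathsf{Q}_{\shuffle_{2n}}(t;\bs x,\bs y)\,h(\bs y)\,f(\bs y)\,\ud\bs y .
\]
Comparing with the defining property of $\mathsf{Q}_\alpha$ and recalling $h=1/G_\alpha$ identifies $\mathsf{Q}_\alpha(t;\bs x,\bs y)=\mathsf{Q}_{\shuffle_{2n}}(t;\bs x,\bs y)\,G_\alpha(\bs x)/G_\alpha(\bs y)$; both sides being continuous in $\bs y$, the equality holds pointwise, which is~\eqref{eqn::transitiondensity_ppf}, and the case $t=0$ is trivial.

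The step I expect to be the main obstacle is the analytic bookkeeping at the boundary of the Weyl chamber $\LX_{2n}$: one must verify that the exponential local martingale is a true martingale up to each $T_N$, that $T_N\uparrow T$ (equivalently, that~\eqref{eqn::SDE_ppf} can leave $\LX_{2n}$ only by collision --- here translation invariance of $\LZ_\alpha$ makes the barycentre of $\bs X$ a Brownian motion, ruling out escape to infinity in finite time), and then justify the passages to the limit. By contrast, the fact that the ``missing mass'' $1-\E_{\shuffle_{2n}}(\bs x)[h(\bs X_t)/h(\bs x)]$ equals $\PP_\alpha(\bs x)[T\le t]$ is automatic from the construction, so no separate analysis of the boundary asymptotics of $\LZ_\alpha$ is required.
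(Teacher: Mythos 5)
Your argument is correct in substance, but it is a genuinely different route from the paper's. The paper never works at the level of the SDEs alone: it realizes both $\QQ_{\alpha}(\bs{x})$ and the half-watermelon law $\QQ_{\shuffle_{2n}}(\bs{x})$ as weightings of the independent-SLE measure $\PP_{2n}$ by the multi-time-parameter martingales $M_{\bs t}(\LZ_{\alpha})$ and $M_{\bs t}(\LZ_{\shuffle_{2n}})$ of Proposition~\ref{prop::multitime_mart} (rewritten in common time in Lemmas~\ref{lem::halfwatermelon_mart_common} and~\ref{lem::ppf_mart_common}), and then takes the ratio of the two Radon--Nikodym derivatives on $\LF_t\cap\{T>t\}$; the conformal-derivative factors $\prod_j g_{t,j}'(W^j_t)^{\mathfrak b}$ and the Brownian-loop-measure term $\exp(\tfrac{\mathfrak c}{2}\blm_t)$ cancel, leaving exactly $G_{\alpha}(\bs{x})/G_{\alpha}(\bs{X}_t)$, and the identification of $\bs X_t$ under $\QQ_{\shuffle_{2n}}$ with Dyson Brownian motion converts this into~\eqref{eqn::transitiondensity_ppf}. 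You instead perform a Doob $h$-transform of Dyson Brownian motion with $h=\LZ_{\alpha}/\LZ_{\shuffle_{2n}}=1/G_{\alpha}$, deriving $L_{\shuffle_{2n}}h=0$ directly from the BPZ system~\eqref{eqn::PDE} together with the (checkable) fact that $\LZ_{\shuffle_{2n}}$ satisfies the same system; your algebra here (per-$j$ first-order relations, then summing and relabelling) is right, and the Girsanov/localization bookkeeping is sound. What each approach buys: yours is more elementary and self-contained (no loop measure, no conformal maps, no appeal to the global SLE resampling constructions), and it isolates the purely PDE/probabilistic mechanism behind~\eqref{eqn::transitiondensity_ppf}; the paper's route, by contrast, simultaneously establishes that the solution of~\eqref{eqn::SDE_ppf} \emph{is} the driving function of the global multichordal SLE (Lemma~\ref{lem::ppf_mart_common}), which is needed elsewhere in the paper and which automatically guarantees that the process lives exactly until the collision time $T$.

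The one soft spot you correctly flag is $T_N\uparrow T$: your barycentre remark only controls $\sum_j X_t^j$ and does not by itself rule out the spread of the coordinates exploding to infinity before a collision, so "exit from $\LX_{2n}$ only by collision" needs a further argument (e.g.\ a comparison/Lyapunov bound using the power-law estimate~\eqref{eqn::PLB_strong} and smoothness of $\LZ_\alpha$, or the paper's identification of the solution with the SLE driving function, under which finiteness up to $T$ is automatic). Since the paper's statement of~\eqref{eqn::SDE_ppf} already posits existence up to the collision time, this is a technical gap in your self-contained version rather than a flaw in the strategy; modulo that point, your proof delivers the theorem.
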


Dyson Brownian motion~\eqref{eqn::SDE_halfwatermelon_intro} is well-understood. In particular, its lifetime $T=\infty$, i.e. the solution to~\eqref{eqn::SDE_halfwatermelon_intro} exists for all time. Green's function~\eqref{eqn::Green} is a generalization of previous Green's function of a single SLE curve~\cite{LawlerMinkowskiSLERealLine} and two SLE curves~\cite{ZhanGreen2SLEboundary}, see more discussion in Section~\ref{sec::NSLE_estimate}. 
The proof of Theorem~\ref{thm::transitiondensity} relies on the construction of multi-time-parameter local martingales, see Section~\ref{subsec::multitime_mart}. 

The relation~\eqref{eqn::transitiondensity_ppf} provides a tool to analyze multichordal SLEs using Dyson Brownian motion. For instance, one may analyze multiple SLEs as $N\to \infty$ using~\eqref{eqn::transitiondensity_ppf} as the asymptotic of Dyson Brownian motion as $N\to\infty$ is known, see~\cite{dMS16, HK18, HS21}. One may analyze multichordal SLEs as $\kappa\to 0+$ using~\eqref{eqn::transitiondensity_ppf} as the asymptotic of Dyson Brownian motion as $\beta\to\infty$ is known, this is related to large deviation principle for multiple SLEs, see~\cite{PeltolaWangSLELDP, AHP24}. In the following, we analyze multichordal SLEs as $t\to \infty$ using~\eqref{eqn::transitiondensity_ppf} as the asymptotic of Dyson Brownian motion as $t\to\infty$ is known~\cite{RoslerGeneralizedHermiteDunklOperators} (see Lemma~\ref{lem::density*}).

\subsection{Convergence to Dyson Brownian motion}

\begin{proposition}\label{prop::cvg_DysonBM}
Assume the same notations as in Theorem~\ref{thm::transitiondensity}. 
Let $\bs{X}_t$ be the solution to~\eqref{eqn::SDE_ppf} and let $T=\inf\{t: X_t^j\neq X_t^k\text{ for some }j\neq k\}$ be its lifetime. 
We have
\begin{equation}\label{eqn::multichordal_lifetime}
\QQ_{\alpha}(\bs{x})[T>t]=\mathcal{I}_{\shuffle_{2n}}^{-1}\LJ_{\alpha} G_{\alpha}(\bs{x}) \left(\frac{1}{\sqrt{\kappa t}}\right)^{A_{2n}^+}\left(1+O\left(\frac{|\bs{x}|}{\sqrt{t}}\right)\right),\qquad \text{as }t\to\infty,
\end{equation}
where $G_{\alpha}$ is Green's function defined in~\eqref{eqn::Green},
and $A_{2n}^+$ is the half-plane arm exponent for SLE \begin{equation}\label{eqn::SLE_halfplane_arm}
A_{2n}^+=\frac{n(4n+4-\kappa)}{\kappa}, 
\end{equation} 
and $\mathcal{I}_{\shuffle_{2n}}$ is the normalization constant defined in~\eqref{eqn::DysonBM_normalization} with $p=2n$, $\LJ_{\alpha}$ is the normalization constant defined in~\eqref{eqn::ppf_normalization}.  
Moreover,
\begin{itemize}
\item for $t>0$, denote by $\QQ_{\alpha}^{(t)}(\bs{x})$ the law of the solution to~\eqref{eqn::SDE_ppf} started from $\bs{x}$ conditional on $\{T>t\}$; 
\item denote by $\QQ_{\shuffle_{2n}}(\bs{x})$ the law of Dyson Brownian motion~\eqref{eqn::SDE_halfwatermelon_intro}  started from $\bs{x}$. 
\end{itemize}
Denote by $(\LF_t, t\ge 0)$ the filtration generated by the Brownian motions in the SDEs~\eqref{eqn::SDE_ppf} or~\eqref{eqn::SDE_halfwatermelon_intro}. Then for any $s>0$, when both measures $\QQ_{\alpha}^{(t)}(\bs{x})$ and $\QQ_{\shuffle_{2n}}(\bs{x})$ are restricted to $\LF_s$, the total variation distance between the two measures goes to zero as $t\to\infty$: 
\begin{equation}\label{eqn::cvg_tv}
\lim_{t\to\infty}\dist_{\mathrm{TV}}\left(\QQ_{\alpha}^{(t)}(\bs{x}), \QQ_{\shuffle_{2n}}(\bs{x})\right)=0. 
\end{equation}
\end{proposition}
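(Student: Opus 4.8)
The plan is to deduce everything from the transition-density identity~\eqref{eqn::transitiondensity_ppf} of Theorem~\ref{thm::transitiondensity}, combined with the known large-time asymptotics of the Dyson Brownian motion transition density (Lemma~\ref{lem::density*}). First I would establish~\eqref{eqn::multichordal_lifetime}. By definition of the transition density,
\begin{equation*}
\QQ_{\alpha}(\bs{x})[T>t]=\int_{\LX_{2n}}\mathsf{Q}_{\alpha}(t; \bs{x}, \bs{y})\,\ud\bs{y}=G_{\alpha}(\bs{x})\int_{\LX_{2n}}\frac{\mathsf{Q}_{\shuffle_{2n}}(t; \bs{x}, \bs{y})}{G_{\alpha}(\bs{y})}\,\ud\bs{y},
\end{equation*}
so everything reduces to the large-$t$ behaviour of the integral $\int \mathsf{Q}_{\shuffle_{2n}}(t;\bs{x},\bs{y})\LZ_{\alpha}(\bs{y})/\LZ_{\shuffle_{2n}}(\bs{y})\,\ud\bs{y}$. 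The scaling property of Dyson Brownian motion (Brownian scaling plus the homogeneity of the Coulomb interaction) gives $\mathsf{Q}_{\shuffle_{2n}}(t;\bs{x},\bs{y})=(\kappa t)^{-n(2n-1)}\mathsf{Q}_{\shuffle_{2n}}(1;\bs{x}/\sqrt{\kappa t},\bs{y}/\sqrt{\kappa t})$ up to the right bookkeeping of exponents; substituting $\bs{y}=\sqrt{\kappa t}\,\bs{z}$ and using the asymptotic expansion of $\mathsf{Q}_{\shuffle_{2n}}(1;\cdot,\cdot)$ near the origin from Lemma~\ref{lem::density*} — which produces the factor $|\bs{x}/\sqrt{\kappa t}|^{\text{(something)}}$ times a normalizing integral against $\LZ_{\shuffle_{2n}}$ — one reads off the power $(\kappa t)^{-A_{2n}^+/2}$ after accounting for the homogeneity degrees of $\LZ_{\alpha}$ and $\LZ_{\shuffle_{2n}}$. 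Matching the homogeneity of $\LZ_{\alpha}$ (degree $h$ with $2h$ the conformal weight) with that of $\LZ_{\shuffle_{2n}}$ (degree $\binom{2n}{2}\cdot 2/\kappa$) is exactly what yields the arm exponent~\eqref{eqn::SLE_halfplane_arm}; the constants $\mathcal{I}_{\shuffle_{2n}}$ and $\LJ_{\alpha}$ appear as the respective normalizing integrals, and the error term $O(|\bs{x}|/\sqrt{t})$ comes from the next-order term in Lemma~\ref{lem::density*} together with dominated convergence to control the tail of the $\bs{z}$-integral.

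Next I would prove the total-variation convergence~\eqref{eqn::cvg_tv}. The key observation is that on $\LF_s$ the Radon–Nikodym derivative of $\QQ_{\alpha}^{(t)}(\bs{x})$ with respect to $\QQ_{\shuffle_{2n}}(\bs{x})$ is, by the Markov property at time $s$ and the definition of conditioning on $\{T>t\}$,
\begin{equation*}
\frac{\ud\QQ_{\alpha}^{(t)}(\bs{x})}{\ud\QQ_{\shuffle_{2n}}(\bs{x})}\bigg|_{\LF_s}=\one_{\{T>s\}}\,\frac{G_{\alpha}(\bs{x})}{G_{\alpha}(\bs{X}_s)}\cdot\frac{\QQ_{\shuffle_{2n}}(\bs{X}_s)[T>t-s]}{\QQ_{\alpha}(\bs{x})[T>t]},
\end{equation*}
where I have used~\eqref{eqn::transitiondensity_ppf} to rewrite the $\QQ_\alpha$-law up to time $s$ as the $\QQ_{\shuffle_{2n}}$-law reweighted by $\one_{\{T>s\}}G_\alpha(\bs{x})/G_\alpha(\bs{X}_s)$, and then conditioned on $\{T>t\}$ by the usual $h$-transform bookkeeping. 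Plugging in the asymptotic~\eqref{eqn::multichordal_lifetime} for both $\QQ_{\shuffle_{2n}}(\bs{X}_s)[T>t-s]$ (note $T=\infty$ under $\QQ_{\shuffle_{2n}}$, so here one instead uses the analogous large-$t$ asymptotics; more precisely one uses that $\QQ_\alpha(\bs X_s)[T>t-s]/\QQ_\alpha(\bs x)[T>t]\to G_\alpha(\bs X_s)/G_\alpha(\bs x)$ from~\eqref{eqn::multichordal_lifetime} by cancelling the common $t^{-A_{2n}^+/2}$ factor), one sees that the Radon–Nikodym derivative on $\LF_s$ converges $\QQ_{\shuffle_{2n}}(\bs{x})$-a.s. to $\one_{\{T>s\}}\cdot 1=1$ as $t\to\infty$ (the event $\{T>s\}$ has full measure under $\QQ_{\shuffle_{2n}}$). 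To upgrade a.s. convergence of densities to $L^1$, hence to total-variation, convergence, I would invoke Scheffé's lemma, which requires only that the limit density integrate to one — and it does, being identically $1$.

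The main obstacle is making the asymptotic expansion in~\eqref{eqn::multichordal_lifetime} rigorous and, in particular, justifying the interchange of limit and integral over the unbounded chamber $\LX_{2n}$: one must show that the contribution of $\bs{y}$ far from the origin (equivalently $\bs{z}=\bs{y}/\sqrt{\kappa t}$ large) is negligible, which needs a Gaussian-type upper bound on $\mathsf{Q}_{\shuffle_{2n}}$ together with the polynomial growth of $\LZ_{\alpha}/\LZ_{\shuffle_{2n}}$, and one must control the contribution of $\bs{y}$ near the walls of the chamber, where $\LZ_{\alpha}/\LZ_{\shuffle_{2n}}$ may blow up but is integrable against the vanishing transition density. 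A secondary subtlety is uniformity of the error $O(|\bs{x}|/\sqrt{t})$ in the a.s. statement needed for Scheffé, i.e. that the convergence of the ratio $\QQ_\alpha(\bs X_s)[T>t-s]/\QQ_\alpha(\bs x)[T>t]$ holds for a.e. realization of $\bs X_s$; this follows once the error term in Lemma~\ref{lem::density*} is locally uniform in the starting point, which it should be since $\bs{X}_s$ is a.s. in the open chamber $\LX_{2n}$ under $\QQ_{\shuffle_{2n}}(\bs{x})$.
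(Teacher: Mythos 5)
Your proposal follows essentially the same route as the paper: integrate the transition-density identity~\eqref{eqn::transitiondensity_ppf} against the Dyson asymptotics of Lemma~\ref{lem::density*} and change variables to extract $A_{2n}^+$, $\mathcal{I}_{\shuffle_{2n}}$ and $\LJ_{\alpha}$, then identify the Radon--Nikodym derivative of $\QQ_{\alpha}^{(t)}(\bs{x})$ against $\QQ_{\shuffle_{2n}}(\bs{x})$ on $\LF_s$ via the Markov property and the $G_{\alpha}$-reweighting, show it tends to $1$, and conclude by Scheff\'e/bounded convergence. Two slips are cosmetic and do not affect the argument: the scaling prefactor should be $(\kappa t)^{-n}$ rather than $(\kappa t)^{-n(2n-1)}$ (and the scaling step is not needed, since Lemma~\ref{lem::density*} together with the homogeneity of $f_{8/\kappa}G_{\alpha}^{-1}$ already yields $\Lambda_{2n}-\Lambda_{2n}'=A_{2n}^+$), and in the displayed Radon--Nikodym formula the numerator should be $\QQ_{\alpha}(\bs{X}_s)[T>t-s]$ rather than $\QQ_{\shuffle_{2n}}(\bs{X}_s)[T>t-s]$, as you yourself note parenthetically.
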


\subsection{Estimate for multichordal SLEs}

We will relate the estimate~\eqref{eqn::multichordal_lifetime} to a geometric property of multichordal SLE. 
Suppose $\gamma\in \chamber(\HH; x, y)$. 
Denote by $B_r(z)$ the ball centered at $z\in\C$ with radius $r>0$. 
The size ``$\mathrm{hsiz}$" of $\gamma$ is introduced by~\cite{LLN09}: define $\mathrm{hsiz}(\gamma)$ to be the two-dimensional Lebesgue measure of the union of all balls centered by points in $\gamma$ that are tangent to the real line, i.e.
\begin{equation}\label{eqn::hsiz_def}
\mathrm{hsiz}(\gamma)=\mathrm{area}\left(\cup_{u+\ii v\in \gamma} B_v(u+\ii v)\right). 
\end{equation} 
See Figure~\ref{fig::hsiz}. The geometric quantity $\mathrm{hsiz}$ provides a natural Euclidean interpretation of the “size” of 
a curve. It is comparable to other geometric quantities, such as half-plane capacity, the area of the hyperbolic neighborhood of radius one, the area of all Whitney squares in the upper half plane that intersect the compact hull, and the area under the minimal Lipschitz function of norm $1$ that lies above the compact hull, see~\cite{LLN09, RW14}. 

\begin{figure}[ht!]
\begin{subfigure}[b]{0.45\textwidth}
\begin{center}
\includegraphics[width=\textwidth]{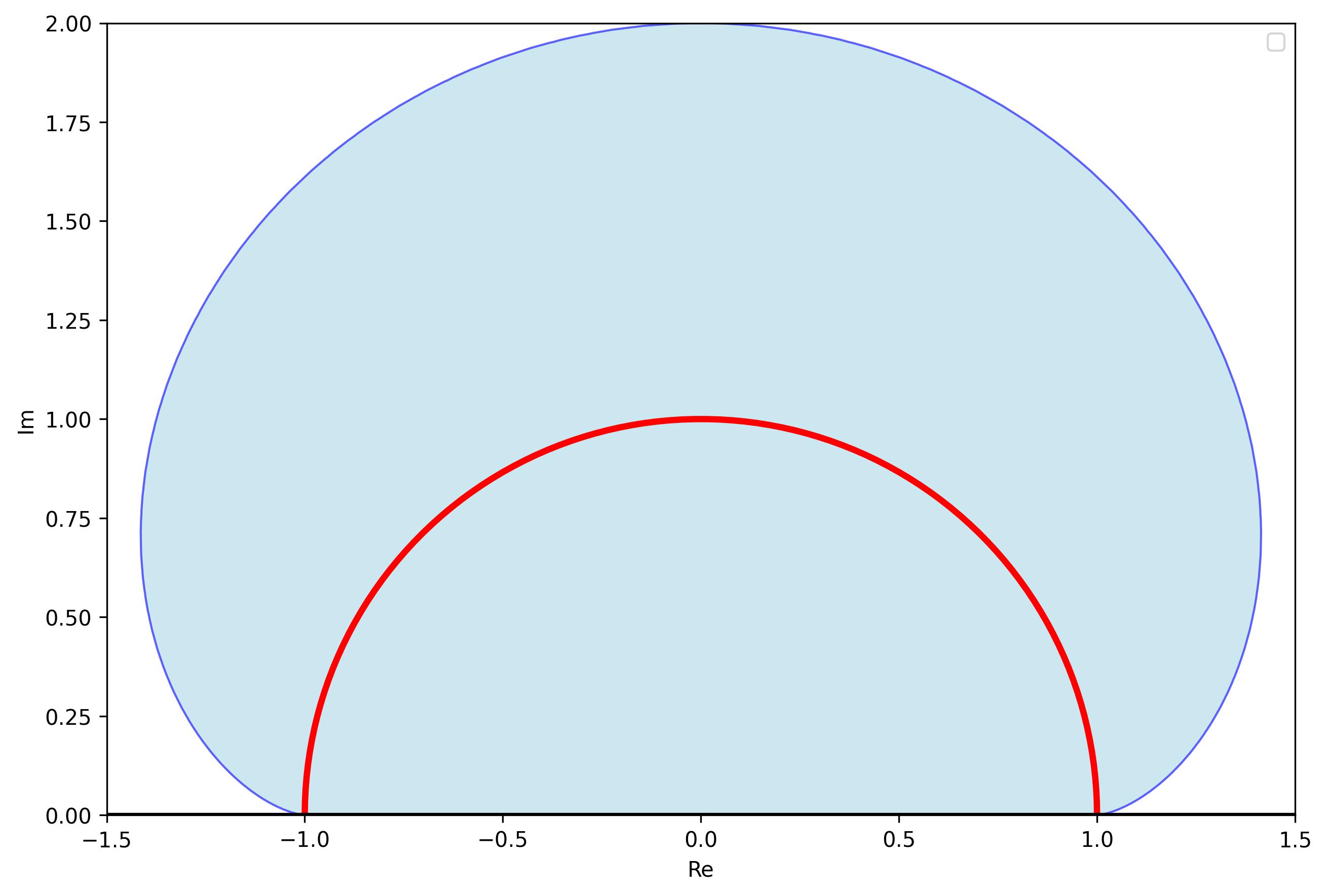}
\end{center}
\caption{}
\end{subfigure}
\begin{subfigure}[b]{0.45\textwidth}
\begin{center}
\includegraphics[width=\textwidth]{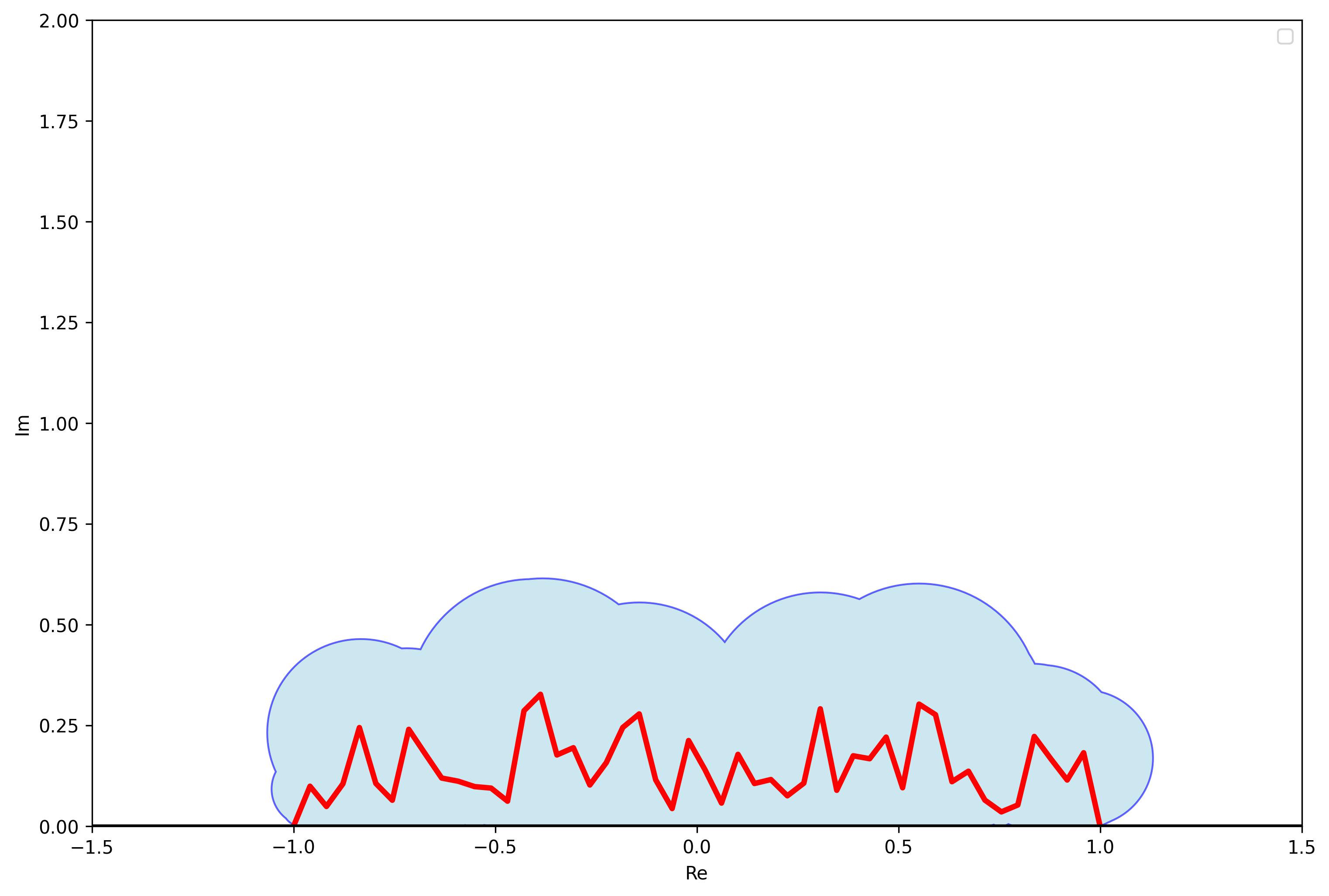}
\end{center}
\caption{}
\end{subfigure}

\caption{\label{fig::hsiz}  
Illustration for $\mathrm{hsiz}(\gamma)$ region $\cup_{x+\ii y\in \gamma} B_y(x+\ii y)$. The red curve indicates $\gamma$ and the green region indicates the region for $\mathrm{hsiz}(\gamma)$.}
\end{figure}

\begin{theorem}\label{thm::multichordalSLE_boundaryarm}
Fix $\kappa\in (0,4], n\ge 1$ and $\alpha\in\LP_n$. Fix $\bs{x}=(x_1, \ldots, x_{2n})\in\LX_{2n}$. Suppose $(\gamma^1, \ldots, \gamma^n)\sim\QQ_{\alpha}(\bs{x})$ is multichordal $\SLE_{\alpha}$ in $(\HH; \bs{x})$ associated to $\alpha$. 
Then\footnote{For two functions $F_1(\bs{x}; R)$ and $F_2(\bs{x}; R)$, we write $F_1(\bs{x}; R)\asymp F_2(\bs{x}; R)$ as $R/|\bs{x}|\to\infty$ if there exist constants $C\in (1, \infty)$ and $R_0>0$ such that $C^{-1}\le \frac{F_1(\bs{x}; R)}{F_2(\bs{x}; R)}\le C$ for all $R\ge R_0|\bs{x}|$.} 
\begin{equation}\label{eqn::SLE_halfplane_Green}
\QQ_{\alpha}(\bs{x})\left[\mathrm{hsiz}(\gamma^j)>R^2,\forall 1\le j\le n\right]\asymp G_{\alpha}(\bs{x})R^{-A_{2n}^+}, \qquad \text{as }R/|\bs{x}|\to\infty,
\end{equation} 
where $A_{2n}^+$ is the half-plane arm exponent for SLE~\eqref{eqn::SLE_halfplane_arm}
and $G_{\alpha}$ is Green's function in~\eqref{eqn::Green} and the implicit constants in $\asymp$ depend only on $\kappa, n, \alpha$. 
\end{theorem}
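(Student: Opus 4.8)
The plan is to deduce Theorem~\ref{thm::multichordalSLE_boundaryarm} from the precise lifetime asymptotics in Proposition~\ref{prop::cvg_DysonBM} by relating the event $\{\mathrm{hsiz}(\gamma^j)>R^2,\ \forall j\}$ to the event $\{T>t\}$ for the driving function under common-time parameterization, up to bounded multiplicative errors. First I would recall how the common-time-parameter (Section~\ref{subsec::commontime_mart}) is normalized: under this parameterization all $2n$ curves $\eta^1,\dots,\eta^{2n}$ grow ``simultaneously'' in a way controlled by a single clock, and the lifetime $T$ is the first time two driving points collide, which corresponds geometrically to the first time one of the curves $\gamma^j$ is about to be ``swallowed'' or connected to another marked point. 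The key geometric input is the comparison, for a single curve, between $\mathrm{hsiz}(\gamma)$ and the time elapsed in the relevant Loewner parameterization: by the results of~\cite{LLN09, RW14}, $\mathrm{hsiz}$ is comparable (up to universal constants) to half-plane capacity and to the area of the hyperbolic-radius-one neighborhood, so there exist constants $c_1,c_2>0$ such that an appropriate capacity-type time $t$ satisfies $c_1 t \le \mathrm{hsiz}(\gamma[0,t]) \le c_2 t$ along the growth. Hence the event $\{\mathrm{hsiz}(\gamma^j)>R^2,\ \forall j\}$ is sandwiched between $\{T > c_2^{-1}R^2\}$ and $\{T > c_1^{-1}R^2\}$ (modulo translating between the common-time clock and the individual half-plane capacities, which only costs bounded factors depending on $n$).

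The second step is to feed these two sandwiching events into~\eqref{eqn::multichordal_lifetime}. That estimate gives, as $t\to\infty$,
\[
\QQ_{\alpha}(\bs{x})[T>t]=\mathcal{I}_{\shuffle_{2n}}^{-1}\LJ_{\alpha}\, G_{\alpha}(\bs{x})\,(\kappa t)^{-A_{2n}^+/2}\bigl(1+O(|\bs{x}|/\sqrt{t})\bigr),
\]
so substituting $t = c_1^{-1}R^2$ and $t=c_2^{-1}R^2$ and using that $R/|\bs x|\to\infty$ forces the error term to $0$, both bounds are of the form $(\text{const})\cdot G_{\alpha}(\bs{x})\,R^{-A_{2n}^+}$ with the constant depending only on $\kappa,n,\alpha$ (through $\mathcal{I}_{\shuffle_{2n}}$, $\LJ_{\alpha}$, $c_1$, $c_2$). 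Squeezing gives exactly the claimed relation $\asymp$ with implicit constants depending only on $\kappa,n,\alpha$, and the threshold $R_0|\bs x|$ in the definition of $\asymp$ comes from the range in which the $O(|\bs x|/\sqrt t)$ term is, say, $\le 1/2$.

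The main obstacle I anticipate is making the first step rigorous: $\mathrm{hsiz}(\gamma^j)$ is a Euclidean quantity attached to a single curve viewed in its own complementary domain $\Omega_j$, whereas the common-time clock $t$ is a joint quantity for all $2n$ half-curves $\eta^i$ growing in $\HH$. One must check that stopping the common-time flow at time $t$ and looking at the trace of $\eta^j$ produces a curve whose $\mathrm{hsiz}$ (measured appropriately) is comparable to $t$, uniformly over the other curves; here the monotonicity of $\mathrm{hsiz}$ under restriction, conformal-distortion estimates near the boundary (using the $C^{1+\eps}$ hypothesis on the boundary arcs and that the marked points are bounded), and the fact that in common time no single curve can run away while the others stay short, all enter. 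A clean way to organize this is: (i) show $\{\mathrm{hsiz}(\gamma^j)>R^2\ \forall j\}$ up to bounded error equals $\{\text{common-time clock} > R^2\}$, using that the common-time clock is, by construction, comparable to $\min_j \mathrm{hsiz}(\eta^j[0,\cdot])$ or to a symmetric function of the individual half-plane capacities; (ii) relate the latter to $\{T>cR^2\}$ by noting the clock at the lifetime is infinite (the Dyson part has $T=\infty$, so the clock runs to $\infty$ before a collision can be forced by the time change) and the Green's-function weighting does not change the null sets — then invoke~\eqref{eqn::multichordal_lifetime}. I would present (i) as a lemma whose proof is the routine-but-careful conformal-geometry part, and keep the main proof of Theorem~\ref{thm::multichordalSLE_boundaryarm} to the two-line squeeze above.
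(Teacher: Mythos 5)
Your proposal follows essentially the same route as the paper's proof: compare $\mathrm{hsiz}$ with half-plane capacity via the bounds of~\cite{LLN09} (i.e.~\eqref{eqn::hsiz_hcap}), translate capacity into the common-time clock via $t\le t_j(t)\le 2nt$ from~\eqref{eqn::commontime_control}, so that $\{T>c_1R^2\}\subseteq\{\mathrm{hsiz}(\gamma^j)>R^2,\ \forall j\}\subseteq\{T>c_2R^2/n\}$, and then plug both sides into the lifetime asymptotics~\eqref{eqn::multichordal_lifetime}. One small correction: the inclusion $\{T>cR^2\}\subseteq\{\mathrm{hcap}(\gamma^j)\ \text{large},\ \forall j\}$ follows simply from $\mathrm{hcap}(\gamma^j)\ge t_{a_j}(t)\ge t$ for $t<T$ (and the reverse inclusion from monotonicity of $\mathrm{hcap}$, giving $\mathrm{hcap}(\gamma^j)\le 2nT$), not from your remark that ``the clock at the lifetime is infinite'': under $\QQ_{\alpha}(\bs{x})$ the lifetime $T$ is a.s.\ finite, only the Dyson reference measure $\QQ_{\shuffle_{2n}}(\bs{x})$ has infinite lifetime, and no boundary-regularity or conformal-distortion input is needed since everything takes place in $(\HH;\bs{x})$.
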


The proof of Theorem~\ref{thm::multichordalSLE_boundaryarm} relies on Proposition~\ref{prop::cvg_DysonBM} and the comparison between $\mathrm{hsiz}$ and half-plane capacity derived in~\cite{LLN09}. 
The exponent $A_{2n}^+$~\eqref{eqn::SLE_halfplane_arm} appeared in various places about boundary behavior of SLE curves~\cite{LawlerMinkowskiSLERealLine, MillerWuSLEIntersection, WuZhanSLEBoundaryArmExponents, ZhanGreen2SLEboundary}, and it was usually calculated using SLE martingales. 
Proposition~\ref{prop::cvg_DysonBM} gives a new derivation of the exponent $A_{2n}^+$ using the asymptotic of Dyson Brownian motion transition density. 
This exponent gives the corresponding arm exponents for critical lattice models~\cite{SmirnovWernerCriticalExponents, WuAlternatingArmIsing}. 
This exponent can also be predicted using conformal field theory~\cite{DuplantierConformalFractalGeometry, DuplantierGuttmannConfinedPolymerNetworks}: 
$A_{2n}^+=h_{1, 2n+1}(\kappa)$ 
where $h_{1,s+1}(\kappa)$ is Kac conformal weight
\[h_{1,s+1}(\kappa)=\frac{s(s+2)}{\kappa}-\frac{s}{2}.\]

The organization of the article and relation to other literature are summarized as follows. 
\begin{itemize}
\item We introduce multi-time-parameter local martingale in Section~\ref{sec::multitime_mart}. This is the main tool in the proof of Theorem~\ref{thm::transitiondensity}. 
The multi-time-parameter in radial coordinate appeared in~\cite{HealeyLawlerNSidedRadialSLE, FengWuYangIsing}. The analogous conclusions for multi-time-parameter local martingales in radial coordinate can be found in~\cite[Section~2]{HuangPeltolaWuMultiradialSLEResamplingBP}. 
\item We prove Theorem~\ref{thm::transitiondensity} in Section~\ref{sec::commontime_mart}. 
We first introduce comon-time-parameter in Section~\ref{subsec::commontime_mart}. 
Analogous common-time-parameter in radial coordinate can be found in~\cite{HealeyLawlerNSidedRadialSLE}. 
We then introduce Dyson Brownian motion in Section~\ref{subsec::DysonBM}. The most relevant conclusion for Dyson Brownian motion is the asymptotics of its transition density proved in~\cite{RoslerGeneralizedHermiteDunklOperators}, summarized in Lemma~\ref{lem::density*}. We complete the proof of Theorem~\ref{thm::transitiondensity} in Section~\ref{subsec::transitiondensity_proof}. 
We complete the proof of Proposition~\ref{prop::cvg_DysonBM} in Section~\ref{subsec::lifetime}. Its proof relies on Theorem~\ref{thm::transitiondensity} and Lemma~\ref{lem::density*}. 

\item We apply Theorem~\ref{thm::transitiondensity} for level lines of Gaussian free field in Section~\ref{sec::GFF_levellines}.
This is the case when $\kappa=4$ and the corresponding Dyson Brownian motion is of parameter $\beta=2$. In this case, the partition function~\eqref{eqn::GFF_partitionfunction} and the SDEs~\eqref{eqn::GFFlevellines_SDE} and the Green's function~\eqref{eqn::Greenfunction_GFF} are all explicit, see Proposition~\ref{prop::GFF_DysonBM}. 
\item We prove Theorem~\ref{thm::multichordalSLE_boundaryarm} in Section~\ref{sec::NSLE_estimate}. 
\end{itemize}

\subsubsection*{Acknowledgements.}
We thank Zhonggen Su who suggested this topic to H.W. and provided helpful discussions. We thank Vadim Gorin for helpful discussion and in particular, for pointing out useful references in Lemma~\ref{lem::density*}. We thank Mo Chen for helpful discussion and in particular, for pointing out useful references for the geometric interpretation of half plane capacity. H.W. is supported by Beijing Natural Science Foundation (JQ20001) and New Cornerstone Investigator Program 100001127. H.W. is partly affiliated at Yanqi Lake Beijing Institute of Mathematical Sciences and Applications, Beijing, China.

\section{Multi-time-parameter local martingales}
\label{sec::multitime_mart}
\subsection{Preliminaries on SLE}
\label{subsec::pre_Loewner}

\paragraph*{Half-plane capacity.}
An $\HH$-hull is a relatively closed subset $K$ of $\HH$ such that $K$ is bounded and $\HH\setminus K$ is simply connected. 
Let $g_K$ be
the conformal map from $\HH\setminus K$ onto $\HH$ with the normalization $\lim_{z\to\infty}|g_K(z)-z|=0$. We call $g_K$ the mapping-out function of $K$. Note that there exists a constant $c_K\ge 0$ such that 
\[g_K(z)=z+\frac{2c_K}{z}+o(1/|z|), \qquad \text{as }z\to\infty.\]
The constant $c_K$ is called the half-plane capacity and denoted by $\mathrm{hcap}(K)$.  

\paragraph*{Loewner chain and SLE.}
Fix $x\in\R$ and $T>0$. Let $\eta: [0,T]\to\overline{\HH}$ be a continuous simple curve such that $\eta_0=x$ and $\eta_{(0,T)}\subset\HH$. For each $t\in[0,T]$, let $g_t$ be the mapping-out function of $\eta_{[0,t]}$. We say that $\eta$ is parameterized by the half-plane capacity if, for all $t\in [0,T]$, 
\[g_t(z)=z+\frac{2t}{z}+o(1/|z|), \qquad\text{as }z\to\infty.\]
These maps $\{g_t: t\in [0,T]\}$ solve the Loewner equation
\begin{equation*}
\partial_t g_t(z)=\frac{2}{g_t(z)-W_t}, \qquad g_0(z)=z,
\end{equation*}
where $W: [0,T]\to \R$ is a continuous function called the driving function of $\eta$. 

Fix $\kappa>0$ and $x\in\R$. The process $\SLE_{\kappa}$ in $(\HH; x, \infty)$ is defined as Loewner chain with driving function $W_t=\sqrt{\kappa}B_t+x$ where $B_t$ is standard one-dimensional Brownian motion. When $\kappa\in (0,4]$, it is almost surely a continuous simple curve. For a $2$-polygon $(\Omega; x, y)$, let $\varphi: \Omega\to\HH$ be a conformal map with $\varphi(y)=\infty$. $\SLE_{\kappa}$ in $(\Omega; x, y)$ is defined as the image of $\SLE_{\kappa}$ in $(\HH; \varphi(x), \infty)$ under $\varphi^{-1}$.

\paragraph*{SLE with force points.} Fix $\kappa>0$ and $\ell \ge 1, r\ge 1$. Denote $\bs{x}^L=(x^L_{\ell}, \ldots, x^L_1), \bs{x}^R=(x^R_1, \ldots, x^R_r)$ with $x^L_{\ell}<\cdots<x^L_1<0<x_1^R<\cdots<x_r^R$ and $\bs{\rho}^L=(\rho^L_{\ell}, \ldots, \rho^L_1)\in\R^{\ell}, \bs{\rho}^R=(\rho^R_1, \ldots, \rho^R_r)\in \R^r$. The process $\SLE_{\kappa}(\bs{\rho}^L; \bs{\rho}^R)$ in $\HH$ from $0$ to $\infty$ with force points $(\bs{x}^L; \bs{x}^R)$ is defined as the Loewner chain with driving function $W_t$ solves the SDE
\begin{align}
\begin{cases}
\displaystyle\ud W_t=\sqrt{\kappa}\ud B_t+\sum_{j=1}^{\ell}\frac{\rho_j^L\ud t}{W_t-V_t^{L, j}}+\sum_{j=1}^{r}\frac{\rho_j^R\ud t}{W_t-V_t^{R, j}}, \qquad W_0=0;\\
\displaystyle\ud V_t^{L, j}=\frac{2\ud t}{V_t^{L, j}-W_t},\qquad V_0^{L, j}=x_j^L, \quad 1\le j\le \ell; \\
\displaystyle\ud V_t^{R, j}=\frac{2\ud t}{V_t^{R,j}-W_t}, \qquad V_0^{R, j}=x_j^R, \quad 1\le j\le r. 
\end{cases}
\end{align}
For a $(\ell+r+2)$-polygon $(\Omega; \bs{x}^L, x, \bs{x}^R, y)$ with $\bs{x}^L=(x^L_{\ell}, \ldots, x^L_1), \bs{x}^R=(x^R_1, \ldots, x^R_r)$, let $\varphi:\Omega\to\HH$ be a conformal map with $\varphi(y)=\infty$. 
$\SLE_{\kappa}(\bs{\rho}^L; \bs{\rho}^R)$ in $(\Omega; \bs{x}^L, x, \bs{x}^R, y)$ is defined as the image of $\SLE_{\kappa}(\bs{\rho}^L; \bs{\rho}^R)$ in $\HH$ from $\varphi(x)$ to $\infty$ with force points $(\varphi(\bs{x}^L); \varphi(\bs{x}^R))$ under $\varphi^{-1}$. 

\subsection{Local martingale for multichordal SLE}
\label{subsec::multitime_mart}

\paragraph*{Multi-time-parameter.}
Fix $p\ge 2$ and $\bs{x}=(x_1, \ldots, x_p)\in\LX_p$. Consider a $p$-tuple $\bs{\eta}_{\bs{t}}=(\eta^1_{t_1}, \ldots, \eta^p_{t_p})$ of simple curves in $\HH$, parameterized by $\bs{t}=(t_1, \ldots, t_p)\in[0,\infty)^p$, such that $\eta^j_0=x_j$ for each $j$ and the segments $\{\eta^j_{[0,t_j]}\}_{1\le j\le p}$ are disjoint. We define the following normalized conformal transformations:
\begin{itemize}
\item $g_{t_j}^j$ is the conformal map from $\HH\setminus\eta^j_{[0,t_j]}$ onto $\HH$ with $\lim_{z\to\infty}|g_{t_j}^j(z)-z|=0$, $1\le j\le p$.
\item $g_{\bs{t}}$ is the conformal map from $\HH\setminus\cup_{j=1}^p \eta^j_{[0,t_j]}$ onto $\HH$ with $\lim_{z\to\infty}|g_{\bs{t}}(z)-z|=0$.
\item $g_{\bs{t}, j}$ is the conformal map from $\HH\setminus g_{t_j}^j\left(\cup_{i\neq j}\eta^i_{[0,t_i]}\right)$ onto $\HH$ with $\lim_{z\to\infty}|g_{\bs{t}, j}(z)-z|=0$, $1\le j\le p$. 
\end{itemize}
Using these notations, we have $g_{\bs{t}}=g_{\bs{t}, j}\circ g_{t_j}^j$ for $1\le j\le p$. We say that the $p$-tuple $\bs{\eta}_{\bs{t}}$ of curves has $p$-time-parameter if $g^{j}$ is parameterized by the half-plane capacity: 
\begin{equation}\label{eqn::multitime_def}
g_{t_j}^j(z)=z+\frac{2t_j}{z}+o(1/|z|),\qquad \text{as }z\to\infty.
\end{equation}
Denote by $W^j$ the driving function of each $\eta^j$, and we define the driving function of the $p$-tuple $\bs{\eta}_{\bs{t}}$, started at $\bs{X}_0=(x_1, \ldots, x_p)$, by
\begin{equation}\label{eqn::multitime_driving}
\bs{X}_{\bs{t}}=(X^1_{\bs{t}}, \ldots, X^p_{\bs{t}}), \qquad \text{with }X^j_{\bs{t}}=g_{\bs{t}, j}(W^j_{t_j}), \qquad \text{for }1\le j\le p.
\end{equation}

\begin{proposition}\label{prop::multitime_mart}
Fix $\kappa>0, p\ge 2$ and $\bs{x}=(x_1, \ldots, x_p)\in\LX_p$. We fix the following parameters indexed by $\kappa>0$: 
\begin{equation}\label{eqn::parametersbc}
\mathfrak{b}=\frac{6-\kappa}{2\kappa} \qquad\text{and}\qquad\mathfrak{c}=\frac{(6-\kappa)(3\kappa-8)}{2\kappa}.
\end{equation}
For each $j\in\{1, \ldots, p\}$, let $\eta^j$ be $\SLE_{\kappa}$ in $(\HH; x_j,\infty)$ and let $\PP_p$ be the probability measure on $\bs{\eta}=(\eta^1, \ldots, \eta^p)$ under which the curves are independent. We parameterize $\bs{\eta}$ by $p$-time-parameter $\bs{t}$ and let $\bs{X}_{\bs{t}}$ denote the driving function as in~\eqref{eqn::multitime_driving}. For a function $\LZ: \LX_p\to \R_{>0}$, define 
\begin{equation}\label{eqn::multitime_mart_general}
M_{\bs{t}}(\LZ)=\one_{\LE_{\emptyset}(\bs{\eta}_{\bs{t}})} \prod_{j=1}^p g'_{\bs{t},j}(W^j_{t_j})^{\mathfrak{b}}\times \exp\left(\frac{\mathfrak{c}}{2}\blm_{\bs{t}}\right)\times \LZ(\bs{X}_{\bs{t}}), 
\end{equation}
where $\LE_{\emptyset}(\bs{\eta}_{\bs{t}})=\{\eta^i_{[0,t_i]}\cap\eta^j_{[0,t_j]}=\emptyset, \forall i\neq j\}$ is the event that different curves are disjoint, and $\blm_{\bs{t}}$ is the unique potential solving the exact differential equation (see Lemma~\ref{lem::mt_exact})
\begin{equation}\label{eqn::mt_def}
\ud \blm_{\bs{t}}=\sum_{j=1}^p \partial_{t_j}\blm_{\bs{t}}\ud t_j=\sum_{j=1}^p -\frac{1}{3}\LS g_{\bs{t}, j}(W^j_{t_j})\ud t_j,\qquad \blm_{\bs{0}}=0,
\end{equation}
with $\LS g=\frac{g'''}{g'}-\frac{3}{2}\left(\frac{g''}{g'}\right)^2$ denoting the Schwarzian derivative of a function $g$. Then, the process $M_{\bs{t}}(\LZ)$ is $p$-time-parameter local martingale under $\PP_p$ if and only if $\LZ$ is smooth and satisfies the following system of BPZ equations: for each $\bs{x}=(x_1, \ldots, x_p)\in\LX_p$, for all $1\le j\le p$, 
\begin{equation}\label{eqn::BPZ_general}
\left[\frac{\kappa}{2}\partial_j^2+\sum_{\ell\neq j}\left(\frac{2}{(x_{\ell}-x_j)}\partial_{\ell}-\frac{2\mathfrak{b}}{(x_{\ell}-x_j)^2}\right)\right]\LZ(x_1, \ldots, x_p)=0. 
\end{equation}
\end{proposition}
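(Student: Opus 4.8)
The plan is to prove the ``if and only if'' by a single computation: apply It\^o's formula to $M_{\bs{t}}(\LZ)$ with respect to each time-parameter $t_j$ separately (holding the other $t_i$ fixed), and read off the drift. By the definition of $p$-time-parameter local martingale one needs, for each fixed $j$, that $M_{\bs{t}}$ is a local martingale in $t_j$; and since $\LZ$ and the conformal data depend jointly on all variables in a smooth way, it suffices to show that this holds along each coordinate direction. So fix $j$, freeze $(t_i)_{i\neq j}$, and differentiate in $t_j$.

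The key observation is the commutation/cascade relation $g_{\bs{t}}=g_{\bs{t},j}\circ g_{t_j}^j$, together with the fact that under $\PP_p$ the curve $\eta^j$ is an independent $\SLE_\kappa$, so $W^j_{t_j}=\sqrt{\kappa}B^j_{t_j}+x_j$ and the images of the other curves, namely the hulls $g_{t_j}^j(\cup_{i\neq j}\eta^i_{[0,t_i]})$, evolve under the Loewner flow of $\eta^j$ in a deterministic-in-$\eta^j$ way (they are just transported). Standard chordal-SLE computations (cf.\ the single-curve coordinate-change / restriction arguments) then give: the Loewner flow moves each force point $V^{\ell}_{t_j}:=g_{t_j}^j(\text{tip of }\eta^\ell)$ — equivalently the points $X^\ell_{\bs t}=g_{\bs t,\ell}(W^\ell_{t_\ell})$ as seen through $g_{t_j}^j$ — and one computes $\ud X^\ell_{\bs t}$ (in $t_j$) $= 2\,\ud t_j/(X^\ell_{\bs t}-X^j_{\bs t})$ for $\ell\neq j$ from the chain rule for $g_{\bs t,\ell}$, while $X^j_{\bs t}$ itself is driven by $\sqrt\kappa\,\ud B^j$ pushed through $g_{\bs t,j}$; the logarithmic-derivative factor $\prod_\ell g'_{\bs t,\ell}(W^\ell_{t_\ell})^{\mathfrak b}$ contributes a drift through $\partial_{t_j}\log g'_{\bs t,j}(W^j_{t_j})$, for which one uses the well-known identity $\partial_t\log g_t'(W_t)=-\tfrac{6}{(g_t(z)-W_t)^2}\big|$ — more precisely the expansion of $\partial_t g_{\bs t,j}'$ and $\partial_t g_{\bs t,j}''$ at $W^j_{t_j}$ in terms of the other force points; and the exponential factor $\exp(\tfrac{\mathfrak c}{2}\blm_{\bs t})$ contributes exactly $-\tfrac{\mathfrak c}{6}\LS g_{\bs t,j}(W^j_{t_j})$ by~\eqref{eqn::mt_def}. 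Collecting all It\^o terms, the second-order part produces $\tfrac{\kappa}{2}\partial_j^2\LZ$, the transport of force points produces $\sum_{\ell\neq j}\tfrac{2}{x_\ell-x_j}\partial_\ell\LZ$, and the $g'$-factors produce the $-\sum_{\ell\neq j}\tfrac{2\mathfrak b}{(x_\ell-x_j)^2}\LZ$ term; the Schwarzian contributions from $\log g'_{\bs t,j}$ and from the $\blm$-exponential are arranged, via the specific values $\mathfrak b=\tfrac{6-\kappa}{2\kappa}$ and $\mathfrak c=\tfrac{(6-\kappa)(3\kappa-8)}{2\kappa}$, to cancel exactly (this is the classical coincidence behind the BPZ/restriction formalism). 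Hence the total drift in $t_j$ equals a positive prefactor times $\big[\tfrac\kappa2\partial_j^2+\sum_{\ell\neq j}(\tfrac{2}{x_\ell-x_j}\partial_\ell-\tfrac{2\mathfrak b}{(x_\ell-x_j)^2})\big]\LZ$ evaluated at $\bs X_{\bs t}$, which vanishes identically iff~\eqref{eqn::BPZ_general} holds; the indicator $\one_{\LE_\emptyset(\bs\eta_{\bs t})}$ only matters up to the stopping time at which curves first touch, consistent with ``local'' martingale. Smoothness of $\LZ$ is needed to justify It\^o's formula and to make sense of $\partial_j^2\LZ$.

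For the converse direction one runs the same computation: if $M_{\bs t}(\LZ)$ is a $t_j$-local martingale for every $j$ and every admissible configuration, then the drift must vanish for all $\bs x\in\LX_p$ and all $j$, which forces~\eqref{eqn::BPZ_general}; here one uses that the map $\bs x\mapsto \bs X_{\bs t}$ can be made to realize an open set of configurations, so pointwise vanishing of the drift as a function of the ``current'' positions is equivalent to the PDE holding everywhere on $\LX_p$. One should also note that requiring $M_{\bs t}(\LZ)$ to be a genuine It\^o process (so that ``local martingale'' is meaningful) already builds in that $\LZ$ is $C^2$ in the relevant variables, matching the ``smooth'' hypothesis.

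The main obstacle is bookkeeping rather than conceptual: one must carefully track how the three families of conformal maps $g^j_{t_j}$, $g_{\bs t}$, $g_{\bs t,j}$ interact, in particular the $t_j$-evolution of $g_{\bs t,\ell}$ for $\ell\neq j$ (which is where the force-point drift $2/(x_\ell-x_j)$ and the $1/(x_\ell-x_j)^2$ term originate), and verify that the Schwarzian terms coming from $\partial_{t_j}\log g'_{\bs t,j}(W^j_{t_j})$ combine with the $\blm_{\bs t}$-contribution so that the net Schwarzian coefficient is zero precisely for the stated $(\mathfrak b,\mathfrak c)$. This is the standard SLE coordinate-change computation (as in Lemma~\ref{lem::mt_exact} and the single-curve case), so I would organize the proof by first recording the $t_j$-dynamics of $W^j_{t_j}$, of the ``other'' driving coordinates $X^\ell_{\bs t}$, of $g'_{\bs t,j}(W^j_{t_j})$ and of $\blm_{\bs t}$ as four lemmas/displays, and then assembling the It\^o differential of $M_{\bs t}(\LZ)$ in $t_j$ in one final display whose drift is manifestly the left-hand side of~\eqref{eqn::BPZ_general} up to the nonvanishing prefactor $\one_{\LE_\emptyset}\prod_\ell g'_{\bs t,\ell}(W^\ell_{t_\ell})^{\mathfrak b}\exp(\tfrac{\mathfrak c}{2}\blm_{\bs t})$.
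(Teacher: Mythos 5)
Your computational core is the same as the paper's proof: apply It\^o's formula to $M_{\bs{t}}(\LZ)$, track the dynamics of $X^j_{\bs{t}}$, of $g'_{\bs{t},j}(W^j_{t_j})$ and of $\blm_{\bs{t}}$, and check that the Schwarzian and $(g''/g')^2$ contributions cancel exactly for $\mathfrak{b}=\frac{6-\kappa}{2\kappa}$ and $\mathfrak{c}=\mathfrak{b}(3\kappa-8)$, so that the $\ud t_j$-drift is a positive prefactor times the $j$-th BPZ operator applied to $\LZ$ at $\bs{X}_{\bs{t}}$. One bookkeeping correction: the transport of the other coordinates in $t_j$ is $\ud X^\ell_{\bs{t}}=\frac{2\,(g'_{\bs{t},j}(W^j_{t_j}))^2}{X^\ell_{\bs{t}}-X^j_{\bs{t}}}\ud t_j$ for $\ell\neq j$, not $\frac{2}{X^\ell_{\bs{t}}-X^j_{\bs{t}}}\ud t_j$ as you wrote; these conformal factors are precisely what give every $\ud t_j$-drift term the common prefactor $(g'_{\bs{t},j}(W^j_{t_j}))^2$, without which the terms would not assemble into the BPZ operator evaluated at $\bs{X}_{\bs{t}}$.

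The genuine gap is in the ``only if'' direction. The proposition asserts that if $M_{\bs{t}}(\LZ)$ is a local martingale then $\LZ$ is \emph{smooth} and solves~\eqref{eqn::BPZ_general}, and your justification --- that for ``local martingale'' to be meaningful $M$ must be a genuine It\^o process, which ``builds in'' that $\LZ$ is $C^2$ --- does not hold: $M_{\bs{t}}(\LZ)$ is a well-defined process for any positive measurable $\LZ$, so the local-martingale property is a meaningful hypothesis with no a priori regularity, and even knowing $M$ is a semimartingale would not produce $\partial_j^2\LZ$, let alone the $C^\infty$ regularity claimed in the statement. The paper closes this by observing that, without the $C^2$ assumption, the same computation shows the local-martingale property is equivalent to $\LZ$ satisfying~\eqref{eqn::BPZ_general} as a weak solution, and then invoking hypoellipticity of the BPZ operators (\cite[Proposition~A.5]{FengLiuPeltolaWu2024}) to conclude that weak solutions are smooth. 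Some regularity argument of this kind is necessary; as written, your proposal establishes the ``if'' direction, and the ``only if'' direction only under an additional (and unjustified) $C^2$ hypothesis.
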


\begin{proof}
Using It\^o's formula, we obtain
\begin{equation}\label{eqn::multitime_mart_aux1}
	\ud X_{\bs{t}}^j=g'_{\bs{t},j}(W_{t_j}^j) \ud W_{t_j}^j -\kappa \mathfrak{b} g''_{\bs{t},j}(W_{t_j}^j) \ud t_j + \sum_{\ell\neq j} \frac{2}{X_{\bs{t}}^j-X_{\bs{t}}^\ell} \left( g'_{\bs{t},\ell}(W_{t_\ell}^\ell) \right)^2 \ud t_\ell,
\end{equation}
and
\begin{align}\label{eqn::multitime_mart_aux2}
\begin{split}
	\frac{\ud g'_{\bs{t},j}(W_{t_j}^j)}{g'_{\bs{t},j}(W_{t_j}^j)}= & \frac{g''_{\bs{t},j}(W_{t_j}^j)}{g'_{\bs{t},j}(W_{t_j}^j)} \ud W_{t_j}^j  - \sum_{\ell\neq j} \frac{2}{\left( X_{\bs{t}}^j-X_{\bs{t}}^\ell \right)^2} 	\left( g'_{\bs{t},\ell}(W_{t_j}^\ell) \right)^2 \ud t_\ell\\
	&+ \left( \left( \frac{3\kappa-8}{6} \right)  \frac{g'''_{\bs{t},j}(W_{t_j}^j)}{g'_{\bs{t},j}(W_{t_j}^j)} + \frac{1}{2} \left( \frac{g''_{\bs{t},j}(W_{t_j}^j)}{g'_{\bs{t},j}(W_{t_j}^j)} \right)^2 \right) \ud t_j.
\end{split}
\end{align}
We first assume $\LZ\in C^2(\LX_p)$ and apply It\^{o}'s formula to obtain
\begin{align} \label{eqn::multitime_mart_aux3}
\begin{split}
	\frac{\ud M_{\bs{t}}(\LZ)}{M_{\bs{t}}(\LZ)}= & \sum_{j=1}^p \left( \mathfrak{b} \frac{\ud g'_{\bs{t},j}(W_{t_j}^j)}{g'_{\bs{t},j}(W_{t_j}^j)} + \frac{\kappa \mathfrak{b}(\mathfrak{b}-1)}{2} \left( \frac{g''_{\bs{t},j}(W_{t_j}^j)}{g'_{\bs{t},j}(W_{t_j}^j)} \right)^2 \ud t_j \right)  + \frac{\mathfrak{c}}{2} \ud \blm_{\bs{t}} \\
	& + \sum_{j=1}^p \left(  \frac{\partial_j \LZ (\bs{X_t})}{\LZ (\bs{X_t})} \ud X_{\bs{t}}^j + \frac{\kappa}{2} \frac{\partial_j^2 \LZ(\bs{X_t})}{\LZ(\bs{X_t})} \left( g'_{\bs{t},j}(W_{t_j}^j) \right)^2  \ud t_j \right) + \kappa \mathfrak{b} \sum_{j=1}^{p} \frac{\partial_j \LZ(\bs{X_t})}{\LZ(\bs{X_t})} g''_{\bs{t},j}(W_{t_j}^j) \ud t_j.
\end{split}
\end{align}
Plugging~\eqref{eqn::mt_def},~\eqref{eqn::multitime_mart_aux1} and~\eqref{eqn::multitime_mart_aux2} into~\eqref{eqn::multitime_mart_aux3} we obtain
\begin{align} \label{eqn::multitime_mart_aux4}
\begin{split}
	\frac{\ud M_{\bs{t}}(\LZ)}{M_{\bs{t}}(\LZ)}= & \sum_{j=1}^p \left( \frac{\partial_j \LZ(\bs{X_t})}{\LZ(\bs{X_t})} g'_{\bs{t},j}(W_{t_j}^j) + \mathfrak{b} \frac{g''_{\bs{t},j}(W_{t_j}^j)}{g'_{\bs{t},j}(W_{t_j}^j)} \right) \ud W_{t_j}^j \\
	& + \sum_{j=1}^p \left( g'_{\bs{t},j}(W_{t_j}^j) \right)^2 \left( \frac{\kappa}{2} \frac{\partial_j^2 \LZ(\bs{X_t})}{\LZ(\bs{X_t})} + \sum_{\ell\neq j} \left( \frac{2}{(X_{\bs{t}}^\ell-X_{\bs{t}}^j)} \frac{\partial_\ell \LZ(\bs{X_t})}{\LZ(\bs{X_t})} - \frac{2\mathfrak{b}}{(X_{\bs{t}}^\ell-X_{\bs{t}}^j)^2} \right) \right) \ud t_j.
\end{split}
\end{align}
From this expression, we see that $M_{\bs{t}}(\LZ)$ is a $p$-time-parameter local martingale if and only if all the drift terms vanish, which happens precisely when $\LZ$ satisfies the system of BPZ equations~\eqref{eqn::BPZ_general}.

To lift the assumption that $\LZ$ is $C^2$, the above analysis tells that $M_{\bs{t}}(\LZ)$ is a $p$-time-parameter local martingale if and only if $\LZ$ satisfies the system of BPZ equations~\eqref{eqn::BPZ_general} as a weak solution. Since the differential operators in~\eqref{eqn::BPZ_general} are hypoelliptic~\cite[Proposition~A.5]{FengLiuPeltolaWu2024}, weak solutions are strong solutions and in particular are smooth. This concludes the proof.
\end{proof}

In the following two lemmas, we address the term $\blm_{\bs{t}}$ defined through~\eqref{eqn::mt_def}. In Lemma~\ref{lem::mt_exact}, we show that the differential equation~\eqref{eqn::mt_def} is exact and hence $\blm_{\bs{t}}$ is well-defined. In Lemma~\ref{lem::mt_blm}, we show that $\blm_{\bs{t}}$ can be interpreted as Brownian loop measure and hence it is finite. 

\begin{lemma}\label{lem::mt_exact}
The differential equation~\eqref{eqn::mt_def} is exact and has a unique solution $\blm_{\bs{t}}$. 
\end{lemma}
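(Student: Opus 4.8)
The plan is to verify directly that the $1$-form $\omega = \sum_{j=1}^p \omega_j\,\ud t_j$ with $\omega_j := -\tfrac13\,\LS g_{\bs t,j}(W^j_{t_j})$ is closed, i.e.\ that $\partial_{t_k}\omega_j = \partial_{t_j}\omega_k$ for all $j\neq k$; since the parameter domain $\{\bs t\in[0,\infty)^p : \eta^i_{[0,t_i]}\cap\eta^j_{[0,t_j]}=\emptyset\}$ containing $\bs 0$ is simply connected (it is an increasing-in-each-coordinate open set, star-shaped with respect to $\bs 0$), closedness gives exactness, and then $\blm_{\bs t}:=\int_{\bs 0}^{\bs t}\omega$ along any path is the unique solution with $\blm_{\bs 0}=0$. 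First I would record the commutation relations among the conformal maps: for $j\neq k$ one has $g_{\bs t} = g_{\bs t, j}\circ g^j_{t_j} = g_{\bs t, k}\circ g^k_{t_k}$, and differentiating the Loewner flow for $\eta^k$ (parameterized by half-plane capacity in the $t_k$ variable) transported through $g^j_{t_j}$ shows that $\partial_{t_k} g_{\bs t,j}$ is obtained by composing $g_{\bs t,j}$ with the vector field $2/(\,\cdot - X^k_{\bs t})$ at the appropriate point; this is exactly the computation already carried out in~\eqref{eqn::multitime_mart_aux1}--\eqref{eqn::multitime_mart_aux2} for the derivatives of $X^j_{\bs t}$ and $g'_{\bs t,j}(W^j_{t_j})$, so I would reuse those formulas.

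The core of the argument is then a Schwarzian chain-rule bookkeeping. Write $h = g_{\bs t,j}$ evaluated near $w = W^j_{t_j}$; moving $t_k$ conjugates $h$ by the infinitesimal Loewner map generated at the point $X^k_{\bs t} = g^j_{t_j}\big(\text{tip of }\eta^k\big)$ pushed forward, so that $\partial_{t_k}(\LS h)(w)$ can be expressed, via the cocycle identity $\LS(f\circ g) = (\LS f\circ g)\,(g')^2 + \LS g$, in terms of $h$, its derivatives up to order three at $w$, and the data $X^k_{\bs t}$, $g'_{\bs t,k}(W^k_{t_k})$. Carrying this out yields a symmetric expression in the pair $(j,k)$ — concretely, $\partial_{t_k}\omega_j$ and $\partial_{t_j}\omega_k$ both reduce to the same rational function of $X^j_{\bs t}, X^k_{\bs t}$ and the derivatives of $g_{\bs t}$, reflecting the fact that the total configuration map $g_{\bs t}$ is being grown in two independent ``directions'' and the mixed second variation is order-independent. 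Equivalently, one may argue more structurally: $\blm_{\bs t}$ ought to equal (a constant times) the Brownian loop measure of loops in $\HH$ that are disconnected from $\infty$ by $\cup_j \eta^j_{[0,t_j]}$ but not by any proper sub-union — this identification is the content of the following Lemma~\ref{lem::mt_blm}, and the loop-measure quantity is manifestly a well-defined function of $\bs t$, hence its differential is automatically exact; but for \emph{this} lemma I would keep the proof self-contained and purely computational, deferring the loop-measure interpretation.

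The main obstacle I anticipate is the third-order nature of the Schwarzian: one must track how $g'''_{\bs t,j}$, not merely $g'_{\bs t,j}$ and $g''_{\bs t,j}$, evolves under $\partial_{t_k}$, which requires differentiating~\eqref{eqn::multitime_mart_aux2} once more in the spatial variable and is where sign and combinatorial-coefficient errors are most likely. A clean way to organize it is to fix $k$, set $F := g_{\bs t,k}$ and note $g_{\bs t,j} = g_{\bs t,k}$ composed (on the appropriate side) with the $t_j$-growth data, then use that $\partial_{t_k}F$ is a Loewner vector field and apply the Schwarzian cocycle identity to $F\circ(\text{the rest})$ so that all $t_k$-dependence is funneled through a single explicit vector field; the symmetry $\partial_{t_k}\omega_j = \partial_{t_j}\omega_k$ then drops out of the symmetry of the resulting two-point expression under swapping $(j,X^j_{\bs t})\leftrightarrow(k,X^k_{\bs t})$. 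Uniqueness is immediate once exactness is established, since any two primitives vanishing at $\bs 0$ differ by a locally constant function on a connected domain.
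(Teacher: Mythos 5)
Your proposal takes essentially the same route as the paper: show the mixed partial derivatives $\partial_{t_i}\omega_j$ and $\partial_{t_j}\omega_i$ agree, then deduce exactness. The paper carries out the computation explicitly, obtaining $\partial_{t_i}\bigl(-\tfrac13\LS g_{\bs t,j}(W^j_{t_j})\bigr)=4\bigl(g'_{\bs t,i}(W^i_{t_i})\,g'_{\bs t,j}(W^j_{t_j})\bigr)^2/(X^j_{\bs t}-X^i_{\bs t})^4$, which is manifestly symmetric in $i\leftrightarrow j$; your sketch of the Schwarzian cocycle bookkeeping is the natural way to reach the same formula, and your observation that the admissible parameter domain is star-shaped about $\bs 0$ (so closed implies exact) makes explicit a point the paper leaves implicit.
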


\begin{proof}
By a direct but tedious calculation, we have
\begin{equation}\label{eqn::mt_def_aux}
\partial_{t_i}\left(-\frac{1}{3}\LS g_{\bs{t}, j}(W^j_{t_j})\right)=\frac{4\left( g'_{\bs{t},i}(W_{t_i}^i)  g'_{\bs{t},j}(W_{t_j}^j) \right)^2}{\left( X_{\bs{t}}^j-X_{\bs{t}}^i \right)^4}. 
\end{equation}
The RHS of~\eqref{eqn::mt_def_aux} is symmetric with respect to the exchange $i\leftrightarrow j$. Thus, 
\[\partial_{t_i}\left(-\frac{1}{3}\LS g_{\bs{t}, j}(W^j_{t_j})\right)=\partial_{t_j}\left(-\frac{1}{3}\LS g_{\bs{t}, i}(W^i_{t_i})\right).\]
Then there exists a unique potential $\blm_{\bs{t}}$ such that 
\[\partial_{t_j}\blm_{\bs{t}}=-\frac{1}{3}\LS g_{\bs{t}, j}(W^j_{t_j}), \qquad 1\le j\le p,\]
and $\blm_{\bs{0}}=0$. 
\end{proof}

\paragraph*{Brownian loop measure.}
Brownian loop measure $\blm^\mathrm{loop}$ is a $\sigma$-finite measure on planar unrooted Brownian loops
--- see ~\cite{LawlerWernerBrownianLoopsoup} for its definition and properties. 
While the total mass of $\blm^\mathrm{loop}$ is infinite, the mass on macroscopic loops is finite: 
if $\Omega$ is a domain and $K_1, K_2 \subset \overline{\Omega}$ are two disjoint compact subsets, 
then the total mass $\blm(\Omega; K_1, K_2)$ of Brownian loops that stay in $\Omega$ and intersect both $K_1$ and $K_2$ is finite.
In general, for $p\ge 2$ disjoint compact subsets $K_1, \ldots, K_p$ of $\overline{\Omega}$, we denote
\begin{align}\label{eqn::blm_def}
\blm(\Omega; K_1, \ldots, K_p) := \sum_{j=2}^p \blm^{\mathrm{loop}} \big[ \ell\subset\Omega: \ell\cap K_i\neq \emptyset\textnormal{ for at least }j \textnormal{ of the }i\in\{1, \ldots, p\} \big].
\end{align}
See~\cite{LawlerPartitionFunctionsSLE} and~\cite{PeltolaWangSLELDP} for more properties and~\cite{DubedatEulerIntegralsCommutingSLEs, DubedatCommutationSLE, KozdronLawlerMultipleSLEs, PeltolaWuGlobalMultipleSLEs} 
for alternative forms for~\eqref{eqn::blm_def}.

\begin{lemma}\label{lem::mt_blm}
The solution to~\eqref{eqn::mt_def} 
can be described in terms of Brownian loop measure as
\begin{align}\label{eqn::mt_blm}
\blm_{\bs{t}} = \blm \left(\HH;\eta_{[0,t_1]}^{1},\ldots,\eta_{[0,t_p]}^{p} \right).
\end{align}
Consequently, $\blm_{\bs{t}}$ is finite as long as $\eta_{[0,t_1]}^{1},\ldots,\eta_{[0,t_p]}^{p}$ are disjoint. 
\end{lemma}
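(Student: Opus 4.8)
The plan is to identify the partial derivatives of the Brownian loop measure quantity $\blm(\HH; \eta^1_{[0,t_1]}, \ldots, \eta^p_{[0,t_p]})$ with the right-hand side of~\eqref{eqn::mt_def}, and then invoke the uniqueness statement of Lemma~\ref{lem::mt_exact}. First I would recall the known single-curve computation: for a chordal Loewner chain $\eta^j$ parameterized by half-plane capacity with driving function $W^j$, and any fixed compact hull $A$ disjoint from $\eta^j_{[0,t_j]}$, one has the identity (due to Lawler, see~\cite{LawlerPartitionFunctionsSLE}, and used e.g.\ in~\cite{PeltolaWangSLELDP})
\[
\partial_{t_j}\,\blm^{\mathrm{loop}}\big[\ell\subset\HH : \ell\cap A\neq\emptyset,\ \ell\cap\eta^j_{[0,t_j]}\neq\emptyset\big] = -\tfrac{1}{6}\,\LS g^A_{t_j}(W^j_{t_j}),
\]
where $g^A_{t_j}$ is the map from $\HH\setminus g^j_{t_j}(A)$ onto $\HH$, i.e.\ in our notation the map $g_{\bs t,j}$ when $A=\cup_{i\neq j}\eta^i_{[0,t_i]}$. (The factor $-1/6$ versus the $-1/3$ in~\eqref{eqn::mt_def} is accounted for by the fact that in~\eqref{eqn::blm_def} every loop meeting $\ge 2$ of the sets is counted once for each excess set beyond the first; with two moving families the "at least $j$ of $i$'s" sum for $j\ge 2$ reduces to loops meeting both, counted once, but the differentiation in $t_j$ of that mass picks up both loops-just-touching-$\eta^j$-and-already-touching-others contributions — I will need to track the combinatorial factor carefully.)

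Concretely, the key steps are: (1) write $\blm(\HH;\eta^1_{[0,t_1]},\ldots,\eta^p_{[0,t_p]})$ via~\eqref{eqn::blm_def} as a sum over $j\ge 2$ of masses of loops meeting at least $j$ of the families; (2) differentiate in one time variable $t_k$, noting that only loops whose boundary-in-time is "activated" by growing $\eta^k$ contribute, and that growing $\eta^k$ infinitesimally adds, to first order, the mass of loops that touch the tip region of $\eta^k$ and meet at least one more family — this is exactly the single-curve derivative formula applied with $A = \cup_{i\neq k}\eta^i_{[0,t_i]}$ and then re-expanded over how many of the other families are met; (3) sum the combinatorial contributions and check that they telescope to exactly $-\tfrac13 \LS g_{\bs t,k}(W^k_{t_k})$, matching~\eqref{eqn::mt_def}; (4) check the initial condition $\blm_{\bs 0}=0$, which holds since a degenerate (point) hull is met by a null set of loops; (5) conclude by Lemma~\ref{lem::mt_exact} that $\blm_{\bs t}$ equals the loop-measure expression, and finiteness follows from the stated finiteness of $\blm(\Omega;K_1,\ldots,K_p)$ for disjoint compacts.

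For step (2)–(3) I would rely on the restriction/cascade property of the Brownian loop measure: loops in $\HH$ meeting $\eta^k_{[0,t_k]}$ can be pushed forward by $g^k_{t_k}$, and loops meeting it "for the first time at capacity-time $t_k$" correspond, after this map, to loops in $\HH$ near $W^k_{t_k}$; the Schwarzian $\LS g_{\bs t,k}(W^k_{t_k})$ arises as the infinitesimal loop mass near a boundary point under the conformal map $g_{\bs t,k}$, which is the standard mechanism behind the appearance of central charge $\cc$ and the $-1/3$ (equivalently $-\mathfrak{c}/2$ after accounting for $\kappa$-dependence, but here $\blm_{\bs t}$ itself is $\kappa$-free) in~\cite{LawlerPartitionFunctionsSLE, PeltolaWangSLELDP}. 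The main obstacle I anticipate is bookkeeping in step (2)–(3): correctly organizing the inclusion–exclusion so that the "at least $j$ of $i$" structure of~\eqref{eqn::blm_def}, after differentiating in $t_k$, collapses to a single clean Schwarzian term rather than a weighted sum — in particular verifying that the dependence on \emph{how many} of the other $p-1$ families a loop meets drops out, leaving only whether it meets $\eta^k$ and at least one other. I expect this to reduce, via a short combinatorial identity, to the two-set case handled in the literature, applied with the "other set" being $\cup_{i\neq k}\eta^i_{[0,t_i]}$ as a single compact.
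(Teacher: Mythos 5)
Your overall strategy---match the $t_k$-partial derivatives of the loop-measure quantity with the right-hand side of~\eqref{eqn::mt_def}, check the zero initial condition, and conclude via the uniqueness in Lemma~\ref{lem::mt_exact}---is exactly the paper's strategy, and the reduction you anticipate, ``to the two-set case with the other set being $\cup_{i\neq k}\eta^i_{[0,t_i]}$ as a single compact,'' is precisely how the paper handles the bookkeeping: from~\eqref{eqn::blm_def} one has the exact identity $\blm\left(\HH;\eta^1_{[0,t_1]},\ldots,\eta^p_{[0,t_p]}\right) = \blm\left(\HH;\eta^1_{[0,t_1]},\cup_{j\ge 2}\eta^j_{[0,t_j]}\right) + \blm\left(\HH;\eta^2_{[0,t_2]},\ldots,\eta^p_{[0,t_p]}\right)$, which can be checked loop by loop (a loop meeting $\eta^1$ and exactly $k\ge 1$ of the others carries weight $k=1+(k-1)$ on both sides; loops avoiding $\eta^1$ contribute only to the last term). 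Since the second term is independent of $t_1$, the derivative $\partial_{t_1}\blm_{\bs{t}}$ equals the two-set derivative with coefficient exactly $1$, and the known formula then gives~\eqref{eqn::mt_def}.

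The genuine problem is the constant in the two-set formula you quote and the way you propose to repair it. With the half-plane-capacity parameterization~\eqref{eqn::multitime_def}, i.e.\ $g^j_{t_j}(z)=z+2t_j/z+o(1/|z|)$, the correct statement (see \cite{LawlerSchrammWernerConformalRestriction, LawlerWernerBrownianLoopsoup, KozdronLawlerMultipleSLEs}) is $\partial_{t_j}\blm\left(\HH;\eta^j_{[0,t_j]},A\right)=-\tfrac13\,\LS g_{\bs{t},j}(W^j_{t_j})$; your $-\tfrac16$ corresponds to the other capacity convention ($\mathrm{hcap}=t$ rather than $2t$). Crucially, your plan to recover the missing factor $2$ from the ``at least $j$ of the $i$'s'' combinatorics of~\eqref{eqn::blm_def} cannot succeed: as the decomposition above shows, the multi-set overcounting is absorbed entirely into the $t_1$-independent term $\blm(\HH;\eta^2,\ldots,\eta^p)$, so the combinatorial factor multiplying the two-set derivative is $1$, not $2$. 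Carried out as written, your computation would yield $\partial_{t_k}\blm_{\bs{t}}=-\tfrac16\LS g_{\bs{t},k}(W^k_{t_k})$ and contradict~\eqref{eqn::mt_def}; the fix is to correct the cited constant (a pure normalization matter), for instance by checking that $-\tfrac13$ is the unique choice making the one-curve boundary-perturbation process $h_t'(W_t)^{\mathfrak{b}}\exp\left(\tfrac{\mathfrak{c}}{2}\Lambda_t\right)$ driftless with the parameters~\eqref{eqn::parametersbc}.
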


\begin{proof}
Denote by $\tilde{\blm}_{\bs{t}}$ the RHS of~\eqref{eqn::mt_blm}. Note that both sides of~\eqref{eqn::mt_blm} satisfy the same normalization $\blm_{\bs{0}}=\tilde{\blm}_{\bs{0}}=0$. Hence, it suffices to show that $\partial_{t_j}\blm_{\bs{t}}=\partial_{t_j} \tilde{\blm}_{\bs{t}}$ for each $j$. 
Due to symmetry, it suffices to show $\partial_{t_1}\blm_{\bs{t}}=\partial_{t_1} \tilde{\blm}_{\bs{t}}$. 
From~\eqref{eqn::blm_def} we obtain
\begin{align*}
\begin{split}
	& \blm \left(\HH;\eta_{[0,t_1]}^{1},\ldots,\eta_{[0,t_p]}^{p} \right) \\
	= & \sum_{j=1}^{p-1} \blm^{\mathrm{loop}} \big[ \ell \subset \HH: \ell\cap \eta_{[0,t_1]}^{1}\neq \emptyset, \text{ and } \ell\cap \eta_{[0,t_i]}^{i}\neq \emptyset \textnormal{ for at least }j \textnormal{ of the }i\in\{2, \ldots, p\} \big] \\
	& + \sum_{j=2}^{p-1} \blm^{\mathrm{loop}} \big[ \ell \subset \HH \setminus \eta_{[0,t_1]}^{1}: \ell\cap \eta_{[0,t_i]}^{i}\neq \emptyset \textnormal{ for at least }j \textnormal{ of the }i\in\{2, \ldots, p\} \big] \\
	= & \blm \left(\HH;\eta_{[0,t_1]}^{1},\cup_{j=2}^p \eta_{[0,t_j]}^{j} \right) + \blm \left(\HH;\eta_{[0,t_2]}^{2},\ldots, \eta_{[0,t_p]}^{p} \right).
\end{split}
\end{align*}
Using the relation between Brownian loop measure and Schwarzian derivative $\LS$ (see~\cite[Eq.~(7.3)]{LawlerSchrammWernerConformalRestriction} and~\cite[Proposition~8]{LawlerWernerBrownianLoopsoup}, or the equation before~\cite[Eq.~(10)]{KozdronLawlerMultipleSLEs}), we obtain
\begin{equation*}
	\partial_{t_1} \tilde{\blm}_{\bs{t}}=\partial_{t_1} \blm \left(\HH;\eta_{[0,t_1]}^{1},\cup_{j=2}^p \eta_{[0,t_j]}^{j} \right) =-\frac{1}{3}\LS g_{\bs{t}, 1}(W^1_{t_1}), 
\end{equation*}
as desired. 
\end{proof}

\subsection{Half-watermelon SLE and its partition functions}

\begin{definition}[Half-watermelon SLE]\label{def::halfwatermelonSLE}
Fix $\kappa\in (0,4]$ and $p\ge 2$. Let $(\Omega; \bs{x}, y) = (\Omega; x_1, \ldots, x_p, y)$ be a $(p+1)$-polygon. 
Consider curves $\eta^{j}\in\chamber(\Omega; x_j, y)$ in $\Omega$ from $x_j$ to $y$ for each $j$; and suppose $\eta^{i}\cap\eta^{j}=\{y\}$ for $i\neq j$. 
We call the law $\QQfusion{p}(\Omega; \bs{x}; y)$ of $\bs{\eta} = (\eta^{1}, \ldots, \eta^{p})$ half-$p$-watermelon $\SLE_{\kappa}$ in $(\Omega; \bs{x}, y)$ from $\bs{x}$ to $y$ if 
it satisfies the resampling property:  
for each $j\in \{1, \ldots, p\}$, the conditional law of $\eta^{j}$ given $\{\eta^{i}: i\neq j\}$ is chordal $\SLE_{\kappa}$ from $x_j$ to $y$ 
in the connected component $\Omega_j$ of $\Omega\setminus\cup_{i\neq j}\eta^{i}$ having $x_j$ on its boundary.
In particular, when $(\Omega; \bs{x}, y)=(\HH; \bs{x}, \infty)$, we denote by $\QQ_{\shuffle_p}(\bs{x})$ the law of half-$p$-watermelon $\SLE_{\kappa}$ in $(\HH; \bs{x}, \infty)$. 
\end{definition}

The existence and uniqueness of half-$p$-watermelon $\SLE_{\kappa}$ was argued in~\cite[Section~4]{MillerSheffieldIG2}. For half-$p$-watermelon $\SLE_{\kappa}$ $\bs{\eta} = (\eta^{1}, \ldots, \eta^{p}) \sim \QQfusion{p}(\Omega; \bs{x}, y)$, 
the marginal law of each $\eta^{j}$ is chordal $\SLE_{\kappa}(2, \ldots, 2)$ in $\Omega$ from $x_j$ to $y$ with force points $(x_1, \ldots, x_{j-1}; x_{j+1}, \ldots, x_p)$.  
The partition function for half-watermelon SLE in $(\HH; \bs{x}, \infty)$ is given by
\begin{align} \label{eqn::LZfusion_H}
\LZ_{\shuffle_p}(\bs x) := \; & \prod_{1\leq i<j\leq p}(x_{j}-x_{i})^{\frac{2}{\kappa}}. 
\end{align}

\begin{lemma}\label{lem::halfwatermelon_mart}
Fix $\kappa\in (0,4], p\ge 2$ and $\bs{x}=(x_1, \ldots, x_p)\in\LX_p$. 
For each $j\in\{1, \ldots, p\}$, let $\eta^j$ be $\SLE_{\kappa}$ in $(\HH; x_j,\infty)$ and let $\PP_p$ be the probability measure on $\bs{\eta}=(\eta^1, \ldots, \eta^p)$ under which the curves are independent. We parameterize $\bs{\eta}$ by $p$-time-parameter $\bs{t}$ and let $\bs{X}_{\bs{t}}$ denote the driving function as in~\eqref{eqn::multitime_driving}. Then the process 
\begin{equation}\label{eqn::halfwatermelon_multitime_mart}
M_{\bs{t}}(\LZ_{\shuffle_p})=\one_{\LE_{\emptyset}(\bs{\eta}_{\bs{t}})} \prod_{j=1}^p g'_{\bs{t},j}(W^j_{t_j})^{\mathfrak{b}}\times \exp\left(\frac{\mathfrak{c}}{2}\blm_{\bs{t}}\right)\times \LZ_{\shuffle_p}(\bs{X}_{\bs{t}})
\end{equation}
is $p$-time-parameter local martingale with respect to $\PP_p$ where $\LE_{\emptyset}(\bs{\eta}_{\bs{t}})$ and $\blm_{\bs{t}}$ are defined in the same way as in Proposition~\ref{prop::multitime_mart}. Moreover, the law of $\PP_p$ weighted by $M_{\bs{t}}(\LZ_{\shuffle_p})$ is the same as half-$p$-watermelon $\SLE_{\kappa}$ in $(\HH; \bs{x}, \infty)$ when restricted to the event $\LE_{\emptyset}(\bs{\eta}_{\bs{t}})$. 
\end{lemma}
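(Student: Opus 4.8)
The plan is to verify the two assertions of the lemma in turn, using Proposition~\ref{prop::multitime_mart} as the engine for the first and the resampling characterization of half-watermelon $\SLE_{\kappa}$ (Definition~\ref{def::halfwatermelonSLE}) together with a single-curve $\SLE_{\kappa}(2,\ldots,2)$ computation for the second. For the martingale property, by Proposition~\ref{prop::multitime_mart} it suffices to check that $\LZ_{\shuffle_p}(\bs x) = \prod_{1\le i<j\le p}(x_j-x_i)^{2/\kappa}$ is smooth and positive on $\LX_p$ (clear) and solves the system of BPZ equations~\eqref{eqn::BPZ_general} with $\mathfrak b=\frac{6-\kappa}{2\kappa}$. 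This is a direct computation: writing $\log\LZ_{\shuffle_p} = \frac{2}{\kappa}\sum_{i<j}\log(x_j-x_i)$, one has $\partial_j\log\LZ_{\shuffle_p} = \frac{2}{\kappa}\sum_{\ell\ne j}\frac{1}{x_j-x_\ell}$, hence
\[
\frac{\partial_j^2\LZ_{\shuffle_p}}{\LZ_{\shuffle_p}} = \left(\frac{2}{\kappa}\sum_{\ell\ne j}\frac{1}{x_j-x_\ell}\right)^2 - \frac{2}{\kappa}\sum_{\ell\ne j}\frac{1}{(x_j-x_\ell)^2},
\qquad
\frac{\partial_\ell\LZ_{\shuffle_p}}{\LZ_{\shuffle_p}} = \frac{2}{\kappa}\sum_{m\ne \ell}\frac{1}{x_\ell-x_m}.
\]
Substituting into the bracket in~\eqref{eqn::BPZ_general}, the $\frac{\kappa}{2}(\partial_j\log\LZ)^2$ term produces a double sum over pairs $\ell,m\ne j$, which splits into the diagonal $\ell=m$ contribution $\frac{2}{\kappa}\sum_{\ell\ne j}\frac{1}{(x_j-x_\ell)^2}$ and the off-diagonal contribution; the diagonal cancels the $\frac{\kappa}{2}\cdot(-\frac{2}{\kappa})\sum\frac1{(x_j-x_\ell)^2}$ term, while the off-diagonal sum over distinct $\ell,m\ne j$ combines with $\sum_{\ell\ne j}\frac{2}{x_\ell-x_j}\partial_\ell\log\LZ$ via the partial-fraction identity $\frac{1}{(x_j-x_\ell)(x_j-x_m)}+\frac{1}{(x_\ell-x_j)(x_\ell-x_m)}+\frac{1}{(x_m-x_j)(x_m-x_\ell)}=0$, leaving only terms of the form $\frac{c}{(x_j-x_\ell)^2}$; collecting the constant one finds it equals $2\mathfrak b = \frac{6-\kappa}{\kappa}$, precisely cancelling the $-\frac{2\mathfrak b}{(x_\ell-x_j)^2}$ term. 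Thus~\eqref{eqn::BPZ_general} holds and $M_{\bs t}(\LZ_{\shuffle_p})$ is a $p$-time-parameter local martingale.

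For the second assertion, I would argue that weighting $\PP_p$ by $M_{\bs t}(\LZ_{\shuffle_p})$ and restricting to $\LE_{\emptyset}(\bs\eta_{\bs t})$ yields a law satisfying the defining resampling property. The key observation is that the $p$-time-parameter martingale, when one freezes $t_i$ for $i\ne j$ and lets only $t_j$ vary, reduces — by the conformal covariance packaged in~\eqref{eqn::multitime_mart_general} and the loop-measure identity of Lemma~\ref{lem::mt_blm} — to the standard single-curve Radon--Nikodym derivative that turns chordal $\SLE_{\kappa}$ from $x_j$ to $\infty$ into chordal $\SLE_{\kappa}(2,\ldots,2)$ with force points at $x_1,\ldots,x_{j-1},x_{j+1},\ldots,x_p$, which is itself (by the coordinate change $g^j_{t_j}$ and conformal invariance) chordal $\SLE_{\kappa}$ from $x_j$ to $\infty$ in the slit domain $\HH\setminus\cup_{i\ne j}\eta^i$. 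Concretely, under the weighted law the conditional law of $\eta^j$ given the other curves is obtained from $\SLE_{\kappa}$ in $(\HH;x_j,\infty)$ by the Girsanov weight $\prod g'_{\bs t,j}(W^j)^{\mathfrak b}\exp(\frac{\mathfrak c}{2}\blm_{\bs t})\LZ_{\shuffle_p}(\bs X_{\bs t})$ with the other $t_i$ maximal; since $\LZ_{\shuffle_p}$ is exactly the partition function~\eqref{eqn::LZfusion_H} and the $\mathfrak b$-power and loop term implement the conformal map $g_{\bs t,j}$ and the Brownian-loop correction, this Girsanov weight is precisely the one identifying the conditional law with chordal $\SLE_{\kappa}$ in the component $\Omega_j$ of $\HH\setminus\cup_{i\ne j}\eta^i$ containing $x_j$ on its boundary. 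Having verified the resampling property for every $j$, uniqueness of half-$p$-watermelon $\SLE_{\kappa}$ (cited from~\cite[Section~4]{MillerSheffieldIG2}) identifies the weighted measure with $\QQ_{\shuffle_p}(\bs x)$ on $\LE_{\emptyset}(\bs\eta_{\bs t})$.

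I expect the BPZ verification to be routine once the partial-fraction bookkeeping is organized, so the \emph{main obstacle} is the second part: making rigorous the passage from the $p$-time-parameter martingale to the single-curve Radon--Nikodym derivative in the slit domain, i.e.\ checking that freezing all but one time coordinate and sending those frozen coordinates to their terminal values recovers exactly the chordal $\SLE_{\kappa}$-in-$\Omega_j$ weighting. This requires care with (i) the compatibility of the half-plane-capacity parameterizations of the individual curves with the parameterization of $\eta^j$ as a curve in the slit domain, handled via the cascade relation $g_{\bs t}=g_{\bs t,j}\circ g^j_{t_j}$; (ii) the decomposition of $\blm_{\bs t}$ into the part counting loops hitting $\eta^j$ and those avoiding it, exactly as in the proof of Lemma~\ref{lem::mt_blm}; and (iii) verifying that the resulting weight on the driving function of $\eta^j$ is a genuine (true) martingale, not merely a local one, on the relevant event, so that Girsanov applies — this follows because on $\LE_{\emptyset}(\bs\eta_{\bs t})$ the quantities $g'_{\bs t,j}$, $\blm_{\bs t}$ and $\LZ_{\shuffle_p}(\bs X_{\bs t})$ are bounded by a localization/stopping argument up to the first time two curves meet. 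Once these points are in place the identification is immediate.
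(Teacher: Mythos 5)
Your verification of the local-martingale assertion is correct and is the same as the paper's: the paper also simply notes that $\LZ_{\shuffle_p}$ satisfies~\eqref{eqn::BPZ_general} ``by direct calculation'' and then invokes Proposition~\ref{prop::multitime_mart}. Your partial-fraction bookkeeping (diagonal cancellation giving $2\mathfrak b=\frac{6-\kappa}{\kappa}$, off-diagonal cancellation via the three-term identity) is a correct account of that computation.

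For the second assertion, you take a genuinely different route than the paper. The paper computes the driving SDE~\eqref{eqn::halfwatermelon_mart_aux2} via Girsanov, observes the cancellation $\kappa\partial_j\log\LZ_{\shuffle_p}=\sum_{\ell\neq j}\frac{2}{x_j-x_\ell}$, and from the resulting SDE reads off the joint law \emph{iteratively}: the marginal of $\eta^1$ is $\SLE_\kappa(2,\ldots,2)$, and the conditional of $\eta^{j+1}$ given $\eta^1,\ldots,\eta^j$ is $\SLE_\kappa(2,\ldots,2;2,\ldots,2)$ in the partial slit domain with force points \emph{at the tips} $\eta^1_{t_1},\ldots,\eta^j_{t_j}$ of the grown curves and at the starting points $x_{j+2},\ldots,x_p$ of the remaining ones; uniqueness then pins the law. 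You instead try to verify the resampling property of Definition~\ref{def::halfwatermelonSLE} directly, which is a legitimate alternative.

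However, there is a genuine error in your intermediate step. You assert that $\SLE_\kappa(2,\ldots,2)$ in $(\HH;x_j,\infty)$ with force points at $x_1,\ldots,x_{j-1},x_{j+1},\ldots,x_p$ ``is itself\ldots chordal $\SLE_\kappa$ from $x_j$ to $\infty$ in the slit domain $\HH\setminus\cup_{i\ne j}\eta^i$.'' This is false. The former is the \emph{marginal} law of $\eta^j$ under $\QQ_{\shuffle_p}(\bs{x})$ --- the paper states exactly this right after Definition~\ref{def::halfwatermelonSLE} --- averaging over the other $p-1$ curves and living in $\HH$; the latter is the \emph{conditional} law given the realized curves $\eta^i$, $i\neq j$, and lives in a random slit domain. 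They are different measures, and no coordinate change or conformal invariance identifies them. Moreover, if you freeze $t_i>0$ for $i\neq j$ the one-variable weight you extract from $M_{\bs t}(\LZ_{\shuffle_p})$ has force-point positions at the images of the tips $\eta^i_{t_i}$, not at the original $x_i$; only freezing $t_i=0$ recovers the marginal $\SLE_\kappa(2,\ldots,2)$ with force points at $x_i$. Your concluding sentence (the $\mathfrak b$-power, loop term, and $\LZ_{\shuffle_p}$ ratio constitute the boundary-perturbation weight for $\Omega_j$) points in the right direction; the argument should go there directly, omitting the incorrect $\SLE_\kappa(2,\ldots,2)$-with-starting-point-force-points detour, and one must carefully use the cascade relation $g_{\bs t}=g_{\bs t,j}\circ g^j_{t_j}$ and the loop-measure decomposition as in Lemma~\ref{lem::mt_blm} to identify that weight with the one-sided restriction factor. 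As written, the chain of identifications does not close.
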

\begin{proof}
One may check by direct calculation that $\LZ_{\shuffle_p}$ satisfies the system of BPZ equations~\eqref{eqn::BPZ_general}. Thus, Proposition~\ref{prop::multitime_mart} guarantees that $M_{\bs{t}}(\LZ_{\shuffle_p})$ is $p$-time-parameter local martingale with respect to $\PP_p$. 

Denote by $\PP(\LZ_{\shuffle_p})$ the measure obtained by weighting $\PP_p$ by $M_{\bs{t}}(\LZ_{\shuffle_p})$. It remains to show that $\PP(\LZ_{\shuffle_p})$ is the same as $\QQ_{\shuffle_p}(\bs{x})$ the law of half-$p$-watermelon $\SLE_{\kappa}$ in $(\HH; \bs{x}, \infty)$. From~\eqref{eqn::multitime_mart_aux4} we have
\begin{equation} \label{eqn::halfwatermelon_mart_aux1}
	\frac{\ud M_{\bs{t}}(\LZ_{\shuffle_p})}{M_{\bs{t}}(\LZ_{\shuffle_p})}=  \sum_{j=1}^p \left( \frac{\partial_j \LZ_{\shuffle_p}(\bs{X_t})}{\LZ_{\shuffle_p}(\bs{X_t})} g'_{\bs{t},j}(W_{t_j}^j) + \mathfrak{b} \frac{g''_{\bs{t},j}(W_{t_j}^j)}{g'_{\bs{t},j}(W_{t_j}^j)} \right) \ud W_{t_j}^j.
\end{equation}
Thus Girsanov's theorem implies
\begin{align} \label{eqn::halfwatermelon_mart_aux2}
\begin{split}
	\ud X_{\bs{t}}^j=& \sqrt{\kappa} g'_{\bs{t},j}(W_{t_j}^j) \ud \tilde{B}_{t_j}^j + \kappa \partial_j (\log \LZ_{\shuffle_p}) (\bs{X_t}) \left( g'_{\bs{t},j}(W_{t_j}^j) \right)^2 \ud t_j + \sum_{\ell\neq j} \frac{2}{X_{\bs{t}}^j-X_{\bs{t}}^\ell} \left( g'_{\bs{t},\ell}(W_{t_\ell}^\ell) \right)^2 \ud t_\ell \\
	=& \sqrt{\kappa} g'_{\bs{t},j}(W_{t_j}^j) \ud \tilde{B}_{t_j}^j + \sum_{\ell\neq j} \frac{2}{X_{\bs{t}}^j-X_{\bs{t}}^\ell} \left(\left( g'_{\bs{t},j}(W_{t_j}^j) \right)^2 \ud t_j + \left( g'_{\bs{t},\ell}(W_{t_\ell}^\ell) \right)^2 \ud t_\ell \right),
\end{split}
\end{align}
where $\tilde{B}_{t_1}^1, \ldots ,\tilde{B}_{t_p}^p$ are independent Brownian motions under $\PP(\LZ_{\shuffle_p})$. 
Taking into account the variation of the capacity parameterization, under $\PP(\LZ_{\shuffle_p})$, 
\begin{itemize}
	\item the marginal law of $\eta_{[0,t_1]}^1$ is chordal $\SLE_{\kappa}(2,\ldots,2)$ in $\HH$ from $x_1$ to $\infty$ with force points $(x_2,\ldots,x_p)$;
	\item for each $j\in \{1,\ldots,p-1\}$, conditionally on $(\eta_{[0,t_1]}^1,\ldots,\eta_{[0,t_j]}^j)$, the law of the curve $\eta_{[0,t_{j+1}]}^{j+1}$ is a chordal $\SLE_{\kappa}(2,\ldots,2;2,\ldots,2)$ in $\HH\setminus \cup_{i=1}^j \eta_{[0,t_i]}^i$ from $x_{j+1}$ to $\infty$ with force points $(\eta_{t_1}^1,\ldots,\eta_{t_j}^j;x_{j+2},\ldots,x_p)$.
\end{itemize}
Since the law of half-$p$-watermelon is unique, and it is the same as the law of $\bs{\eta}$ under $\PP(\LZ_{\shuffle_p})$ as desired.
\end{proof}

\subsection{Multichordal SLE and pure partition functions}
\label{subsec::pre_NSLE}

To describe the Loewner chain of multichordal SLE, we introduce pure partition functions. These are the recursive collection $\{\PartF_{\alpha} \colon \alpha \in \sqcup_{n\geq 0} \LP_n\}$ 
of functions $\PartF_{\alpha} \colon \LX_{2n}\to\R$
uniquely determined by the following four properties: 
\begin{itemize}
	\item BPZ equations: for all $ j \in \{1,\ldots,2n\}$, 
	\begin{align}\label{eqn::PDE}
		\left[ 
		\frac{\kappa}{2} \partial_j^2
		+ \sum_{\ell\neq j} \left(\frac{2}{(x_{\ell}-x_{j})} \partial_{\ell}
		- \frac{2\mathfrak{b}}{(x_{\ell}-x_{j})^{2}}\right)\right]
		\PartF_\alpha(x_1,\ldots,x_{2n}) =  0.
	\end{align}
	\item M\"{o}bius covariance: for all M\"obius maps $\varphi$ of the upper half-plane $\HH$ such that $\varphi(x_{1}) < \cdots < \varphi(x_{2n})$, we have
	\begin{align*}
		\PartF_\alpha(x_{1},\ldots,x_{2n}) = 
		\prod_{j=1}^{2n} \varphi'(x_{j})^{\mathfrak{b}} 
		\times \PartF_\alpha(\varphi(x_{1}),\ldots,\varphi(x_{2n})).
	\end{align*}
	\item Asymptotics: with $\PartF_{\emptyset} \equiv 1$ for the empty link pattern $\emptyset \in \LP_0$, the collection $\{\PartF_{\alpha} \colon \alpha\in\LP_n\}$ satisfies the following recursive asymptotics property. Fix $n\ge 1$ and $j \in \{1,2, \ldots, 2n-1 \}$. Then, we have
	\begin{align*}
		\lim_{x_j,x_{j+1}\to\xi} \frac{\PartF_{\alpha}(x_1,\ldots, x_{2n})}{ (x_{j+1}-x_j)^{-2\mathfrak{b}} }= 
		\begin{cases}
			\PartF_{\alpha/\{j,j+1\}}(x_1, \ldots, x_{j-1}, x_{j+2}, \ldots, x_{2n}), 
			& \quad \text{if }\{j, j+1\}\in\alpha , \\
			0,
			& \quad \text{if }\{j, j+1\} \not\in \alpha ,
		\end{cases}
	\end{align*}
	where $\xi \in (x_{j-1}, x_{j+2})$ (with the convention that $x_0 = -\infty$ and  $x_{2n+1} = +\infty$), and $\alpha/\{k,l\}$ denotes the link pattern in $\LP_{n-1}$ obtained by removing $\{k,l\}$ from $\alpha$ and then relabeling the remaining indices so that they are the first $2(n-1)$ positive integers. 
	\item The functions satisfy the following power-law bound: 
	\begin{align}\label{eqn::PLB_strong}
		0<\PartF_{\alpha}(x_1, \ldots, x_{2n}) \le\prod_{\{k,l\}\in\alpha}|x_k-x_l|^{-2\mathfrak{b}}, \quad \text{for all }x_1<\cdots<x_{2n}. 
	\end{align}
\end{itemize}

The uniqueness when $\kappa\in (0,8)$ of such collection of functions were proved in~\cite{FloresKlebanPDE2} (with a weaker version of power-law bound). The existence is proved in~\cite{FloresKlebanPDE3, KytolaPeltolaPurePartitionFunctions, PeltolaWuGlobalMultipleSLEs, WuHyperSLE, AngHoldenSunYu2023, FengLiuPeltolaWu2024}. In this article, we focus on $\kappa\le 4$.

\begin{lemma}\label{lem::ppf_mart}
Fix $\kappa\in (0,4], n\ge 2$ and $\alpha\in\LP_n$. Fix $\bs{x}=(x_1, \ldots, x_{2n})\in\LX_{2n}$.  
For each $j\in\{1, \ldots, 2n\}$, let $\eta^j$ be $\SLE_{\kappa}$ in $(\HH; x_j,\infty)$ and let $\PP_{2n}$ be the probability measure on $\bs{\eta}=(\eta^1, \ldots, \eta^{2n})$ under which the curves are independent. We parameterize $\bs{\eta}$ by $2n$-time-parameter $\bs{t}$ and let $\bs{X}_{\bs{t}}$ denote the driving function as in~\eqref{eqn::multitime_driving} with $p=2n$. Then the process 
\begin{equation}\label{eqn::ppf_multitime_mart}
M_{\bs{t}}(\LZ_{\alpha})=\one_{\LE_{\emptyset}(\bs{\eta}_{\bs{t}})} \prod_{j=1}^{2n} g'_{\bs{t},j}(W^j_{t_j})^{\mathfrak{b}}\times \exp\left(\frac{\mathfrak{c}}{2}\blm_{\bs{t}}\right)\times \LZ_{\alpha}(\bs{X}_{\bs{t}})
\end{equation}
is $2n$-time-parameter local martingale with respect to $\PP_{2n}$ where $\LE_{\emptyset}(\bs{\eta}_{\bs{t}})$ and $\blm_{\bs{t}}$ are defined in the same way as in Proposition~\ref{prop::multitime_mart} where $p=2n$. Moreover, the law of $\PP_{2n}$ weighted by $M_{\bs{t}}(\LZ_{\alpha})$ is the same as multichordal $\SLE_{\kappa}$ in $(\HH; \bs{x})$ associated to $\alpha$ when restricted to the event $\LE_{\emptyset}(\bs{\eta}_{\bs{t}})$. 
\end{lemma}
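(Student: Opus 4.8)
The plan is to apply Proposition~\ref{prop::multitime_mart} with $\LZ = \LZ_{\alpha}$ and $p = 2n$, and then identify the reweighted measure. First I would check that the pure partition function $\LZ_{\alpha}$ satisfies the hypotheses of Proposition~\ref{prop::multitime_mart}: it is positive (hence a legitimate weight) and smooth, and it solves the system of BPZ equations~\eqref{eqn::BPZ_general}. Both of these are built into the defining properties of $\{\PartF_{\alpha}\}$ listed in Section~\ref{subsec::pre_NSLE} --- positivity and smoothness follow from the power-law bound~\eqref{eqn::PLB_strong} together with hypoellipticity, and the BPZ equations~\eqref{eqn::PDE} are literally~\eqref{eqn::BPZ_general} with the parameters $\mathfrak b, \mathfrak c$ of~\eqref{eqn::parametersbc}. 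Proposition~\ref{prop::multitime_mart} then immediately gives that $M_{\bs t}(\LZ_{\alpha})$ is a $2n$-time-parameter local martingale under $\PP_{2n}$.

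It remains to identify the law of $\PP_{2n}$ reweighted by $M_{\bs t}(\LZ_{\alpha})$ as multichordal $\SLE_{\kappa}$ associated to $\alpha$, restricted to the event $\LE_{\emptyset}(\bs\eta_{\bs t})$. I would follow the same strategy as in the proof of Lemma~\ref{lem::halfwatermelon_mart}: starting from~\eqref{eqn::multitime_mart_aux4}, the vanishing of the drift terms (which holds since $\LZ_{\alpha}$ solves BPZ) shows that $\ud M_{\bs t}(\LZ_{\alpha})/M_{\bs t}(\LZ_{\alpha})$ reduces to a sum of stochastic integrals against $\ud W^j_{t_j}$ with integrand $\partial_j(\log\LZ_{\alpha})(\bs X_{\bs t})\,g'_{\bs t,j}(W^j_{t_j}) + \mathfrak b\, g''_{\bs t,j}(W^j_{t_j})/g'_{\bs t,j}(W^j_{t_j})$. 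Girsanov's theorem then gives, under the reweighted measure $\PP(\LZ_{\alpha})$, a system of SDEs for $\bs X_{\bs t}$ of the form analogous to~\eqref{eqn::halfwatermelon_mart_aux2} but with $\LZ_{\shuffle_p}$ replaced by $\LZ_{\alpha}$. Reading off the resulting drift coefficients and using the recursive asymptotics of $\LZ_{\alpha}$ (the third defining property), one sees that along the $j$-th ``slice'' the curve $\eta^j$ is a chordal $\SLE_{\kappa}$ weighted by the appropriate partition function in the complement of the other curves --- this is precisely the resampling/conditional description appearing in Definition~\ref{def::globalnSLE}. Finally, invoking the uniqueness of multichordal $\SLE_{\kappa}$ associated to $\alpha$ (valid for $\kappa\in(0,4]$, as recalled after Definition~\ref{def::globalnSLE}) identifies $\PP(\LZ_{\alpha})$ with $\QQ_{\alpha}(\bs x)$ on $\LE_{\emptyset}(\bs\eta_{\bs t})$.

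The main obstacle, and the point that requires more care than in Lemma~\ref{lem::halfwatermelon_mart}, is matching the marginal structure under $\PP(\LZ_{\alpha})$ to the resampling characterization rather than to an explicit $\SLE_{\kappa}(2,\ldots,2)$ description: for a general link pattern $\alpha$ the function $\LZ_{\alpha}$ is not an explicit product, so one cannot literally write the conditional laws as $\SLE_{\kappa}(\bs\rho)$ processes with prescribed force points. Instead I would argue at the level of the commutation/resampling property directly --- fixing all but the $j$-th curve, the reweighted conditional law of $\eta^j$ is chordal $\SLE_{\kappa}$ reweighted by a one-variable partition function which, by the Markov property of $\LZ_{\alpha}$ under sending the other marked points to the boundary of $\Omega_j$, is the chordal partition function in $(\Omega_j; x_{a_j}, x_{b_j})$; this makes the conditional law chordal $\SLE_{\kappa}$ in $\Omega_j$, which is exactly the resampling property. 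One should also be mildly careful that, because $\LZ_{\alpha}$ has zeros on the boundary of $\LX_{2n}$ (where marked points collide), the Girsanov change of measure and the identification are carried out on the open event $\LE_{\emptyset}(\bs\eta_{\bs t})$ where everything stays nondegenerate, exactly as in the statement.
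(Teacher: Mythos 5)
The first half of your argument matches the paper exactly: you verify that $\LZ_{\alpha}$ is positive, smooth, and solves the BPZ system \eqref{eqn::PDE} (which is \eqref{eqn::BPZ_general} with the parameters \eqref{eqn::parametersbc}), apply Proposition~\ref{prop::multitime_mart} to get the local martingale property, and use Girsanov to obtain the analogue of \eqref{eqn::halfwatermelon_mart_aux2} with $\LZ_{\shuffle_p}$ replaced by $\LZ_{\alpha}$; this is precisely \eqref{eqn::ppf_mart_aux1}--\eqref{eqn::ppf_mart_aux2} in the paper's proof.

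The gap is in the identification of the reweighted measure $\PP(\LZ_{\alpha})$ with $\QQ_{\alpha}(\bs{x})$. What the Girsanov computation yields is only a \emph{local} description: the driving functions of the initial segments $\eta^j_{[0,t_j]}$ solve Loewner equations with drift $\kappa\,\partial_j\log\LZ_{\alpha}$, i.e.\ $\PP(\LZ_{\alpha})$ is a local multichordal SLE. The lemma asserts equality with the \emph{global} object of Definition~\ref{def::globalnSLE}, which is characterized by a resampling property of complete curves. Your bridge between the two --- ``fixing all but the $j$-th curve, the reweighted conditional law of $\eta^j$ is chordal $\SLE_{\kappa}$ reweighted by a one-variable partition function which, by the Markov property of $\LZ_{\alpha}$, is the chordal partition function in $(\Omega_j;x_{a_j},x_{b_j})$'' --- is not justified by anything at your disposal: the defining properties of the pure partition functions (BPZ, M\"obius covariance, cascade asymptotics, power-law bound) contain no such Markov/boundary-perturbation property, and the cascade asymptotics describe collisions of neighboring marked points, not conditioning on completed curves. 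Moreover, conditioning on the \emph{complete} other curves is not accessible from the finite-time local martingale: one must let the other time parameters run to the curves' full lifetimes, know that under $\PP(\LZ_{\alpha})$ the $2n$ curve tips indeed pair up according to $\alpha$, and control the change of measure in that limit. This local-to-global step is exactly the nontrivial input, and the paper closes it by citation: \cite[Proposition~4.10]{PeltolaWuGlobalMultipleSLEs} identifies the Loewner chains of the global multichordal SLE as the $\LZ_{\alpha}$-driven local ones, and \cite[Theorem~1.3]{PeltolaWuGlobalMultipleSLEs} shows that local and global multichordal SLE coincide for $\kappa\le 4$. Note also that your final appeal to ``uniqueness of multichordal SLE'' presupposes that $\PP(\LZ_{\alpha})$ satisfies the resampling property, which is the very point at issue, so as written the argument is circular there. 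Either invoke the local-to-global equivalence as the paper does, or supply an actual proof of it; the rest of your proposal is sound.
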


\begin{proof}
As $\LZ_{\alpha}$ satisfies BPZ equations~\eqref{eqn::PDE}, Proposition~\ref{prop::multitime_mart} guarantees that $M_{\bs{t}}(\LZ_{\alpha})$ is $2n$-time-parameter local martingales with respect to $\PP_{2n}$. 

Denote by $\PP(\LZ_{\alpha})$ the measure obtained by weighting $\PP_{2n}$ by $M_{\bs{t}}(\LZ_{\alpha})$. It remains to show that $\PP(\LZ_{\alpha})$ is the same as $\QQ_{\alpha}(\bs{x})$ the law of multichordal $\SLE_{\kappa}$ in $(\HH; \bs{x})$ associated to $\alpha$. 
From~\eqref{eqn::multitime_mart_aux4} we have
\begin{equation} \label{eqn::ppf_mart_aux1}
	\frac{\ud M_{\bs{t}}(\LZ_{\alpha})}{M_{\bs{t}}(\LZ_{\alpha})}=  \sum_{j=1}^p \left( \frac{\partial_j \LZ_{\alpha}(\bs{X_t})}{\LZ_{\alpha}(\bs{X_t})} g'_{\bs{t},j}(W_{t_j}^j) + \mathfrak{b} \frac{g''_{\bs{t},j}(W_{t_j}^j)}{g'_{\bs{t},j}(W_{t_j}^j)} \right) \ud W_{t_j}^j.
\end{equation}
Thus Girsanov's theorem implies
\begin{align} \label{eqn::ppf_mart_aux2}
	\begin{split}
		\ud X_{\bs{t}}^j=& \sqrt{\kappa} g'_{\bs{t},j}(W_{t_j}^j) \ud \tilde{B}_{t_j}^j + \kappa \partial_j (\log \LZ_{\alpha}) (\bs{X_t}) \left( g'_{\bs{t},j}(W_{t_j}^j) \right)^2 \ud t_j + \sum_{\ell\neq j} \frac{2}{X_{\bs{t}}^j-X_{\bs{t}}^\ell} \left( g'_{\bs{t},\ell}(W_{t_\ell}^\ell) \right)^2 \ud t_\ell,
	\end{split}
\end{align}
where $\tilde{B}_{t_1}^1, \ldots ,\tilde{B}_{t_p}^p$ are independent Brownian motions under $\PP(\LZ_{\alpha})$. 
The Loewner chains for each curves in the multichordal SLEs are given in~\cite[Proposition~4.10]{PeltolaWuGlobalMultipleSLEs}. Taking into account the variation of the capacity parameterization, under $\PP(\LZ_{\alpha})$, by~\eqref{eqn::ppf_mart_aux2}, we can obtain the marginal law and conditional law of each curves under $\PP(\LZ_{\alpha})$ similarly as in the proof of Lemma~\ref{lem::halfwatermelon_mart}. Thus we conclude from~\eqref{eqn::ppf_mart_aux2} that the law of $\bs{\eta_t}$ under $\PP(\LZ_{\alpha})$ is local multichordal $\SLE$ in $(\HH; \bs{x})$ associated to $\alpha$. It was proved in~\cite[Theorem~1.3]{PeltolaWuGlobalMultipleSLEs} that global multiple $\SLE$ in $(\HH; \bs{x})$ associated to $\alpha$ coincides with local multichordal $\SLE$ in $(\HH; \bs{x})$ associated to $\alpha$. Thus $\PP(\LZ_{\alpha})$ is the same as $\QQ_{\alpha}(\bs{x})$ as desired.
\end{proof}

\section{Proof of Theorem~\ref{thm::transitiondensity} and Proposition~\ref{prop::cvg_DysonBM}}
\label{sec::commontime_mart}
\subsection{Common-time-parameter}
\label{subsec::commontime_mart}

Fix $p\ge 2$ and $\bs{x}=(x_1, \ldots, x_p)\in\LX_p$. Consider a $p$-tuple $\bs{\eta}_{\bs{t}}=(\eta^1_{t_1}, \ldots, \eta^p_{t_p})$ of simple curves in $\HH$, parameterized by $\bs{t}=(t_1, \ldots, t_p)\in[0,\infty)^p$, such that $\eta^j_0=x_j$ for each $j$ and the segments $\{\eta^j_{[0,t_j]}\}_{1\le j\le p}$ are disjoint. Let $g_{\bs{t}}$ be the  conformal map from $\HH\setminus\cup_{j=1}^p \eta^j_{[0,t_j]}$ onto $\HH$ with 
\begin{equation}
g_{\bs{t}}(z)=z+\frac{\aleph_{\bs{t}}}{z}+o(1/|z|), \qquad \text{as }z\to\infty.
\end{equation}
We say that the $p$-tuple $\bs{\eta}_{\bs{t}}$ have common-time-parameter $t$ if 
\begin{equation}\label{eqn::commontime_def}
\partial_{t_j}\aleph_{(t, \ldots, t)}=2,\qquad \text{for all }j\in\{1, \ldots, p\}.
\end{equation}
The existence of common-time-parameter is proved in~\cite[Lemma~3.2]{HealeyLawlerNSidedRadialSLE} for the radial coordinates. The existence in chordal coordinates can be proved similarly, see Lemma~\ref{lem::commontime_existence}. 

\begin{lemma}\label{lem::commontime_existence}
Common-time-parameter exists for continuous disjoint simple curves $\bs{\eta}_{\bs{t}}=(\eta^1_{t_1}, \ldots, \eta^p_{t_p})$. Moreover, 
\begin{align}\label{eqn::commontime_control}
t\le t_j(t)\le pt, \qquad 1\le j\le p. 
\end{align}
\end{lemma}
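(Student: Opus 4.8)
The plan is to follow the strategy of~\cite[Lemma~3.2]{HealeyLawlerNSidedRadialSLE}, recasting the construction of the common-time-parameter as an ODE flow on the $p$-time-parameter space. For the $p$-tuple $\bs{\eta}$ parameterized by $p$-time-parameter $\bs{t}$, recall from Proposition~\ref{prop::multitime_mart} that $g_{\bs{t}} = g_{\bs{t},j}\circ g^j_{t_j}$, and that $g^j_{t_j}(z) = z + 2t_j/z + o(1/|z|)$. Composing the expansions, one sees that $\aleph_{\bs{t}} = 2\sum_{j=1}^p t_j$; in particular $\aleph_{\bs{t}}$ is linear in $\bs{t}$ with $\partial_{t_j}\aleph_{\bs{t}} = 2$ for every $j$ \emph{regardless} of the relative speeds. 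Hence the naive reading of~\eqref{eqn::commontime_def} is automatic, and what is actually meant — consistent with the radial case — is that one reparameterizes each curve $\eta^j$ individually so that the \emph{increment of total capacity contributed by curve $j$} equals the increment of the common clock $t$; equivalently, one wants a time-change $t\mapsto \bs{t}(t) = (t_1(t),\ldots,t_p(t))$ along which the ``local'' half-plane capacity of each $\eta^j$ as seen from the other curves grows at unit speed. Concretely, writing $h_j(\bs{t})$ for the hcap of $g^j_{t_j}(\eta^j_{[0,t_j]})$ after mapping out the other curves — i.e. the coefficient governing $\partial_{t_j}$ of the capacity of the $j$-th piece inside the common domain, which by the chain rule equals $g'_{\bs{t},j}(W^j_{t_j})^2>0$ — one seeks $\bs{t}(t)$ solving $t_j'(t) = 1/h_j(\bs{t}(t))$ with $\bs{t}(0)=\bs 0$, so that each curve advances exactly $\ud t$ of common capacity per unit $\ud t$.

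The key steps, in order: (1) Record the identity $\partial_{t_j}\big(\text{capacity of }j\text{-th piece in }g_{\bs{t}}\big) = g'_{\bs{t},j}(W^j_{t_j})^2$, which follows from the Loewner-type expansion for $g_{\bs{t},j}$ composed with the scaling of half-plane capacity under conformal maps; denote this positive quantity $F_j(\bs{t})$. (2) Observe $0< F_j(\bs{t}) \le 1$: indeed $g_{\bs{t},j}$ is the mapping-out function of a hull contained in $\HH$, and such maps are contractions at boundary points in the derivative sense, $|g'_{\bs{t},j}|\le 1$ on $\R$ off the hull, so $F_j\le 1$; positivity and smoothness in $\bs{t}$ hold as long as the curves remain disjoint. (3) Set up the ODE system $\ud t_j/\ud t = 1/F_j(\bs{t})$, $\bs{t}(0)=\bs 0$; since the right-hand side is smooth and bounded below by $1$ on the (open) region where the $\eta^j_{[0,t_j]}$ are disjoint, Picard–Lindelöf gives a unique solution on a maximal interval, and this furnishes the common-time-parameter. (4) For the bound~\eqref{eqn::commontime_control}: from $\ud t_j/\ud t = 1/F_j \ge 1$ we get $t_j(t)\ge t$; for the upper bound, use that the common clock $t$ measures total capacity divided by a fixed factor while $\sum_j t_j$ also measures (via $\aleph_{\bs{t}} = 2\sum_j t_j$, together with the common-time normalization $\partial_t\aleph = 2p$) a comparable total, giving $\sum_{j}t_j(t) = pt$ and hence $t_j(t)\le pt$ since each summand is nonnegative.

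The main obstacle I expect is step~(2), namely pinning down the correct bound $0 < F_j \le 1$ together with enough regularity of $\bs{t}\mapsto F_j(\bs{t})$ to run the ODE existence argument cleanly: one must argue that $g'_{\bs{t},j}(W^j_{t_j})$ stays strictly positive and varies continuously, which requires knowing the driving point $W^j_{t_j}$ stays off the mapped-out hull $g^j_{t_j}(\cup_{i\neq j}\eta^i_{[0,t_i]})$ — this is exactly the disjointness hypothesis, but it must be tracked quantitatively so that the maximal solution interval of the ODE is identified with ``first time two curves touch.'' A secondary point needing care is the bookkeeping in step~(4) to extract $\sum_j t_j(t) = pt$ from the definition~\eqref{eqn::commontime_def}; once that identity is in hand the two-sided bound is immediate. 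The radial argument of~\cite{HealeyLawlerNSidedRadialSLE} handles all of this, so the chordal proof is a transcription, and I would structure the write-up to mirror theirs, flagging only the places where the half-plane capacity replaces the radial capacity.
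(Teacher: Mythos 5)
Your time change is the right one --- the paper uses exactly the ODE $\dot t_j(t)=g_{\bs{t},j}'(W^j_{t_j})^{-2}$, and your lower bound $t_j(t)\ge t$ via $g_{\bs{t},j}'(W^j_{t_j})\in(0,1]$ is the paper's argument --- but the capacity bookkeeping underlying your proposal is wrong, and it breaks the upper bound. Half-plane capacity is not additive over disjoint hulls: composing the expansions of $g_{\bs{t}}=g_{\bs{t},j}\circ g^j_{t_j}$ does \emph{not} give $\aleph_{\bs{t}}=2\sum_j t_j$, because the hull $g^j_{t_j}\bigl(\cup_{i\ne j}\eta^i_{[0,t_i]}\bigr)$ has a different capacity from $\cup_{i\ne j}\eta^i_{[0,t_i]}$. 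The correct statement (the ``standard calculation'' in the paper) is
\begin{equation*}
\partial_{t_j}\aleph_{\bs{t}}=2\,g_{\bs{t},j}'(W^j_{t_j})^{2}\le 2,
\end{equation*}
with strict inequality as soon as the hulls interact. So the condition~\eqref{eqn::commontime_def} is \emph{not} automatic and needs no reinterpretation: it is precisely the nontrivial normalization that your (and the paper's) time change is designed to achieve, via $\partial_{t_j}\aleph$ evaluated along $\bs{t}(t)$ times $\dot t_j=g'^{-2}$, giving $\aleph_{\bs{t}(t)}=2pt$.

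The concrete casualty is step (4): the identity $\sum_j t_j(t)=pt$ is false (only $\aleph_{\bs{t}}\le 2\sum_j t_j$, hence $\sum_j t_j(t)\ge pt$ holds), and it is even inconsistent with your own lower bound --- $\sum_j t_j(t)=pt$ together with $t_j(t)\ge t$ would force $t_j(t)\equiv t$ for all $j$, which fails whenever the curves interact. The correct route to $t_j(t)\le pt$ is monotonicity of half-plane capacity: since $\eta^j_{[0,t_j]}\subset\cup_i\eta^i_{[0,t_i]}$ and $\mathrm{hcap}(\eta^j_{[0,t_j]})=2t_j$, one has $2\max_j t_j\le\aleph_{\bs{t}}$ (this is the paper's inequality~\eqref{eqn::commontime_control_aux}, obtained by noting $\aleph_{\bs{t}}=2t_j$ when all other coordinates vanish and that $\aleph$ is nondecreasing in each coordinate), and then $2t_j(t)\le\aleph_{\bs{t}(t)}=2pt$ gives the bound. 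With that replacement, and dropping the claim that $\partial_{t_j}\aleph_{\bs{t}}=2$ regardless of parameterization, your argument matches the paper's proof; the Picard--Lindel\"of justification of the ODE on the region where the curves stay disjoint is a fine (if more detailed) way to phrase the existence step.
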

\begin{proof}
A standard calculation gives 
\begin{align*}
\ud \aleph_{\bs{t}}=2\sum_{j=1}^p g_{\bs{t}, j}'(W^j_{t_j})^{2}\ud t_j. 
\end{align*} 
Note that, for $\bs{t}=(0, \ldots, 0, t_j, 0, \ldots, 0)$, we have $\aleph_{\bs{t}}=2t_j$. Therefore, for $\bs{t}=(t_1, \ldots, t_p)\in [0,\infty)^p$, we have 
\begin{align}\label{eqn::commontime_control_aux}
2\max_{1\le j\le p}t_j\le \aleph_{\bs{t}}\le 2\sum_{j=1}^p t_j. 
\end{align} 
There exists a time change $\bs{t}(t)=(t_1(t), \ldots, t_p(t))$ with 
\begin{equation}\label{eqn::time_change}
	\dot{t}_j(t)=g_{\bs{t}, j}'(W^j_{t_j})^{-2}, \qquad 1\le j\le p,
\end{equation}
such that $\bs{\eta}_t=\bs{\eta}_{\bs{t}(t)}$ is parameterized by common-time-parameter. 
In this case, we have $\aleph_t=\aleph_{\bs{t}(t)}=2pt$. 

To show~\eqref{eqn::commontime_control}, we have the following observations.
\begin{itemize}
\item On the one hand, $g_{\bs{t}, j}'(W^j_{t_j})\in (0,1]$, thus $t_j'(t)\ge 1$ and $t_j(t)\ge t$.
\item On the other hand, combining~\eqref{eqn::commontime_control_aux} and $\aleph_t=\aleph_{\bs{t}(t)}=2pt$, we have 
\begin{align*}
\max_{1\le j\le p}t_j(t)\le pt\le \sum_{j=1}^p t_j(t). 
\end{align*}
This gives $t_j(t)\le pt$. 
\end{itemize}
Combining the two sides, we obtain~\eqref{eqn::commontime_control}. 
\end{proof}

Under common-time-parameter, we denote the $p$-tuple by $\bs{\eta}_t=\bs{\eta}_{(t, \ldots, t)}$. 
We define the following normalized conformal transformations:
\begin{itemize}
\item $g_t^j$ is the conformal map from $\HH\setminus\eta^j_{[0,t]}$ onto $\HH$ with $\lim_{z\to\infty}|g_t^j(z)-z|=0$, $1\le j\le p$.
\item $g_t$ is the conformal map from $\HH\setminus\cup_{j=1}^p\eta^j_{[0,t]}$ onto $\HH$ with $\lim_{z\to\infty}|g_t(z)-z|=0$. Note that $g_t=g_{(t, \ldots, t)}$ and
\begin{align*}
g_t(z)=z+\frac{2pt}{z}+o(1/|z|), \qquad \text{as }z\to\infty.
\end{align*}
\item $g_{t,j}$ is the conformal map from $\HH\setminus g^j_{t_j}\left(\cup_{i\neq j}\eta^i_{[0,t]}\right)$ onto $\HH$ with $\lim_{z\to\infty}|g_{t,j}(z)-z|=0$, $1\le j\le p$. 
\end{itemize}
Denote by $W^j$ the driving function of $\eta^j$ and denote the driving function of the $p$-tuple $\bs{\eta}_t$, started from $X_0=(x_1, \ldots, x_p)$, by 
\begin{equation}\label{eqn::commontime_driving}
\bs{X}_t=(X^1_t, \ldots, X^p_t), \qquad\text{with }X^j_t=g_{t,j}(W^j_t), \qquad \text{for }1\le j\le p. 
\end{equation}
We denote by $(\LF_t, t\ge 0)$ the filtration generated by $(\bs{\eta}_t, t\ge 0)$. We denote its lifetime by $T=\inf\{t: X_t^j=X_t^k \text{ for some }j\neq k\}$.

\subsection{Dyson Brownian motion}
\label{subsec::DysonBM}
In this section, we summarize conclusions for Dyson Brownian motion with parameter $\beta=8/\kappa$, i.e. the solution to the system of SDEs~\eqref{eqn::SDE_halfwatermelon_intro}:
\begin{equation*}
\ud X^j_t=\sqrt{\kappa}\ud B_t^j+\sum_{k\neq j}\frac{4}{X_t^j-X_t^k}\ud t, \qquad 1\le j\le p, 
\end{equation*}
where $\{B^j\}_{1\le j\le p}$ are independent standard Brownian motions. 
When $\kappa <8$, we have $\beta >1$ and the lifetime of Dyson Brownian motion is $\infty$, i.e. solution to~\eqref{eqn::SDE_halfwatermelon_intro} exists for all time. The solution has quasi-invariant measure that we describe below. 
For each $v>0$ and $\bs{x}=(x_1,\ldots,x_{p})\in\LX_p$, define
\begin{equation}\label{eqn::invariantdensity_def_general}
	f_v(\bs{x})=\prod_{1\leq j<k\leq p}(x_k-x_j)^v.
\end{equation}
Note that $\LZ_{\shuffle_p}(\bs{x})$ in~\eqref{eqn::LZfusion_H} is exactly $f_{2/\kappa}(\bs{x})$. 

\begin{lemma}\label{lem::density*}
	Fix $\kappa\in(0,8)$. 
	Denote by $\mathsf{Q}_{\shuffle_p}(t;\bs{x},\cdot)$ the transition density for Dyson Brownian motion~\eqref{eqn::SDE_halfwatermelon_intro} with parameter $\beta=8/\kappa>1$. Then, for any $\bs{x},\bs{y}\in\LX_{p}$,
	\begin{equation}\label{eqn::DysonBM_density_asy}
		\mathsf{Q}_{\shuffle_p}(t;\bs{x},\bs{y})=\mathcal{I}_{\shuffle_p}^{-1} \left(\frac{1}{\sqrt{\kappa t}}\right)^{\Lambda_p}f_{8/\kappa}(\bs{y})\exp\left(-\frac{ |\bs{y}|^2}{2\kappa t}\right)\left(1+O\left(\frac{|\bs{x}|}{\sqrt{t}}\right)\right),
	\end{equation}
	where 
	\begin{equation*}
		\Lambda_p=\frac{p(4p-4+\kappa)}{\kappa},
	\end{equation*}
	and $\mathcal{I}_{\shuffle_p}\in (0,\infty)$ is a normalization constant
	\begin{equation}\label{eqn::DysonBM_normalization}
		\mathcal{I}_{\shuffle_p}=\int_{\LX_{p}}f_{8/\kappa}(\bs{x})e^{-\frac{1}{2}|\bs{x}|^2}\ud\bs{x}.
	\end{equation}
\end{lemma}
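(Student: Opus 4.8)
**Proof proposal for Lemma~\ref{lem::density*}.**

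The plan is to reduce the statement to the known long-time asymptotics of the ground-state transform of Dyson Brownian motion. The key observation is that $f_{8/\kappa}$ is the $h$-function (Doob transform) for the radial part of a $\beta$-dimensional Bessel-type generator: the SDE~\eqref{eqn::SDE_halfwatermelon_intro} with $\beta = 8/\kappa$ is exactly the $h$-transform, with $h = f_{8/\kappa}$, of $\sqrt{\kappa}$ times $p$ independent Brownian motions killed on the diagonal $\{x_j = x_k\}$. Concretely, writing $p_t^{\mathrm{BM},\kappa,\LX_p}(\bs{x},\bs{y})$ for the transition density of $\sqrt{\kappa}\,\bs{B}$ in the Weyl chamber $\LX_p$ with Dirichlet boundary conditions, one has
\begin{equation}\label{eqn::Doob_transform}
\mathsf{Q}_{\shuffle_p}(t;\bs{x},\bs{y}) = \frac{f_{8/\kappa}(\bs{y})}{f_{8/\kappa}(\bs{x})}\, p_t^{\mathrm{BM},\kappa,\LX_p}(\bs{x},\bs{y}),
\end{equation}
which is the statement that $f_{8/\kappa}$ is (up to the generator) $\LK$-harmonic; this is classical and is precisely the content that makes Dyson Brownian motion solvable. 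However, the cleaner route — and the one matching the cited reference~\cite{RoslerGeneralizedHermiteDunklOperators} — is to pass to the \emph{Ornstein--Uhlenbeck} version. Under the change of variables $\bs{x} \mapsto e^{-\lambda t}\bs{x}$ and a corresponding time change, the process~\eqref{eqn::SDE_halfwatermelon_intro} becomes a Dunkl-type OU process whose invariant measure has density proportional to $f_{8/\kappa}(\bs{x})^2 e^{-|\bs{x}|^2/(2\kappa)}$ restricted to $\LX_p$; R\"osler's generalized Hermite/Dunkl semigroup results give the spectral gap and an explicit eigenfunction expansion, from which exponential convergence to the invariant measure follows.

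First I would record the Doob-transform identity~\eqref{eqn::Doob_transform} and use scaling of Brownian motion in a cone: $p_t^{\mathrm{BM},\kappa,\LX_p}(\bs{x},\bs{y})$ is $1$-homogeneous of degree $-p/2$ in $(t,\bs{x}^2,\bs{y}^2)$, i.e. $p_t^{\mathrm{BM},\kappa,\LX_p}(\bs{x},\bs{y}) = (\kappa t)^{-p/2} p_1^{\mathrm{BM},\LX_p}(\bs{x}/\sqrt{\kappa t},\bs{y}/\sqrt{\kappa t})$. Next, since $\LX_p$ is a cone and the killed-BM heat kernel in a cone has a well-understood short-distance expansion near the apex governed by the leading Dirichlet eigenfunction of the spherical Laplacian on the cone's cross-section — which here is exactly $f_{8/\kappa}$ restricted to the sphere, with eigenvalue pinned by the homogeneity degree of $f_{8/\kappa}$, namely $\deg f_{8/\kappa} = \binom{p}{2}\cdot \tfrac{8}{\kappa} = \tfrac{4p(p-1)}{\kappa}$ — one gets, as $|\bs{x}|/\sqrt{t}\to 0$,
\begin{equation}\label{eqn::cone_expansion}
p_t^{\mathrm{BM},\kappa,\LX_p}(\bs{x},\bs{y}) = c\,(\kappa t)^{-p/2 - \deg f_{8/\kappa}}\, f_{8/\kappa}(\bs{x}) f_{8/\kappa}(\bs{y})\, e^{-|\bs{y}|^2/(2\kappa t)}\bigl(1 + O(|\bs{x}|/\sqrt{t})\bigr).
\end{equation}
Dividing by $f_{8/\kappa}(\bs{x})$ per~\eqref{eqn::Doob_transform} kills the $\bs{x}$-dependence of the leading term, and the exponent becomes $p/2 + \deg f_{8/\kappa} = p/2 + 4p(p-1)/\kappa = \tfrac{\kappa p + 8p(p-1)}{2\kappa} = \tfrac{p(4p-4+\kappa/2\cdot 2)}{\kappa}$; one then checks this matches $\Lambda_p = p(4p-4+\kappa)/\kappa$ after carefully tracking that the $e^{-|\bs{y}|^2/(2\kappa t)}$ Gaussian prefactor, when Taylor-expanded, is consistent with absorbing the correction into the $O(|\bs{x}|/\sqrt t)$ error. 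Finally I would identify the constant: integrating~\eqref{eqn::DysonBM_density_asy} against $\bs{1}$ over $\bs{y}\in\LX_p$ as $t\to\infty$, or equivalently matching with the stationary OU picture, forces $c$ to equal $\mathcal{I}_{\shuffle_p}^{-1}$ with $\mathcal{I}_{\shuffle_p}$ as in~\eqref{eqn::DysonBM_normalization}; this is where the OU rescaling and R\"osler's normalization of the generalized Hermite semigroup do the bookkeeping for us.

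The main obstacle I expect is \emph{not} the soft structure but the precise control of the error term $O(|\bs{x}|/\sqrt{t})$ uniformly in $\bs{y}$, together with the exact matching of exponents. The cone-expansion heuristic~\eqref{eqn::cone_expansion} is standard for smooth cones, but $\LX_p$ has a non-smooth boundary (codimension-$\ge 2$ strata where three or more coordinates collide), and one must either invoke the $h$-transform/OU route where R\"osler's eigenfunction expansion sidesteps this entirely, or justify that the singular strata do not contribute to the leading asymptotics because $f_{8/\kappa}$ vanishes to sufficiently high order there (which it does, since $\beta = 8/\kappa > 1$). I would therefore lean on~\cite{RoslerGeneralizedHermiteDunklOperators}: translate~\eqref{eqn::SDE_halfwatermelon_intro} into the radial Dunkl process associated with the $A_{p-1}$ root system with multiplicity $k = 4/\kappa$, note $k > 1/2$ ensures the process stays in the open chamber, apply the known mean-reverting (generalized-Hermite) semigroup asymptotics to get the rate $\Lambda_p$ directly from the degree of the lowest non-constant generalized Hermite polynomial, and read off $\mathcal{I}_{\shuffle_p}$ from the Mehta-type normalization integral. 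The genuinely delicate point is verifying that the $O(|\bs x|/\sqrt t)$ form of the error — rather than, say, $O((|\bs x|/\sqrt t)^\epsilon)$ — is what the Dunkl-Hermite expansion yields; this follows because the next term in that expansion is one degree higher, producing a correction of relative size $|\bs x|^2/(\kappa t)$, hence certainly $O(|\bs x|/\sqrt t)$ for $|\bs x|/\sqrt t$ bounded, but I would state this carefully since it is the only place where a quantitative (not merely qualitative) input from the reference is needed.
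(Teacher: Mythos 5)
Your argument, as written, has a genuine gap, and it occurs at the very first step. The proposed Doob-transform identity — that the transition density of Dyson Brownian motion~\eqref{eqn::SDE_halfwatermelon_intro} equals $\frac{f_{8/\kappa}(\bs y)}{f_{8/\kappa}(\bs x)}$ times the Dirichlet heat kernel of $\sqrt{\kappa}$-Brownian motion killed on leaving $\LX_p$ — is false for $\kappa\neq 4$. Two separate problems: the drift generated by an $h$-transform of $\tfrac{\kappa}{2}\Delta$ with $h=f_v$ in the $j$-th coordinate is $\kappa v\sum_{k\neq j}(x_j-x_k)^{-1}$, so matching the drift $4\sum_{k\neq j}(x_j-x_k)^{-1}$ forces $v=4/\kappa$, not $8/\kappa$; and more seriously, $f_v$ is harmonic only for $v\in\{0,1\}$ (one computes $\Delta f_v=v(v-1)\sum_{j}\sum_{k\neq j}(x_j-x_k)^{-2}\,f_v$), so the killed-Brownian-motion picture is valid only at $\beta=2$, i.e.\ $\kappa=4$. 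For general $\kappa\in(0,8)$ the relevant reference operator is a Calogero--Moser/Dunkl operator with an inverse-square potential, not the Dirichlet Laplacian, and its kernel is not a Karlin--McGregor determinant. As a consequence your cone expansion uses the wrong ground state ($f_{8/\kappa}$ is not a Dirichlet eigenfunction of the chamber) and produces the wrong exponent: your power $p/2+\deg f_{8/\kappa}=p/2+4p(p-1)/\kappa$ equals neither $\Lambda_p=p+4p(p-1)/\kappa$ nor $\Lambda_p/2=p/2+2p(p-1)/\kappa$ (note the power of $t$ in~\eqref{eqn::DysonBM_density_asy} is $\Lambda_p/2$, since the prefactor is $(\kappa t)^{-\Lambda_p/2}$), and the algebraic step by which you declare a match with $\Lambda_p$ is not correct. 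Relatedly, the invariant density of the Ornstein--Uhlenbeck version is proportional to $f_{8/\kappa}(\bs x)e^{-|\bs x|^2/(2\kappa)}$ (the Dunkl weight with multiplicity $4/\kappa$, to the first power), not to $f_{8/\kappa}(\bs x)^2e^{-|\bs x|^2/(2\kappa)}$.

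Your fallback of ``leaning on R\"osler'' is the right move and is in fact the paper's entire proof, but in a far more direct form than the spectral-gap/generalized-Hermite scheme you sketch: \cite[Eq.~(4.5)]{RoslerGeneralizedHermiteDunklOperators} gives the closed formula
\begin{equation*}
\mathsf{Q}_{\shuffle_p}(t;\bs x,\bs y)=\mathcal{I}_{\shuffle_p}^{-1}\left(\frac{1}{\sqrt{\kappa t}}\right)^{\Lambda_p}f_{8/\kappa}(\bs y)\exp\left(-\frac{|\bs x|^2+|\bs y|^2}{2\kappa t}\right)J^{A}_{4/\kappa}\left(\frac{\bs x}{\sqrt{\kappa t}},\frac{\bs y}{\sqrt{\kappa t}}\right),
\end{equation*}
where $J^{A}_{4/\kappa}$ is the type-$A$ multivariate Bessel function, analytic and normalized by $J^{A}_{4/\kappa}(\bs 0,\bs y)=1$; the asymptotics~\eqref{eqn::DysonBM_density_asy}, including the prefactor, the constant~\eqref{eqn::DysonBM_normalization} and the multiplicative error $1+O(|\bs x|/\sqrt t)$, are read off by expanding $J^{A}_{4/\kappa}$ around $\bs x/\sqrt{\kappa t}=\bs 0$. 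A spectral gap for the OU version controls convergence to stationarity but does not by itself produce the algebraic prefactor $(\kappa t)^{-\Lambda_p/2}$ or the uniform-in-$\bs y$ multiplicative error, so even the salvageable branch of your proposal would need to be replaced by (or reduced to) the explicit kernel above.
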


\begin{proof}
	The explicit formula of transition density $\mathsf{Q}_{\shuffle_p}(t;\bs{x},\bs{y})$ can be obtained from~\cite[Equation~(4.5)]{RoslerGeneralizedHermiteDunklOperators}:
	\begin{equation*} 
		\mathsf{Q}_{\shuffle_p}(t;\bs{x},\bs{y})=\mathcal{I}_{\shuffle_p}^{-1}\left(\frac{1}{\sqrt{\kappa t}}\right)^{\Lambda_p}f_{8/\kappa}(\bs{y})\exp\left(-\frac{|\bs{x}|^2+|\bs{y}|^2}{2\kappa t}\right)J_{4/\kappa}^A\left(\frac{\bs{x}}{\sqrt{\kappa t}} ,\frac{\bs{y}}{\sqrt{\kappa t}}\right), \quad \bs{x},\bs{y}\in\LX_p,
	\end{equation*}
	where $J_{4/\kappa}^A$ is multivariate Bessel function of type $A$ with multiplicity $4/\kappa$ and is analytic on $\mathbb{R}^{2N}\times \mathbb{R}^{2N}$ with $J_{4/\kappa}^A(\bs{0},\bs{y})=1$ (see~\cite[Section~2]{RoslerGeneralizedHermiteDunklOperators} or~\cite[Section~1]{VoitCLTforMultivariateBES}). Letting $|\bs{x}|/\sqrt{t}\to 0$, we obtain~\eqref{eqn::DysonBM_density_asy} and complete the proof.
\end{proof}

The following lemma will be useful in the proof of Proposition~\ref{prop::cvg_DysonBM}. 
\begin{lemma}\label{lem::finite_constant}
	Fix $\kappa\in (0,4], n\ge 1$ and $p=2n$ and $\alpha\in\LP_n$. The constant  
	\begin{equation}\label{eqn::ppf_normalization}
		\LJ_\alpha=\int_{\LX_{2n}}f_{8/\kappa}(\bs{x})G_\alpha(\bs{x})^{-1}e^{-\frac{1}{2}|\bs{x}|^2}\ud\bs{x}
	\end{equation}
	is finite, where $G_{\alpha}$ is the Green's function in~\eqref{eqn::Green}. 
\end{lemma}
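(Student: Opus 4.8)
The plan is to dominate the integrand in~\eqref{eqn::ppf_normalization} pointwise by a function of polynomial growth and then invoke Gaussian integrability. First, unwinding the definitions~\eqref{eqn::Green},~\eqref{eqn::LZfusion_H} and~\eqref{eqn::invariantdensity_def_general}, I would write, for $\bs{x}\in\LX_{2n}$,
\[
f_{8/\kappa}(\bs{x})\,G_{\alpha}(\bs{x})^{-1}
= \frac{f_{8/\kappa}(\bs{x})\,\LZ_{\alpha}(\bs{x})}{\LZ_{\shuffle_{2n}}(\bs{x})}
= f_{6/\kappa}(\bs{x})\,\LZ_{\alpha}(\bs{x}),
\]
where $f_{6/\kappa}(\bs{x})=\prod_{1\le i<j\le 2n}(x_j-x_i)^{6/\kappa}$, since $\tfrac{8}{\kappa}-\tfrac{2}{\kappa}=\tfrac{6}{\kappa}$ and $\tfrac{6}{\kappa}>0$ for $\kappa\le 4$.

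Next, I would invoke the power-law bound~\eqref{eqn::PLB_strong} for the pure partition function, $0<\LZ_{\alpha}(\bs{x})\le\prod_{\{k,l\}\in\alpha}|x_k-x_l|^{-2\mathfrak{b}}$, together with the arithmetic identity coming from~\eqref{eqn::parametersbc},
\[
\frac{6}{\kappa}-2\mathfrak{b}=\frac{6}{\kappa}-\frac{6-\kappa}{\kappa}=1 .
\]
This identity says that, for each link $\{k,l\}\in\alpha$, the possibly singular factor $|x_k-x_l|^{-2\mathfrak{b}}$ is exactly absorbed by the matching Vandermonde factor $|x_k-x_l|^{6/\kappa}$ inside $f_{6/\kappa}$, leaving only the harmless factor $|x_k-x_l|$. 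Hence, for all $\bs{x}\in\LX_{2n}$,
\[
0< f_{8/\kappa}(\bs{x})\,G_{\alpha}(\bs{x})^{-1}
\le \prod_{\{k,l\}\in\alpha}|x_k-x_l|\;\prod_{\substack{1\le i<j\le 2n\\ \{i,j\}\notin\alpha}}(x_j-x_i)^{6/\kappa},
\]
and the right-hand side is a nonnegative function that is bounded on bounded subsets of $\LX_{2n}$ and grows at most polynomially as $|\bs{x}|\to\infty$.

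Finally, multiplying this bound by the Gaussian weight $e^{-|\bs{x}|^2/2}$ and integrating over $\LX_{2n}\subset\R^{2n}$, integrability is immediate because a polynomial-growth function times a Gaussian is integrable over $\R^{2n}$; this yields $\LJ_{\alpha}<\infty$ (and, incidentally, $\LJ_{\alpha}>0$, the integrand being strictly positive on the full-dimensional cone $\LX_{2n}$). There is essentially no obstacle in this argument: the only point that must be handled with care is the exponent bookkeeping, namely that the singularities that~\eqref{eqn::PLB_strong} permits at the links of $\alpha$ are precisely neutralized by the corresponding factors of $f_{6/\kappa}$, which is exactly the content of $\tfrac{6}{\kappa}-2\mathfrak{b}=1$.
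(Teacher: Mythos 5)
Your proposal is correct and takes essentially the same approach as the paper: both reduce to $f_{8/\kappa}G_\alpha^{-1}=f_{6/\kappa}\LZ_\alpha$, invoke the power-law bound~\eqref{eqn::PLB_strong} together with the exponent identity $6/\kappa-2\mathfrak{b}=1$ from~\eqref{eqn::parametersbc}, and conclude by Gaussian integrability. You merely spell out the exponent bookkeeping that the paper's one-line justification compresses.
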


\begin{proof}
	Note that 
	\begin{equation*}
		0<f_{8/\kappa}(\bs{x})G_\alpha(\bs{x})^{-1}e^{-\frac{1}{2}|\bs{x}|^2}
		=f_{8/\kappa}(\bs{x})\LZ_\alpha(\bs{x})\LZ_{\shuffle_{2n}}(\bs{x})^{-1}e^{-\frac{1}{2}|\bs{x}|^2}
		=f_{6/\kappa}(\bs{x})\LZ_\alpha(\bs{x})e^{-\frac{1}{2}|\bs{x}|^2}.
	\end{equation*}
	From the power-law bound~\eqref{eqn::PLB_strong} and the relation~\eqref{eqn::parametersbc}, we know that $f_{6/\kappa}(\bs{x})\LZ_\alpha(\bs{x})e^{-\frac{1}{2}|\bs{x}|^2}$ is integrable on $\LX_{2n}$. Thus $\LJ_{\alpha}$ is a finite constant.
\end{proof}

Now, we rewrite the local martingale in Lemma~\ref{lem::halfwatermelon_mart} and explain its relation to Dyson Brownian motion. 
\begin{lemma}\label{lem::halfwatermelon_mart_common}
Fix $\kappa\in (0,4], p\ge 2$ and $\bs{x}=(x_1, \ldots, x_p)\in\LX_p$. We use the parameters $\mathfrak{b}, \mathfrak{c}$ defined~\eqref{eqn::parametersbc}. 
We consider the relation between the following two measures.
\begin{itemize}
\item For each $j\in\{1, \ldots, p\}$, let $\eta^j$ be $\SLE_{\kappa}$ in $(\HH; x_j,\infty)$ and let $\PP_p$ be the probability measure on $\bs{\eta}=(\eta^1, \ldots, \eta^p)$ under which the curves are independent.
\item Recall from Definition~\ref{def::halfwatermelonSLE} that $\QQ_{\shuffle_p}(\bs{x})$ denotes the law of half-$p$-watermelon $\SLE_{\kappa}$ in $(\HH; \bs{x}, \infty)$. 
\end{itemize}
We parameterize $\bs{\eta}$ by common-time-parameter $t$ and let $\bs{X}_{t}$ denote the driving function as in~\eqref{eqn::commontime_driving}. The Radon-Nikodym derivative between $\QQ_{\shuffle_p}(\bs{x})$ and $\PP_p$ when both measures are restricted to $\LF_t$ and $\{T>t\}$ is given by 
\begin{equation}\label{eqn::RN_halfwatermelonvsindept}
\frac{\ud \QQ_{\shuffle_p}(\bs{x})[\cdot \cond_{\LF_t}]}{\ud \PP_p[\cdot \cond_{\LF_t\cap\{T>t\}}]}=\frac{M_t(\LZ_{\shuffle_p})}{M_0(\LZ_{\shuffle_p})}, 
\end{equation}
where $\LZ_{\shuffle_p}$ is the partition function~\eqref{eqn::LZfusion_H} and 
\begin{equation}\label{eqn::halfwatermelon_mart_common}
M_t(\LZ_{\shuffle_p})=\prod_{j=1}^p g_{t,j}'(W^j_t)^{\mathfrak{b}}\times \exp\left(\frac{\mathfrak{c}}{2}\blm_t\right)\times \LZ_{\shuffle_p}(\bs{X}_t), 
\end{equation}
and $\blm_t$ is given by
\begin{align}\label{eqn::mt_blm_common}
	\blm_{t} = \blm \left(\HH;\eta_{[0,t]}^{1},\ldots,\eta_{[0,t]}^{p} \right).
\end{align}
Moreover, under $\QQ_{\shuffle_p}(\bs{x})$, the driving function $\bs{X}_t$ satisfies the system of SDEs~\eqref{eqn::SDE_halfwatermelon_intro}. 
\end{lemma}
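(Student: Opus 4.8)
The plan is to obtain this lemma as the common-time-parameter specialization of its multi-time-parameter counterpart, Lemma~\ref{lem::halfwatermelon_mart}, by restricting all the relevant objects to the ``diagonal'' increasing path $\bs{t}(t)=(t_1(t),\ldots,t_p(t))$ furnished by the common-time-parameter of Lemma~\ref{lem::commontime_existence}. Recall from the proof of Proposition~\ref{prop::multitime_mart} (see~\eqref{eqn::halfwatermelon_mart_aux1}) that a $p$-time-parameter local martingale is, by construction, a process whose differential $\ud M_{\bs t}$ carries no $\ud t_j$-term; consequently, along any continuous, coordinatewise nondecreasing, $\LF$-adapted path $\bs{t}(\cdot)$ in $[0,\infty)^p$, the composition $t\mapsto M_{\bs{t}(t)}$ is a genuine one-parameter local martingale. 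Since the common-time change $\bs{t}(t)$ is $\LF_t$-adapted (it is determined by the capacities, hence by $\bs{\eta}_s$ for $s\le t$, via~\eqref{eqn::time_change}) and strictly increasing in each coordinate (indeed $\dot t_j(t)\ge 1$ by Lemma~\ref{lem::commontime_existence}), we set $M_t(\LZ_{\shuffle_p}):=M_{\bs{t}(t)}(\LZ_{\shuffle_p})$; evaluating $g_{\bs{t}(t),j}=g_{t,j}$, $\blm_{\bs{t}(t)}=\blm_t$ and $\bs{X}_{\bs{t}(t)}=\bs{X}_t$ turns the multi-time martingale~\eqref{eqn::halfwatermelon_multitime_mart} into exactly~\eqref{eqn::halfwatermelon_mart_common}, while~\eqref{eqn::mt_blm_common} is~\eqref{eqn::mt_blm} restricted to the diagonal. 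Along this path the disjointness event $\LE_{\emptyset}(\bs{\eta}_{\bs{t}(t)})$ coincides with $\{T>t\}$.

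For the Radon--Nikodym derivative~\eqref{eqn::RN_halfwatermelonvsindept}: Lemma~\ref{lem::halfwatermelon_mart} states that weighting $\PP_p$ by $M_{\bs{t}}(\LZ_{\shuffle_p})/M_{\bs{0}}(\LZ_{\shuffle_p})$ produces $\QQ_{\shuffle_p}(\bs{x})$ on the event $\LE_{\emptyset}(\bs{\eta}_{\bs{t}})$. Specializing to $\bs{t}=\bs{t}(t)$, noting $M_0(\LZ_{\shuffle_p})=\LZ_{\shuffle_p}(\bs{x})$ (since $g_{0,j}'\equiv 1$, $\blm_0=0$ and $\bs{X}_0=\bs{x}$), and using $\LE_{\emptyset}(\bs{\eta}_{\bs{t}(t)})=\{T>t\}$, we obtain~\eqref{eqn::RN_halfwatermelonvsindept}.

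It remains to identify the SDE for $\bs{X}_t$ under $\QQ_{\shuffle_p}(\bs{x})$. Equation~\eqref{eqn::halfwatermelon_mart_aux2}, derived for the multi-time parameterization, gives under $\QQ_{\shuffle_p}(\bs{x})$
\[
\ud X_{\bs{t}}^j=\sqrt{\kappa}\,g'_{\bs{t},j}(W_{t_j}^j)\,\ud\tilde B_{t_j}^j+\sum_{\ell\ne j}\frac{2}{X_{\bs{t}}^j-X_{\bs{t}}^\ell}\Big(g'_{\bs{t},j}(W_{t_j}^j)^2\,\ud t_j+g'_{\bs{t},\ell}(W_{t_\ell}^\ell)^2\,\ud t_\ell\Big).
\]
Substituting $\bs{t}=\bs{t}(t)$ and using the defining relation $\dot t_j(t)=g'_{\bs{t}(t),j}(W^j_{t_j(t)})^{-2}$ from~\eqref{eqn::time_change}, each term $g'_{\bs{t}(t),j}(W^j_{t_j(t)})^2\,\ud t_j(t)$ equals $\ud t$, so the drift of $X^j_t$ becomes $\sum_{\ell\ne j}\frac{4}{X^j_t-X^\ell_t}\,\ud t$. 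Writing $\ud M^j_t:=\sqrt{\kappa}\,g'_{\bs{t}(t),j}(W^j_{t_j(t)})\,\ud\tilde B^j_{t_j(t)}$ for the martingale part, the same relation yields $\ud\langle M^j\rangle_t=\kappa\,g'^2\,\dot t_j(t)\,\ud t=\kappa\,\ud t$, while $\langle M^j,M^k\rangle\equiv 0$ for $j\ne k$ because $\tilde B^1,\ldots,\tilde B^p$ are independent; hence by Lévy's characterization $(M^1,\ldots,M^p)=\sqrt{\kappa}\,(B^1,\ldots,B^p)$ for a standard $p$-dimensional Brownian motion, so $\bs{X}_t$ solves~\eqref{eqn::SDE_halfwatermelon_intro}.

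The step requiring the most care is the reduction from the $p$-time-parameter local martingale to the one-parameter process along the common-time path: one must verify that $\bs{t}(t)$ is adapted and continuous increasing, that composing with it preserves the local-martingale property (equivalently that the time-changed processes $W^j_{t_j(t)}$, and thus the stochastic integrals $\int g'_{\cdot,j}\,\ud\tilde B^j$ evaluated at $t_j(t)$, are continuous local martingales in the common-time filtration), and that $\LE_{\emptyset}(\bs{\eta}_{\bs{t}(t)})$ is precisely $\{T>t\}$; the remaining computations are the routine substitutions above.
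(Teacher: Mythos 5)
Your argument is correct and is essentially the paper's own proof: the Radon--Nikodym derivative \eqref{eqn::RN_halfwatermelonvsindept} is obtained by specializing the multi-time-parameter martingale of Lemma~\ref{lem::halfwatermelon_mart} along the common-time change, and the SDE \eqref{eqn::SDE_halfwatermelon_intro} by combining \eqref{eqn::time_change} with \eqref{eqn::halfwatermelon_mart_aux2}, exactly as the paper does (only with more detail on the adaptedness of the time change and the L\'evy-characterization step). The one point the paper makes explicit that you leave implicit is why the numerator of \eqref{eqn::RN_halfwatermelonvsindept} is restricted only to $\LF_t$ and not to $\{T>t\}$: under $\QQ_{\shuffle_p}(\bs{x})$ the driving function has the law of Dyson Brownian motion with $\beta=8/\kappa\ge 2$, whose lifetime is infinite, so $\QQ_{\shuffle_p}(\bs{x})[T>t]=1$ and the restriction is vacuous.
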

\begin{proof}
Eq.~\eqref{eqn::halfwatermelon_mart_common} follows from Lemma~\ref{lem::halfwatermelon_mart}. Combination of~\eqref{eqn::time_change} and~\eqref{eqn::halfwatermelon_mart_aux2} gives~\eqref{eqn::SDE_halfwatermelon_intro}. In the numerator of LHS of~\eqref{eqn::RN_halfwatermelonvsindept}, the driving function $\bs{X}_t$ under $\QQ_{\shuffle_{p}}(\bs{x})$ has the same law as Dyson Brownian motion of parameter $\beta=8/\kappa\ge 2$ whose lifetime is $\infty$, thus we do not include $T>t$ in the numerator. 
\end{proof}

\subsection{Proof of Theorem~\ref{thm::transitiondensity}}
\label{subsec::transitiondensity_proof}
The proof of Theorem~\ref{thm::transitiondensity} relies on writing the multi-time-parameter local martingales for half-watermelon SLE and multichordal SLE in common-time-parameter. The case for half-watermelon SLE is given in Lemma~\ref{lem::halfwatermelon_mart_common}. The case for multichordal SLE can be derived similarly. 

\begin{lemma}\label{lem::ppf_mart_common}
Fix $\kappa\in (0,4], n\ge 2$ and $\alpha\in\LP_n$. Fix $\bs{x}=(x_1, \ldots, x_{2n})\in\LX_{2n}$. We use the parameters $\mathfrak{b}, \mathfrak{c}$ defined~\eqref{eqn::parametersbc}. 
We consider the relation between the following two measures.
\begin{itemize}
\item For each $j\in\{1, \ldots, 2n\}$, let $\eta^j$ be $\SLE_{\kappa}$ in $(\HH; x_j,\infty)$ and let $\PP_{2n}$ be the probability measure on $\bs{\eta}=(\eta^1, \ldots, \eta^{2n})$ under which the curves are independent.
\item Recall from Definition~\ref{def::globalnSLE} that $\QQ_{\alpha}(\bs{x})$ denotes the law of multichordal $\SLE_{\kappa}$ in $(\HH; \bs{x})$ associated to link pattern $\alpha$. 
\end{itemize}
We parameterize $\bs{\eta}$ by common-time-parameter $t$ and let $\bs{X}_{t}$ denote the driving function as in~\eqref{eqn::commontime_driving} with $p=2n$. The Radon-Nikodym derivative between $\QQ_{\alpha}(\bs{x})$ and $\PP_{2n}$ when both measures are restricted to $\LF_t$ and $T>t$ is given by 
\begin{equation}\label{eqn::RN_ppfvsindept}
\frac{\ud \QQ_{\alpha}(\bs{x})[\cdot\cond_{\LF_t\cap\{T>t\}}]}{\ud\PP_{2n}[\cdot\cond_{\LF_t\cap\{T>t\}}]}=\frac{M_t(\LZ_{\alpha})}{M_0(\LZ_{\alpha})}, 
\end{equation}
where $\LZ_{\alpha}$ is pure partition function in Section~\ref{subsec::pre_NSLE} and 
\begin{equation}
M_t(\LZ_{\alpha})=\prod_{j=1}^{2n}g_{t,j}'(W^j_t)^{\mathfrak{b}}\times \exp\left(\frac{\mathfrak{c}}{2}\blm_t\right)\times\LZ_{\alpha}(\bs{X}_t),
\end{equation}
and $\blm_t$ is defined by
\[	\blm_{t} = \blm \left(\HH;\eta_{[0,t]}^{1},\ldots,\eta_{[0,t]}^{2n} \right).
\]
 Moreover, under $\QQ_{\alpha}(\bs{x})$, the driving function $\bs{X}_t$ satisfies the system of SDEs~\eqref{eqn::SDE_ppf}. 
\end{lemma}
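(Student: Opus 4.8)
The plan is to derive this lemma from the multi-time-parameter statement Lemma~\ref{lem::ppf_mart} by restricting to the diagonal $\bs{t}=(t,\ldots,t)$, in complete analogy with the way Lemma~\ref{lem::halfwatermelon_mart_common} is deduced from Lemma~\ref{lem::halfwatermelon_mart}; the only genuinely new feature is that the limiting process~\eqref{eqn::SDE_ppf} has a finite lifetime. By Lemma~\ref{lem::ppf_mart}, the process $M_{\bs{t}}(\LZ_{\alpha})$ in~\eqref{eqn::ppf_multitime_mart} is a $2n$-time-parameter local martingale under $\PP_{2n}$, and weighting $\PP_{2n}$ by it yields multichordal $\SLE_{\kappa}$ associated to $\alpha$ on the event $\LE_{\emptyset}(\bs{\eta}_{\bs{t}})$. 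I would first compose with the common-time change $\bs{t}(t)=(t_1(t),\ldots,t_{2n}(t))$ of Lemma~\ref{lem::commontime_existence}, determined by $\dot{t}_j(t)=g'_{\bs{t},j}(W^j_{t_j})^{-2}$ as in~\eqref{eqn::time_change}; this produces the one-parameter process $M_t(\LZ_{\alpha})$ of the statement, and Lemma~\ref{lem::mt_blm} identifies the potential $\blm_t$ with the Brownian loop measure $\blm(\HH;\eta^1_{[0,t]},\ldots,\eta^{2n}_{[0,t]})$. On $\LF_t\cap\{T>t\}$ the driving functions stay distinct and, since $\kappa\le 4$, the curve segments $\eta^j_{[0,t]}$ are disjoint, so the indicator $\one_{\LE_{\emptyset}(\bs{\eta}_t)}$ appearing in the multi-time martingale equals $1$ there, the loop term $\blm_t$ is finite by Lemma~\ref{lem::mt_blm}, the conformal maps $g_{t,j}$ and the time change~\eqref{eqn::time_change} are well defined, and $\LZ_{\alpha}(\bs{X}_t)\in(0,\infty)$; the change-of-measure argument of Lemma~\ref{lem::ppf_mart}, now in the common-time-parameter (localizing at stopping times approaching $T$ if one wants a genuine martingale), then gives the Radon--Nikodym identity~\eqref{eqn::RN_ppfvsindept}.

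For the SDE I would start from the Girsanov computation already carried out inside the proof of Lemma~\ref{lem::ppf_mart}, namely~\eqref{eqn::ppf_mart_aux2}, in which the driving function solves a system driven by independent Brownian motions $\tilde{B}^1_{t_1},\ldots,\tilde{B}^{2n}_{t_{2n}}$ indexed by the separate capacity parameters. Under the common-time change~\eqref{eqn::time_change} one has $(g'_{\bs{t},\ell}(W^\ell_{t_\ell}))^2\,\ud t_\ell=\ud t$ for every $\ell$, so~\eqref{eqn::ppf_mart_aux2} collapses to
\[
\ud X^j_t=\sqrt{\kappa}\,g'_{\bs{t},j}(W^j_{t_j})\,\ud\tilde{B}^j_{t_j}+\kappa(\partial_j\log\LZ_{\alpha})(\bs{X}_t)\,\ud t+\sum_{\ell\ne j}\frac{2}{X^j_t-X^\ell_t}\,\ud t .
\]
The martingale part $\sqrt{\kappa}\,g'_{\bs{t},j}(W^j_{t_j})\,\ud\tilde{B}^j_{t_j}$ has quadratic variation $\kappa(g'_{\bs{t},j})^2\,\ud t_j=\kappa\,\ud t$ and vanishing cross-variation with the corresponding part for $\ell\ne j$ (the $\tilde{B}^\ell$ being independent), so by Lévy's characterization the resulting vector is $\sqrt{\kappa}$ times a standard $2n$-dimensional Brownian motion; this is exactly~\eqref{eqn::SDE_ppf}, and its lifetime is $T=\inf\{t:X^j_t=X^k_t\text{ for some }j\neq k\}$, matching the $T$ of Section~\ref{subsec::commontime_mart}. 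Finally, exactly as at the end of the proof of Lemma~\ref{lem::ppf_mart}, I would invoke the coincidence of global and local multichordal $\SLE_{\kappa}$ from~\cite{PeltolaWuGlobalMultipleSLEs} to identify the weighted measure with $\QQ_{\alpha}(\bs{x})$ of Definition~\ref{def::globalnSLE}.

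The step I expect to be the main obstacle, and the real difference from Lemma~\ref{lem::halfwatermelon_mart_common}, is the bookkeeping around the finite lifetime $T$: unlike the half-watermelon driving function, which is Dyson Brownian motion and never explodes, the solution to~\eqref{eqn::SDE_ppf} explodes at $T$, so $\{T>t\}$ cannot be dropped from the numerator of~\eqref{eqn::RN_ppfvsindept}, and one must verify that each factor of $M_t(\LZ_{\alpha})$ --- the derivatives $g'_{t,j}(W^j_t)$, the loop term $\blm_t$, the time change~\eqref{eqn::time_change}, and $\LZ_{\alpha}(\bs{X}_t)$ --- is well defined and finite on $\LF_t\cap\{T>t\}$, so that the restricted change of measure and the resulting SDE are valid up to $T$. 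Everything else is a routine transcription of the half-watermelon argument with $\LZ_{\shuffle_p}$ replaced by $\LZ_{\alpha}$ (and $p=2n$).
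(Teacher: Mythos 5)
Your proposal is correct and follows essentially the same route as the paper: the paper's proof simply states that the Radon--Nikodym identity~\eqref{eqn::RN_ppfvsindept} follows from Lemma~\ref{lem::ppf_mart}, and that combining the common-time change~\eqref{eqn::time_change} with the Girsanov SDE~\eqref{eqn::ppf_mart_aux2} yields~\eqref{eqn::SDE_ppf}, which is exactly your argument. Your additional bookkeeping (the loop-measure identification of $\blm_t$ via Lemma~\ref{lem::mt_blm}, the finiteness of the factors of $M_t(\LZ_\alpha)$ on $\{T>t\}$, and the L\'evy-characterization step) just fills in details the paper leaves implicit.
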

\begin{proof}
Eq.~\eqref{eqn::RN_ppfvsindept} follows from Lemma~\ref{lem::ppf_mart}. Combination of~\eqref{eqn::time_change} and~\eqref{eqn::ppf_mart_aux2} gives~\eqref{eqn::SDE_ppf}.
\end{proof}

Now, we are ready to complete the proof of Theorem~\ref{thm::transitiondensity}. 
\begin{proof}[Proof of Theorem~\ref{thm::transitiondensity}]
For each $t>0$, we obtain from Lemmas~\ref{lem::halfwatermelon_mart_common} and~\ref{lem::ppf_mart_common} the Radon-Nikodym derivative of $\QQ_{\alpha}(\bs{x})$ against $\QQ_{\shuffle_{2n}}(\bs{x})$ when both measures are restricted to $\LF_t$ and $\{T>t\}$: 
\begin{align*}
\frac{\ud \QQ_{\alpha}(\bs{x})[\cdot \cond_{\LF_t\cap\{T>t\}}]}{\ud \QQ_{\shuffle_{2n}}(\bs{x})[\cdot\cond_{\LF_t}]}=\frac{M_t(\LZ_{\alpha})/M_0(\LZ_{\alpha})}{M_t(\LZ_{\shuffle_{2n}})/M_0(\LZ_{\shuffle_{2n}})}=\frac{\LZ_{\alpha}(\bs{X}_t)/\LZ_{\alpha}(\bs{x})}{\LZ_{\shuffle_{2n}}(\bs{X}_t)/\LZ_{\shuffle_{2n}}(\bs{x})}=\frac{G_{\alpha}(\bs{x})}{G_{\alpha}(\bs{X}_t)}. 
\end{align*}
The transition density for $\bs{X}_t$ under $\QQ_{\shuffle_{2n}}(\bs{x})$ is $\mathsf{Q}_{\shuffle_{2n}}(t; \bs{x}, \bs{y})$, thus the above Radon-Nikodym derivative gives the transition density $\mathsf{Q}_{\alpha}(t; \bs{x}, \bs{y})$ in~\eqref{eqn::transitiondensity_ppf} as desired. 
\end{proof}

\subsection{Proof of Proposition~\ref{prop::cvg_DysonBM}}
\label{subsec::lifetime}

\begin{proof}[Proposition~\ref{prop::cvg_DysonBM}]
We first prove the estimate on the lifetime~\eqref{eqn::multichordal_lifetime}. 
We have
\begin{align*}
	\QQ_{\alpha}(\bs{x})[T>t]=&\int_{\LX_{2n}}\mathsf{Q}_{\alpha}(t; \bs{x}, \bs{y})\ud \bs{y}\\
	=&\int_{\LX_{2n}}\mathsf{Q}_{\shuffle_{2n}}(t; \bs{x}, \bs{y})\frac{G_{\alpha}(\bs{x})}{G_{\alpha}(\bs{y})}\ud \bs{y}\tag{due to~\eqref{eqn::transitiondensity_ppf}}\\
	=&\int_{\LX_{2n}}\mathcal{I}_{\shuffle_{2n}}^{-1} \left(\frac{1}{\sqrt{\kappa t}}\right)^{\Lambda_{2n}}f_{8/\kappa}(\bs{y})\exp\left(-\frac{|\bs{y}|^2}{2\kappa t}\right)\left(1+O\left(\frac{|\bs{x}|}{\sqrt{t}}\right)\right)\frac{G_{\alpha}(\bs{x})}{G_{\alpha}(\bs{y})}\ud \bs{y}\tag{due to~\eqref{eqn::DysonBM_density_asy}}\\
	=&\mathcal{I}_{\shuffle_{2n}}^{-1} \left(\frac{1}{\sqrt{\kappa t}}\right)^{\Lambda_{2n}}G_{\alpha}(\bs{x})\left(1+O\left(\frac{|\bs{x}|}{\sqrt{t}}\right)\right)\int_{\LX_{2n}}f_{8/\kappa}(\bs{y})\exp\left(-\frac{|\bs{y}|^2}{2\kappa t}\right)G_{\alpha}(\bs{y})^{-1}\ud \bs{y}\\
	=&\mathcal{I}_{\shuffle_{2n}}^{-1} \left(\frac{1}{\sqrt{\kappa t}}\right)^{\Lambda_{2n}}G_{\alpha}(\bs{x})\left(1+O\left(\frac{|\bs{x}|}{\sqrt{t}}\right)\right)\LJ_{\alpha}\left(\frac{1}{\sqrt{\kappa t}}\right)^{-\Lambda_{2n}'}
\end{align*}
where the exponent $\Lambda_{2n}'$ is obtained by change of variables in integration: 
\[\Lambda_{2n}'=\frac{n(12n-12+3\kappa)}{\kappa}.\]
Note that 
\[\Lambda_{2n}-\Lambda_{2n}'=\frac{2n(8n-4+\kappa)}{\kappa}-\frac{n(12n-12+3\kappa)}{\kappa}=\frac{n(4n+4-\kappa)}{\kappa}=A_{2n}^+.\]
This completes the proof of~\eqref{eqn::multichordal_lifetime}.

Next, we prove the convergence in total variation distance~\eqref{eqn::cvg_tv}.  For any $t>s>0$, from~\eqref{eqn::multichordal_lifetime}, we have 
\begin{align*}
\frac{\ud \QQ_{\alpha}^{(t)}(\bs{x})[\cdot \cond_{\LF_s}]}{\ud \QQ_{\alpha}(\bs{x})[\cdot \cond_{\LF_s\cap\{T>s\}}]}=\frac{\QQ_{\alpha}(\bs{X}_s)[T>t-s]}{\QQ_{\alpha}(\bs{x})[T>t]}=\left(1-\frac{s}{t}\right)^{-\frac{1}{2}A_{2n}^+}\frac{G_{\alpha}(\bs{X}_s)}{G_{\alpha}(\bs{x})}\left(1+O\left(\frac{|\bs{X}_s|}{\sqrt{t}}\right)\right). 
\end{align*}
Combining with 
\begin{align*}
\frac{\ud \QQ_{\alpha}(\bs{x})[\cdot \cond_{\LF_s\cap\{T>s\}}]}{\ud \QQ_{\shuffle_{2n}}(\bs{x})[\cdot\cond_{\LF_s}]}=\frac{G_{\alpha}(\bs{x})}{G_{\alpha}(\bs{X}_s)}, 
\end{align*}
we conclude that 
\begin{align*}
\frac{\ud \QQ_{\alpha}^{(t)}(\bs{x})[\cdot \cond_{\LF_s}]}{\ud \QQ_{\shuffle_{2n}}(\bs{x})[\cdot\cond_{\LF_s}]}=\left(1-\frac{s}{t}\right)^{-\frac{1}{2}A_{2n}^+}\left(1+O\left(\frac{|\bs{X}_s|}{\sqrt{t}}\right)\right). 
\end{align*}
This implies 
\begin{align*}
\lim_{t\to\infty}\QQ_{\shuffle_{2n}}(\bs{x})\left[\left(1-\frac{\ud \QQ_{\alpha}^{(t)}(\bs{x})[\cdot \cond_{\LF_s}]}{\ud \QQ_{\shuffle_{2n}}(\bs{x})[\cdot\cond_{\LF_s}]}\right)^+\right]=0, 
\end{align*}
which gives the desired convergence in total variation distance~\eqref{eqn::cvg_tv}. 
\end{proof}

\section{Gaussian free field level lines and Dyson Brownian motion $\beta=2$}
\label{sec::GFF_levellines}

We will give a very brief summary on Gaussian free field (GFF) and its level lines, more details can be found in~\cite{SheffieldGFFMath, SchrammSheffieldContinuumGFF, WangWuLevellinesGFFI}. 
Suppose $D\subset\C$ is a domain. The Sobolev space $H_0^1(D)$ is the Hilbert space closure of $C_0^{\infty}(D)$ with respect to the Dirichlet inner product
$(f,g)_{\nabla}=\frac{1}{2\pi}\int \nabla f\cdot \nabla g \; \ud x$.
The zero-boundary GFF $\Gamma$ on $D$ is given by $\Gamma=\sum_n X_n f_n$ where $\{X_n\}$ is a sequence of i.i.d. normal Gaussian random variables and $\{f_n\}$ is an orthonormal basis for $H_0^1(D)$. Such sum does not converge in $H_0^1(D)$, but it does converge in an appropriate space of distributions. The GFF with boundary data $\Gamma_0$ is the sum of the zero-boundary GFF on $D$ and the function in $D$, still denoted by $\Gamma_0$, which is harmonic in $D$ and is equal to $\Gamma_0$ on $\partial D$. It is shown in~\cite{SchrammSheffieldContinuumGFF} that variants of $\SLE_4$ process can be coupled with GFF as ``level lines". In this section, we focus on the coupling between multichordal $\SLE_4$ and GFF. 

We fix $\lambda=\pi/2, n\ge 2$ and fix $\bs{x}=(x_1, \ldots, x_{2n})\in\LX_{2n}$. We consider GFF $\Gamma$ in $\HH$ with the following boundary data:
\begin{equation}\label{eqn::GFF_boundarydata}
	\lambda \text{ on } (x_{2j-1},x_{2j}) \text{ for } 1\leq j\leq n, \qquad -\lambda \text{ on } (x_{2j},x_{2j+1}) \text{ for } 0\leq j\leq n,
\end{equation}
with the convention that $x_0=-\infty$ and $x_{2n+1}=\infty$. For $1\le j\le n$, let $\eta^{2j-1}$ be the level line of $\Gamma$ starting from $x_{2j-1}$ and let $\eta^{2j}$ be the level line of $-\Gamma$ starting from $x_{2j}$. 
We denote the law of $\bs{\eta}=(\eta^1, \ldots, \eta^{2n})$ by $\QQ_{\GFF}(\bs{x})$. 
We parameterize $\bs{\eta}$ by common-time-parameter and denote its driving function by $\bs{X}_t=(X_t^1, \ldots, X_t^{2n})$. Denote by $(\LF_t, t\ge 0)$ the filtration generated by $(\bs{\eta}_t, t\ge 0)$, and by $T=\inf\{t: X_t^j=X_t^k\text{ for some }j\neq k\}$ the lifetime.

We will prove in Lemma~\ref{lem::GFFlevellines_commontime_mart} that, under $\QQ_{\GFF}(\bs{x})$, the driving function $\bs{X}_t$ satisfies the SDE
\begin{equation}\label{eqn::GFFlevellines_SDE}
	\ud X_t^j=2\ud B_t^j+\sum_{k\ne j}\frac{2\left((-1)^{j-k}+1\right)}{X_t^j-X_t^k}\ud t, \qquad 1\leq j\leq 2n,
\end{equation}
where $\{B^j\}_{1\leq j\leq 2n}$ are independent standard Brownian motions. The main conclusion of this section is the following. 

\begin{proposition}\label{prop::GFF_DysonBM}
	Consider level lines $\bs{\eta}=(\eta^1, \ldots, \eta^{2n})$ of the GFF with boundary data~\eqref{eqn::GFF_boundarydata}. 
	\begin{itemize}
		\item Denote by $\mathsf{Q}_{\GFF}(t; \cdot, \cdot)$ the transition density for the solution to~\eqref{eqn::GFFlevellines_SDE}. 
		\item Denote by $\mathsf{Q}_{\shuffle_{2n}}(t; \cdot, \cdot)$ the transition density for Dyson Brownian motion with parameter $\beta=2$: 
		\begin{equation*}\label{eqn::DysonBM_kappa4}
			\ud X_t^j=2\ud B_t^j+\sum_{k\ne j}\frac{4}{X_t^j-X_t^k}\ud t, \quad 1\leq j\leq 2n,
		\end{equation*}
		where $\{B^j\}_{1\leq j\leq 2n}$ are independent standard Brownian motions.
	\end{itemize}
	Then we have 
	\begin{equation}\label{eqn::transitiondensity_GFF}
		\mathsf{Q}_{\GFF}(t; \bs{x}, \bs{y})=\mathsf{Q}_{\shuffle_{2n}}(t; \bs{x}, \bs{y})\frac{G(\bs{x})}{G(\bs{y})},\qquad \text{for all }t\ge 0, \text{ and }\bs{x},\bs{y}\in\LX_{2n},
	\end{equation}
	where 
	\begin{equation}\label{eqn::Greenfunction_GFF}
		G(x_1, \ldots, x_{2n})=\prod_{1\le j<k\le 2n}(x_k-x_j)^{\frac{1}{2}(1-(-1)^{j-k})}. 
	\end{equation}
	Moreover, 
	\begin{equation}\label{eqn::GFF_estimates}
		\QQ_{\GFF}(\bs{x})[T>t]=\LI_{\shuffle_{2n}}^{-1}\LJ_n G(\bs{x})\left(\frac{1}{2\sqrt{t}}\right)^{n^2}\left(1+O\left(\frac{|\bs{x}|}{\sqrt{t}}\right)\right), \quad \text{as } t\to\infty,
	\end{equation}
	where $\LI_{\shuffle_{2n}}$ and $\LJ_n$ are constants: 
	\begin{equation*}
		\LI_{\shuffle_{2n}}=(2\pi)^n\prod_{j=1}^{2n-1}j!,\qquad 
		\LJ_n=\int_{\LX_{2n}}\prod_{1\leq j<k\leq 2n}(x_k-x_j)^{\frac{1}{2}(3+(-1)^{j-k})}\times e^{-\frac{1}{2}|\bs{x}|^2}\ud\bs{x}.
	\end{equation*}
\end{proposition}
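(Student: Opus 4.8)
\textbf{Proof proposal for Proposition~\ref{prop::GFF_DysonBM}.}
The plan is to treat this as the $\kappa=4$ specialization of Theorem~\ref{thm::transitiondensity} together with Proposition~\ref{prop::cvg_DysonBM}, making the abstract objects explicit. The first step is to identify the pure partition function. For GFF level lines with boundary data~\eqref{eqn::GFF_boundarydata}, the coupling of~\cite{SchrammSheffieldContinuumGFF, WangWuLevellinesGFFI} shows that the collection $\bs{\eta}=(\eta^1,\ldots,\eta^{2n})$ is (a time-reversal-symmetrized version of) multichordal $\SLE_4$ associated to the rainbow/nested link pattern determined by the sign pattern of the boundary data; call this link pattern $\alpha$. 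One then checks that the pure partition function $\LZ_\alpha$ for $\kappa=4$ is given in closed form by
\begin{equation}\label{eqn::GFF_partitionfunction}
\LZ_\alpha(x_1,\ldots,x_{2n})=\prod_{\{j,k\}\in\alpha}(x_k-x_j)^{-1/4}\prod_{\substack{1\le j<k\le 2n\\ \{j,k\}\notin\alpha}}(x_k-x_j)^{\pm\,?}
\end{equation}
— more precisely, the known formula $\LZ_\alpha(\bs x)=\prod_{1\le j<k\le 2n}(x_k-x_j)^{(-1)^{j-k}/4}$ (valid for the relevant link pattern at $\kappa=4$; this is where $\mathfrak b=1/4$ enters). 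This is the step I expect to require the most care: one must verify that this explicit product indeed satisfies the BPZ equations~\eqref{eqn::PDE}, the Möbius covariance, the recursive asymptotics, and the power-law bound~\eqref{eqn::PLB_strong}, so that by uniqueness it is the pure partition function for the link pattern carried by the GFF level lines; and one must confirm the GFF level lines realize exactly this link pattern. This verification is a direct but somewhat delicate computation with the quadratic forms $\sum_{j<k}(x_k-x_j)^{c_{jk}}$.

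Granting the identification~\eqref{eqn::GFF_partitionfunction}, the SDE~\eqref{eqn::GFFlevellines_SDE} follows from Lemma~\ref{lem::ppf_mart_common} (stated for general $\kappa\le 4$ and general $\alpha$): with $\kappa=4$ we have $\sqrt\kappa=2$, the drift coefficient $\sum_{\ell\ne j}\frac{2}{X^j-X^\ell}$ from the Loewner interaction, plus $\kappa(\partial_j\log\LZ_\alpha)(\bs X)=4\cdot\sum_{\ell\ne j}\frac{(-1)^{j-\ell}/4}{X^j-X^\ell}=\sum_{\ell\ne j}\frac{(-1)^{j-\ell}}{X^j-X^\ell}$. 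Adding these gives $\sum_{\ell\ne j}\frac{2+(-1)^{j-\ell}}{X^j-X^\ell}$; one rewrites $2+(-1)^{j-\ell}$ as $2\bigl((-1)^{j-\ell}+1\bigr)$ when $j-\ell$ is even and as $1$ when odd, which is exactly the coefficient $2\bigl((-1)^{j-k}+1\bigr)$ in~\eqref{eqn::GFFlevellines_SDE} (the odd terms being absent). This is recorded as Lemma~\ref{lem::GFFlevellines_commontime_mart} and is a routine substitution once~\eqref{eqn::GFF_partitionfunction} is in hand.

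Next, the transition-density identity~\eqref{eqn::transitiondensity_GFF} is immediate from Theorem~\ref{thm::transitiondensity}: with $\kappa=4$, the half-watermelon partition function is $\LZ_{\shuffle_{2n}}(\bs x)=\prod_{i<j}(x_j-x_i)^{1/2}$, so the Green's function $G_\alpha=\LZ_{\shuffle_{2n}}/\LZ_\alpha$ equals $\prod_{1\le j<k\le 2n}(x_k-x_j)^{1/2-(-1)^{j-k}/4-(-1)^{j-k}/4}$? — one computes $\tfrac12-\tfrac{(-1)^{j-k}}{4}$... let me instead present it cleanly: since $\LZ_{\shuffle_{2n}}$ has exponent $1/2$ and $\LZ_\alpha$ has exponent $(-1)^{j-k}/4$ on the factor $(x_k-x_j)$, we would get exponent $1/2-(-1)^{j-k}/4$, which does not match~\eqref{eqn::Greenfunction_GFF}; this forces the correct exponent in~\eqref{eqn::GFF_partitionfunction} to be such that $G_\alpha(\bs x)=\prod_{j<k}(x_k-x_j)^{\frac12(1-(-1)^{j-k})}$, i.e.\ $\LZ_\alpha$ must carry exponent $\tfrac12-\tfrac12\bigl(1-(-1)^{j-k}\bigr)=\tfrac12(-1)^{j-k}$ wait — so the partition-function exponent is $\tfrac{(-1)^{j-k}+1}{2}-\tfrac12=\tfrac{(-1)^{j-k}}{2}$; I will pin this down in the first step by requiring consistency with both~\eqref{eqn::GFFlevellines_SDE} and~\eqref{eqn::Greenfunction_GFF}. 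Finally, the tail estimate~\eqref{eqn::GFF_estimates} is the $\kappa=4$ instance of~\eqref{eqn::multichordal_lifetime}: one substitutes $\kappa=4$ into $A_{2n}^+=\frac{n(4n+4-\kappa)}{\kappa}=\frac{n\cdot 4n}{4}=n^2$, into $\mathcal I_{\shuffle_{2n}}$ (the Mehta-type integral $\int_{\LX_{2n}}\prod_{i<j}(x_j-x_i)^{2}e^{-|\bs x|^2/2}\,d\bs x$, which by the Selberg/Mehta integral equals $(2\pi)^n\prod_{j=1}^{2n-1}j!$, giving $\LI_{\shuffle_{2n}}$), and identifies $\LJ_\alpha$ with $\LJ_n$ via $f_{8/\kappa}G_\alpha^{-1}=f_2\cdot\prod_{j<k}(x_k-x_j)^{-\frac12(1-(-1)^{j-k})}=\prod_{j<k}(x_k-x_j)^{2-\frac12+\frac12(-1)^{j-k}}=\prod_{j<k}(x_k-x_j)^{\frac12(3+(-1)^{j-k})}$, matching the stated $\LJ_n$; the factor $\left(\tfrac{1}{\sqrt{\kappa t}}\right)^{A_{2n}^+}=\left(\tfrac1{2\sqrt t}\right)^{n^2}$ reproduces the stated rate. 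The only genuine obstacle is the first step (identifying $\LZ_\alpha$ and the link pattern); everything after that is specialization and the evaluation of one Selberg-type integral.
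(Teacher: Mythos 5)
Your proposal runs into a genuine conceptual error at its first and most essential step, and the rest of the argument cannot be repaired without abandoning that step. You assert that the GFF level lines $\bs\eta=(\eta^1,\ldots,\eta^{2n})$ ``realize exactly'' a single link pattern $\alpha$ (the ``rainbow/nested'' one determined by the sign pattern of the boundary data), so that $\QQ_{\GFF}(\bs x)=\QQ_\alpha(\bs x)$ and the task reduces to identifying $\LZ_\alpha$ as the explicit product $\prod_{j<k}(x_k-x_j)^{(-1)^{j-k}/2}$. This is false: with alternating boundary data~\eqref{eqn::GFF_boundarydata}, the pairing of level lines is a \emph{random} planar link pattern determined by the realization of the field, and $\QQ_{\GFF}(\bs x)$ is the mixture
\[
\QQ_{\GFF}(\bs x)=\sum_{\alpha\in\LP_n}\frac{\LZ_\alpha(\bs x)}{\LZ_{\GFF}^{(n)}(\bs x)}\,\QQ_\alpha(\bs x),
\qquad\text{with }\quad \LZ_{\GFF}^{(n)}(\bs x)=\sum_{\alpha\in\LP_n}\LZ_\alpha(\bs x),
\]
by~\cite[Theorem~1.4]{PeltolaWuGlobalMultipleSLEs}. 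In particular $\LZ_{\GFF}^{(n)}$ is \emph{not} a pure partition function for any $\alpha$: it satisfies the BPZ equations and the M\"obius covariance (both linear conditions), but it necessarily violates the recursive asymptotics that single out a pure partition function --- if $\{j,j+1\}\notin\alpha$ one should get limit $0$, whereas the contributions from every $\beta\in\LP_n$ with $\{j,j+1\}\in\beta$ survive in the sum. So the verification programme you sketch in your ``first step'' is guaranteed to fail, not merely to be delicate.

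Because of this, Proposition~\ref{prop::GFF_DysonBM} is not literally a specialization of Theorem~\ref{thm::transitiondensity} (which is stated for a single $\alpha$). The extra ingredient the paper supplies is Lemma~\ref{lem::GFFlevellines_commontime_mart}: one observes that the Radon--Nikodym derivative $\ud\QQ_{\GFF}/\ud\PP_{2n}$ restricted to $\LF_t\cap\{T>t\}$ is the corresponding \emph{sum} of $M_t(\LZ_\alpha)/M_0(\LZ_\alpha)$ weighted by $\LZ_\alpha(\bs x)/\LZ_{\GFF}^{(n)}(\bs x)$, which collapses by linearity to $M_t(\LZ_{\GFF}^{(n)})/M_0(\LZ_{\GFF}^{(n)})$; this only uses that $\LZ_{\GFF}^{(n)}$ solves the BPZ system (Proposition~\ref{prop::multitime_mart} never required pure-ness), together with the connection-probability identity above. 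After that, the Radon--Nikodym derivative of $\QQ_{\GFF}$ against $\QQ_{\shuffle_{2n}}$ is $G(\bs x)/G(\bs X_t)$ as in the proof of Theorem~\ref{thm::transitiondensity}, and your remaining arithmetic --- $A_{2n}^+=n^2$, $\LZ_{\shuffle_{2n}}$ with exponent $1/2$, $\LI_{\shuffle_{2n}}=(2\pi)^n\prod_{j=1}^{2n-1}j!$, the exponent $\tfrac12(3+(-1)^{j-k})$ in $\LJ_n$, the rate $(1/(2\sqrt t))^{n^2}$ --- is all correct once the partition-function exponent is fixed at $\tfrac12(-1)^{j-k}$ (your initial guess $\tfrac14(-1)^{j-k}$, conflating the exponent with $\mathfrak b$, is wrong, as you yourself notice when the drift and Green's function refuse to match; do note that deducing the exponent by ``requiring consistency with the stated results'' is circular as a proof). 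The clean fix is to replace the ``identify the link pattern $\alpha$'' step with ``cite~\cite[Theorem~1.4]{PeltolaWuGlobalMultipleSLEs} for $\LZ_{\GFF}^{(n)}=\sum_\alpha\LZ_\alpha$ and the mixture decomposition of $\QQ_{\GFF}$,'' and then run the RN-derivative / local-martingale argument directly for $\LZ_{\GFF}^{(n)}$.
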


The proof for Proposition~\ref{prop::GFF_DysonBM} relies on Theorem~\ref{thm::transitiondensity} and the explicit partition function for GFF~\cite{PeltolaWuGlobalMultipleSLEs}: 
\begin{equation}\label{eqn::GFF_partitionfunction}
	\LZ_{\GFF}^{(n)}(x_1, \ldots, x_{2n})=\prod_{1\le j<k\le 2n}(x_k-x_j)^{\frac{1}{2}(-1)^{j-k}}.
\end{equation}
Note that $\QQ_{\GFF}(\bs{x})$ does not correspond to a single $\QQ_{\alpha}(\bs{x})$ for any $\alpha\in\LP_n$. Instead, it is a linear combination of all of them. We consider pure partition functions $\{\LZ_{\alpha}(\bs{x}), \alpha\in\LP_n\}$ and multichordal SLE measures $\{\QQ_{\alpha}(\bs{x}): \alpha\in\LP_n\}$ in Section~\ref{subsec::pre_NSLE} with $\kappa=4$. 
From~\cite[Theorem~1.4]{PeltolaWuGlobalMultipleSLEs}, we have
	\begin{equation}\label{eqn::GFF_total_pure}
		\LZ_{\GFF}^{(n)}(\bs{x})=\sum_{\alpha\in\LP_n}\PartF_{\alpha}(\bs{x}),
	\end{equation}
	and
	\begin{equation}\label{eqn::connectprob}
		\QQ_{\GFF}(\bs{x})=\sum_{\alpha\in\LP_n}\frac{\PartF_{\alpha}(\bs{x})}{\LZ_{\GFF}^{(n)}(\bs{x})}\QQ_{\alpha}(\bs{x}).
	\end{equation}

\begin{lemma}\label{lem::GFFlevellines_commontime_mart}
Fix $\kappa=4, n\ge 1$ and $\bs{x}=(x_1, \ldots, x_{2n})\in\LX_{2n}$. We consider the relation between the following two measures.
\begin{itemize}
\item For each $j\in\{1, \ldots, 2n\}$, let $\eta^j$ be $\SLE_{4}$ in $(\HH; x_j,\infty)$ and let $\PP_{2n}$ be the probability measure on $\bs{\eta}=(\eta^1, \ldots, \eta^{2n})$ under which the curves are independent.
\item Denote by $\QQ_{\GFF}({\bs{x}})$ the law of level lines $\bs{\eta}=(\eta^1, \ldots, \eta^{2n})$ of GFF $\Gamma$ with boundary data~\eqref{eqn::GFF_boundarydata}.  
\end{itemize}
We parameterize $\bs{\eta}$ by common-time-parameter $t$ by let $\bs{X}_t$ denote the driving function as in~\eqref{eqn::commontime_driving} with $p=2n$. The Radon-Nikodym derivative between $\QQ_{\GFF}(\bs{x})$ and $\PP_{2n}$ when both measures are restricted to $\LF_t$ and $\{T>t\}$ is given by 
\begin{equation}
\frac{\ud \QQ_{\GFF}(\bs{x})[\cdot\cond_{\LF_t\cap\{T>t\}}]}{\ud \PP_{2n}[\cdot\cond_{\LF_t\cap\{T>t\}}]}=\frac{M_t(\LZ_{\GFF}^{(n)})}{M_0(\LZ_{\GFF}^{(n)})}, 
\end{equation}
where $\LZ_{\GFF}^{(n)}$ is the partition function~\eqref{eqn::GFF_partitionfunction} and 
\begin{equation}
M_t(\LZ_{\GFF}^{(n)})=\prod_{j=1}^{2n}g_{t,j}'(W^j_t)^{1/4}\times \exp\left(\frac{1}{2}\blm_t\right)\times\LZ_{\GFF}^{(n)}(\bs{X}_t),
\end{equation}
and $\blm_t$ is given by\[	\blm_{t} = \blm \left(\HH;\eta_{[0,t]}^{1},\ldots,\eta_{[0,t]}^{2n} \right).
\]
 Moreover, under $\QQ_{\GFF}(\bs{x})$, the driving function $\bs{X}_t$ satisfies the system of SDEs~\eqref{eqn::GFFlevellines_SDE}. 
\end{lemma}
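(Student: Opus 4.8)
The plan is to mirror the proof of Lemma~\ref{lem::ppf_mart_common}, feeding in the pure-partition-function decompositions~\eqref{eqn::GFF_total_pure} and~\eqref{eqn::connectprob}. First I would record that at $\kappa=4$ the parameters of Proposition~\ref{prop::multitime_mart} are $\mathfrak{b}=1/4$ and $\mathfrak{c}=1$, and that, since the system of BPZ equations~\eqref{eqn::PDE} is linear and each $\PartF_\alpha$ solves it, the identity $\LZ_{\GFF}^{(n)}=\sum_{\alpha\in\LP_n}\PartF_\alpha$ shows $\LZ_{\GFF}^{(n)}$ solves~\eqref{eqn::BPZ_general} as well. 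Proposition~\ref{prop::multitime_mart} then gives that $M_{\bs{t}}(\LZ_{\GFF}^{(n)})$ is a $2n$-time-parameter local martingale under $\PP_{2n}$, and passing to the common-time-parameter exactly as in Lemmas~\ref{lem::halfwatermelon_mart_common} and~\ref{lem::ppf_mart_common} produces the process $M_t(\LZ_{\GFF}^{(n)})$ in the statement (loop factor $\exp(\blm_t/2)$ since $\mathfrak{c}=1$; the indicator $\one_{\LE_\emptyset(\bs{\eta}_t)}$ absorbed into the restriction to $\{T>t\}$).

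Next I would identify the measure obtained by weighting $\PP_{2n}$, restricted to $\LF_t\cap\{T>t\}$, by $M_t(\LZ_{\GFF}^{(n)})/M_0(\LZ_{\GFF}^{(n)})$. The point is that $M_t(\LZ)$ is linear in $\LZ$, since the factors $\prod_j g_{t,j}'(W^j_t)^{\mathfrak{b}}$ and $\exp(\mathfrak{c}\,\blm_t/2)$ in~\eqref{eqn::multitime_mart_general} do not involve $\LZ$, and that $M_0(\LZ)=\LZ(\bs{x})$ because $g_{0,j}'\equiv 1$ and $\blm_0=0$; together with $\LZ_\alpha=\PartF_\alpha$ this gives
\[
\frac{M_t(\LZ_{\GFF}^{(n)})}{M_0(\LZ_{\GFF}^{(n)})}=\sum_{\alpha\in\LP_n}\frac{\PartF_\alpha(\bs{x})}{\LZ_{\GFF}^{(n)}(\bs{x})}\,\frac{M_t(\LZ_\alpha)}{M_0(\LZ_\alpha)}.
\]
By Lemma~\ref{lem::ppf_mart_common} each $M_t(\LZ_\alpha)/M_0(\LZ_\alpha)$ is the Radon--Nikodym derivative of $\QQ_\alpha(\bs{x})$ against $\PP_{2n}$ on $\LF_t\cap\{T>t\}$, so the weighted measure is $\sum_{\alpha}\PartF_\alpha(\bs{x})\LZ_{\GFF}^{(n)}(\bs{x})^{-1}\,\QQ_\alpha(\bs{x})$, which equals $\QQ_{\GFF}(\bs{x})$ by~\eqref{eqn::connectprob} (the coefficients summing to $1$ by~\eqref{eqn::GFF_total_pure}). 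This is the asserted Radon--Nikodym formula.

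For the SDE~\eqref{eqn::GFFlevellines_SDE} I would apply Girsanov under the weighting by $M_t(\LZ_{\GFF}^{(n)})$ --- the computation is the one already carried out in Lemma~\ref{lem::ppf_mart_common}, which uses only that the weight has the form~\eqref{eqn::multitime_mart_general} with $\LZ$ solving~\eqref{eqn::BPZ_general} --- to get that under $\QQ_{\GFF}(\bs{x})$ the driving function solves~\eqref{eqn::SDE_ppf} with $\LZ=\LZ_{\GFF}^{(n)}$ and $\kappa=4$. Differentiating~\eqref{eqn::GFF_partitionfunction} gives $\partial_j\log\LZ_{\GFF}^{(n)}(\bs{x})=\sum_{k\neq j}\frac{(-1)^{j-k}}{2(x_j-x_k)}$, and substituting $\kappa=4$ collapses the drift to $\sum_{k\neq j}\frac{2((-1)^{j-k}+1)}{X_t^j-X_t^k}$, which is~\eqref{eqn::GFFlevellines_SDE}. (As a consistency check, one may instead average~\eqref{eqn::SDE_ppf} over $\alpha$ with the $\LF_t$-martingale weights $\PartF_\alpha(\bs{X}_t)\LZ_{\GFF}^{(n)}(\bs{X}_t)^{-1}$ and use $\sum_{\alpha}\PartF_\alpha\,\partial_j\log\PartF_\alpha=\partial_j\LZ_{\GFF}^{(n)}$.)

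The one point that needs genuine care is the passage to the common-time-parameter for a \emph{mixture} of laws: since the time change of Lemma~\ref{lem::commontime_existence} is a deterministic functional of the curve configuration, it is the same under $\QQ_{\GFF}(\bs{x})$ and under every $\QQ_\alpha(\bs{x})$, so the decomposition~\eqref{eqn::connectprob} survives it; and along this parameterization the disjointness event $\LE_\emptyset(\bs{\eta}_t)$ coincides with $\{T>t\}$, justifying the removal of the indicator, exactly as in Lemma~\ref{lem::ppf_mart_common}. I do not expect any real obstacle beyond these bookkeeping points --- the lemma is essentially Proposition~\ref{prop::multitime_mart} plus Lemma~\ref{lem::ppf_mart_common} combined linearly through~\eqref{eqn::GFF_total_pure}--\eqref{eqn::connectprob}.
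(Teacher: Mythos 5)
Your proposal is correct and follows essentially the same route as the paper: decompose via the pure partition functions using~\eqref{eqn::GFF_total_pure}--\eqref{eqn::connectprob}, invoke Lemma~\ref{lem::ppf_mart_common} (at $\kappa=4$, $\mathfrak{b}=1/4$, $\mathfrak{c}=1$) together with the linearity of $\LZ\mapsto M_t(\LZ)$ to get the Radon--Nikodym formula, and then obtain~\eqref{eqn::GFFlevellines_SDE} from the Girsanov computation with $\LZ=\LZ_{\GFF}^{(n)}$ (which solves the BPZ system by linearity) and the explicit form of $\partial_j\log\LZ_{\GFF}^{(n)}$. The only difference is cosmetic ordering: the paper decomposes the Radon--Nikodym derivative first and resums, while you first establish the martingale for $\LZ_{\GFF}^{(n)}$ and then identify the mixture.
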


\begin{proof}
From~\eqref{eqn::GFF_total_pure} and~\eqref{eqn::connectprob}, we have
	\begin{align*}
		\frac{\ud\QQ_{\GFF}(\bs{x})[\cdot\cond_{\LF_t\cap\{T>t\}}]}{\ud\PP_{2n}[\cdot\cond_{\LF_t\cap\{T>t\}}]}
		=&\sum_{\alpha\in\LP_n}\frac{\PartF_{\alpha}(\bs{x})}{\LZ_{\GFF}^{(n)}(\bs{x})}\times
		\frac{\ud\QQ_{\alpha}(\bs{x})[\cdot\cond_{\LF_t\cap\{T>t\}}]}{\ud\PP_{2n}[\cdot\cond_{\LF_t\cap\{T>t\}}]}\tag{due to~\eqref{eqn::connectprob}}\\
		=&\sum_{\alpha\in\LP_n}\frac{\PartF_{\alpha}(\bs{x})}{\LZ_{\GFF}^{(n)}(\bs{x})}\times\frac{M_{t}(\LZ_{\alpha})}{M_0(\LZ_{\alpha})}\tag{due to Lemma~\ref{lem::ppf_mart_common} with $\kappa=4$}\\
		=&\frac{M_t(\LZ_{\GFF}^{(n)})}{M_0(\LZ_{\GFF}^{(n)})}. \tag{due to~\eqref{eqn::GFF_total_pure}}
	\end{align*} 
	Note that $\LZ_{\GFF}^{(n)}$ is a combination of $\PartF_{\alpha} $ as in~\eqref{eqn::GFF_total_pure} and hence it also satisfies the system of PDEs~\eqref{eqn::PDE} with $\kappa=4$. Then similar calculation as in the proof of Lemma~\ref{lem::ppf_mart_common} implies that, under $\QQ_{\GFF}(\bs{x})$, the driving function $\bs{X}_t$ satisfies the system of SDEs: 
	\begin{equation*}
		\ud X_t^j=2\ud B_t^j+4(\partial_j\log\LZ_{\GFF}^{(n)})(X_t^1,\ldots,X_t^{2n})\ud t+\sum_{k\ne j}\frac{2}{X_t^j-X_t^k}\ud t, \quad 1\leq j\leq 2n,
	\end{equation*}
	where $\{B^j\}_{1\leq j\leq 2n}$ are independent standard Brownian motions. From the explicit expression~\eqref{eqn::GFF_partitionfunction}, this is the same as SDE~\eqref{eqn::GFFlevellines_SDE} and we complete the proof.
\end{proof}

\begin{proof}[Proof of Proposition~\ref{prop::GFF_DysonBM}]
	We fix $\kappa=4$. From Lemma~\ref{lem::halfwatermelon_mart_common} with $p=2n$ and $\kappa=4$ and Lemma~\ref{lem::GFFlevellines_commontime_mart}, the Radon-Nikodym derivative of $\QQ_{\GFF}(\bs{x})$ against $\QQ_{\shuffle_{2n}}(\bs{x})$ when both measures are restricted to $\LF_t$ and $\{T>t\}$ is given by : 
	\begin{equation*}
		\frac{\ud\QQ_{\GFF}(\bs{x})[\cdot|_{\LF_t\cap\{T>t\}}]}{\ud\QQ_{\shuffle_{2n}}(\bs{x})[\cdot|_{\LF_t}]}
		=\frac{M_t(\LZ_{\GFF}^{(n)})/M_0(\LZ_{\GFF}^{(n)})}{M_t(\LZ_{\shuffle_{2n}})/M_0(\LZ_{\shuffle_{2n}})}
		=\frac{\LZ_{\GFF}^{(n)}(\bs{X}_t)/\LZ_{\GFF}^{(n)}(\bs{x})}{\LZ_{\shuffle_{2n}}(\bs{X}_t)/\LZ_{\shuffle_{2n}}(\bs{x})}=\frac{G(\bs{x})}{G(\bs{X}_t)},
	\end{equation*}
	where
	\begin{equation*}
		G(\bs{x})=\LZ_{\shuffle_{2n}}(\bs{x})/\LZ_{\GFF}^{(n)}(\bs{x})=\prod_{1\leq j<k\leq 2n}(x_k-x_j)^{\frac{1}{2}(1-(-1)^{j-k})}.
	\end{equation*}
	Since $\bs{X}_t$ is a Markov process with transition density $\mathsf{Q}_{\shuffle_{2n}}(t;\bs{x},\bs{y})$ under $\QQ_{\shuffle_{2n}}(\bs{x})$, the Radon-Nikodym derivative gives the transition density $\mathsf{Q}_{\GFF}(t;\bs{x},\bs{y})$ as desired in~\eqref{eqn::transitiondensity_GFF}. 
	
	Lemma~\ref{lem::density*} gives the asymptotic of transition density for Dyson Brownian motion with parameter $\beta=2$:
	\begin{align}\label{eqn::DysonBM_asy_beta2}
	\mathsf{Q}_{\shuffle_{2n}}(t;\bs{x},\bs{y})=\LI_{\shuffle_{2n}}^{-1}(4t)^{-2n^2}f_2(\bs{y})\exp\left(-\frac{|\bs{y}|^2}{8t}\right)\left(1+O\left(\frac{|\bs{x}|}{\sqrt{t}}\right)\right), \quad \text{as } t\to\infty,
	\end{align}
	where (see~\cite[Eq.~(17.6.7)]{MehtaRandomMatrices})
	\begin{align*}
		\LI_{\shuffle_{2n}}=\int_{\LX_{2n}}f_{2}(\bs{x})e^{-\frac{1}{2}|\bs{x}|^2}\ud\bs{x}=\frac{1}{(2n)!}\int_{\R^{2n}}|f_{2}(\bs{x})|e^{-\frac{1}{2}|\bs{x}|^2}\ud\bs{x}=(2\pi)^n\prod_{j=1}^{2n-1}j!. 
	\end{align*}
	Then we obtain 
	\begin{align*}
		\QQ_{\GFF}(\bs{x})[T>t]
		&=\int_{\LX_{2n}}\mathsf{Q}_{\GFF}(t;\bs{x},\bs{y})\ud\bs{y}\\=&\int_{\LX_{2n}}\mathsf{Q}_{\shuffle_{2n}}(t;\bs{x},\bs{y})\frac{G(\bs{x})}{G(\bs{y})}\ud\bs{y}\tag{due to~\eqref{eqn::transitiondensity_GFF}}\\
		&=\LI_{\shuffle_{2n}}^{-1}G(\bs{x})(4t)^{-2n^2}\left(1+O\left(\frac{|\bs{x}|}{\sqrt{t}}\right)\right)\int_{\LX_{2n}}\frac{f_2(\bs{y})}{G(\bs{y})}\exp\left(-\frac{|\bs{y}|^2}{8t}\right)\ud\bs{y}\tag{due to~\eqref{eqn::DysonBM_asy_beta2}}\\
		&=\LI_{\shuffle_{2n}}^{-1}G(\bs{x})(4t)^{-2n^2}\left(1+O\left(\frac{|\bs{x}|}{\sqrt{t}}\right)\right)  \LJ_n (4t)^{3n^2/2} \\
		&=\LI_{\shuffle_{2n}}^{-1}\LJ_n G(\bs{x})(4t)^{-n^2/2}\left(1+O\left(\frac{|\bs{x}|}{\sqrt{t}}\right)\right), \quad \text{as } t\to\infty, 
	\end{align*}
	where
	\begin{align*}
		\LJ_n=\int_{\LX_{2n}}\frac{f_2(\bs{x})}{G(\bs{x})}e^{-\frac{1}{2}|\bs{x}|^2}\ud\bs{x}=\int_{\LX_{2n}}\prod_{1\leq j<k\leq 2N}(x_k-x_j)^{\frac{1}{2}(3+(-1)^{j-k})}e^{-\frac{1}{2}|\bs{x}|^2}\ud\bs{x}. 
	\end{align*}	
	This completes the proof of~\eqref{eqn::GFF_estimates}. 
	\end{proof}

\section{Proof of Theorem~\ref{thm::multichordalSLE_boundaryarm}}
\label{sec::NSLE_estimate}

The size ``$\mathrm{hsiz}$" is in general defined for compact hulls. Suppose $K$ is an $\HH$-hull and define $\mathrm{hsiz}(K)$ to be the two-dimensional Lebesgue measure of the union of all balls centered by points in $K$ that are tangent to the real line, i.e.
\begin{equation*}
\mathrm{hsiz}(K)=\mathrm{area}\left(\cup_{u+\ii v\in K} B_v(u+\ii v)\right). 
\end{equation*} 
For a curve $\gamma\in \chamber(\HH; x, y)$, the domain surrounded by $\gamma$ and the real line is an $\HH$-hull, then $\mathrm{hsiz}$ of this compact hull is the same as we defined in~\eqref{eqn::hsiz_def} in the introduction. The size $\mathrm{hsiz}$ is comparable to half-plane capacity\footnote{We define $\mathrm{hcap}(K)=\lim_{z\to\infty} z (g_K(z)-z)/2$ while the half plane capacity was defined as $\mathrm{hcap}(K)=\lim_{z\to\infty} z (g_K(z)-z)$ in~\cite{LLN09}.}~\cite[Theorem~1]{LLN09}: 
\begin{equation}\label{eqn::hsiz_hcap}
\frac{1}{132}\mathrm{hsiz}(K)< \mathrm{hcap}(K)<\frac{7}{4\pi}\mathrm{hsiz}(K). 
\end{equation}

\begin{proof}[Proof of Theorem~\ref{thm::multichordalSLE_boundaryarm}]
We have
\begin{align*}
	\QQ_{\alpha}(\bs{x})\left[\mathrm{hsiz}(\gamma^j)>R^2,\forall 1\le j\le n\right] \le &\QQ_{\alpha}(\bs{x})\left[\mathrm{hcap}(\gamma^j)>R^2/132,\forall 1\le j\le n\right] \tag{due to~\eqref{eqn::hsiz_hcap}}\\
	\le & \QQ_{\alpha}(\bs{x})\left[T>R^2/(528n)\right],\tag{due to~\eqref{eqn::commontime_control}}\\
	\QQ_{\alpha}(\bs{x})\left[\mathrm{hsiz}(\gamma^j)>R^2,\forall 1\le j\le n\right] \ge &\QQ_{\alpha}(\bs{x})\left[\mathrm{hcap}(\gamma^j)> 7R^2/(4\pi),\forall 1\le j\le n\right] \tag{due to~\eqref{eqn::hsiz_hcap}}\\
	\ge &\QQ_{\alpha}(\bs{x})\left[T> 7R^2/(4\pi)\right]. \tag{due to~\eqref{eqn::commontime_control}}
\end{align*}
Combining with~\eqref{eqn::multichordal_lifetime}, we obtain the desired result.
\end{proof}

Let us discuss the relation between Theorem~\ref{thm::multichordalSLE_boundaryarm} and~\cite{LawlerMinkowskiSLERealLine, ZhanGreen2SLEboundary}. For $R>0$, denote by $B_R$ the ball centered at origin with radius $R$. We expect the following conclusion holds:\footnote{For two functions $F_1(\bs{x}; R)$ and $F_2(\bs{x}; R)$, we write $F_1(\bs{x}; R)\sim F_2(\bs{x}; R)$ as $R\to \infty$ if the limit $\lim_{R\to\infty}\frac{F_1(\bs{x}; R)}{F_2(\bs{x}; R)}$ exists and takes value in $(0,\infty)$. }
\begin{equation}\label{eqn::Greenfunction_asymp_stronger}
\QQ_{\alpha}(\bs{x})\left[\gamma^j\cap\partial B_R\neq \emptyset, \forall 1\le j\le n\right]\sim G_{\alpha}(\bs{x})R^{-A_{2n}^+}, \qquad \text{as }R\to\infty
\end{equation}
This stronger estimate~\eqref{eqn::Greenfunction_asymp_stronger} is proved for $\kappa\in(0,8), n=1$ in~\cite[Theorem~1]{LawlerMinkowskiSLERealLine} and is proved for $\kappa\in(0,8), n=2$ in~\cite[Theorem~1.1]{ZhanGreen2SLEboundary}. 
For the general case $n\ge 3$, it is easy to get the estimate on one-side: 
\begin{align}\label{eqn::multichordal_ball}
\QQ_{\alpha}(\bs{x})\left[\gamma^j\cap\partial B_R\neq \emptyset, \forall 1\le j\le n\right]\gtrsim G_{\alpha}(\bs{x})R^{-A_{2n}^+}; 
\end{align}
because $\mathrm{hcap}(\gamma^j)> R^2/2$ implies $\gamma^j\cap\partial B_R\neq \emptyset$. However, there is still gap between~\eqref{eqn::multichordal_ball} and~\eqref{eqn::Greenfunction_asymp_stronger} that we do not know how to address. 

{\small

	}
\end{document}